\theoremstyle{plain}  
\newtheorem{theorem}{Theorem}[section]
\newtheorem{lem}[theorem]{Lemma}
\newtheorem{cor}[theorem]{Corollary}
\newtheorem{defn}[theorem]{Definition}
\theoremstyle{definition}
\theoremstyle{remark}
\newtheorem{example}{Example}[section]
\newtheorem{remark}{Remark}[section]
\def\0{{\bf 0}}
\def\1{{\bf 1}}
\def \Snn{ \mathbb{S}^{n \times n}}
\def \nulls{ \mathcal{N}}
\def \cols{ \mathcal{C}}
\def \bmat{\left[\begin{matrix}}
\def \emat{\end{matrix}\right]}
\def \bvec{\left(\begin{matrix}}
\def \evec{\end{matrix}\right)}
\def \xy1vec{\left[\begin{matrix}x\\y\\1\end{matrix}\right]}
\def \QED{\begin{flushright}\Halmos\end{flushright}\end{proof}}
\def \defeq{\mathrel{\mathop{:}}=}
\def \Tr{\mathrm{Tr}}
\def \xbar{\bar{x}}
\def \ybar{\bar{y}}
\def \grad{\nabla}
\def \gx{\grad p(x)}
\def \Hx{\Hess p(x)}
\def \gxb{\grad p(\xbar)}
\def \Hxb{\Hess p(\xbar)}
\def \gp3d{\grad p_3(d)}
\def \Hess{\nabla^2}
\def \R{\mathbb{R}}
\def \Rn{\R^n}
\def \beq{\begin{equation}}
\def \eeq{\end{equation}}
\def \baeq{\begin{equation*}\begin{aligned}}
\def \eaeq{\end{aligned}\end{equation*}}
\newcommand{\baeql}[1]{\begin{equation}\label{#1}\begin{aligned}}
\def \eaeql{\end{aligned}\end{equation}}
\def \otn{\{1, \ldots, n\}}
\long\def\edit#1{{\color{black}#1}}
\title{\LARGE \bf Complexity Aspects of Local Minima\\ and Related Notions\thanks{This work was supported partially by an AFOSR MURI award, the DARPA Young Faculty Award, the Princeton SEAS Innovation Award, the NSF CAREER Award, the Google Faculty Award, and the Sloan Fellowship.}}
\author{Amir Ali Ahmadi\thanks{Department of Operations Research and Financial Engineering, Princeton University; \texttt{aaa@princeton.edu}}\hspace{1mm} and Jeffrey Zhang\thanks{Department of Mathematical Sciences, Carnegie Mellon University; \texttt{jeffz@cmu.edu}}}
\begin{document}
\date{}
\maketitle
\vspace{-1.8em}
\begin{abstract}
	\noindent 
	We consider the notions of (i) critical points,  (ii) second-order points,  (iii) local minima, and (iv) strict local minima for multivariate polynomials. For each type of point, and as a function of the degree of the polynomial, we study the complexity of deciding (1) if a given point is of that type, and (2) if a polynomial has a point of that type. Our results characterize the complexity of these two questions for all degrees left open by prior literature. Our main contributions reveal that many of these questions turn out to be tractable for cubic polynomials. In particular, we present an efficiently-checkable necessary and sufficient condition for local minimality of a point for a cubic polynomial. We also show that a local minimum of a cubic polynomial can be efficiently found by solving semidefinite programs of size linear in the number of variables. By contrast, we show that it is strongly NP-hard to decide if a cubic polynomial has a critical point. We also prove that the set of second-order points of any cubic polynomial is a spectrahedron, and conversely that any spectrahedron is the projection of the set of second-order points of a cubic polynomial. In our final section, we briefly present a potential application of finding local minima of cubic polynomials to the design of a third-order Newton method.

\end{abstract}
\paragraph{Keywords:} {\small Local minima, critical and second-order points, computational complexity, polynomial optimization, sum of squares polynomials, semidefinite programming.}
\section{Introduction}\label{Sec: Introduction}

We are concerned in this paper with algorithmic questions around the following four types of points associated with a sufficiently smooth function $f:\Rn \to \R$:
\begin{enumerate}[(i)]
    \item a \emph{critical point}, i.e., a point $x$ where the gradient $\grad f(x)$ is zero,
    \item a \emph{second-order point}, i.e., a point $x$ where $\grad f(x) = 0$ and the Hessian $\Hess f(x)$ is positive semidefinite (psd), i.e. has nonnegative eigenvalues,
    \item a \emph{local minimum}, i.e., a point $x$ for which there exists a scalar $\epsilon > 0$ such that $f(x) \le f(y)$ for all $y$ with $\|y - x\| \le \epsilon$,
    \item a \emph{strict local minimum}, i.e., a point $x$ for which there exists a scalar $\epsilon > 0$ such that $f(x)<f(y)$ for all $y \ne x$ with $\|y - x\| \le \epsilon$.
\end{enumerate}
We note the following straightforward implications between (i)-(iv):
\begin{center}
    strict local minimum $\Rightarrow$ local minimum $\Rightarrow$ second-order point $\Rightarrow$ critical point.
\end{center}

Notions (i)-(iv) appear ubiquitously in nonconvex continuous optimization as surrogates for global minima. This is because it is well understood that finding a global minimum of $f$ is in general an intractable problem.

In this paper, with regard to each of the four notions above, we study the complexity of answering the following questions:
\begin{enumerate}[{\bf Q1:}]
    \item Given a function $f: \Rn \to \R$ and a point $x \in \Rn$, is $x$ of a given type (i)-(iv)?
    \item Given a function $f: \Rn \to \R$, does $f$ have a point of a given type (i)-(iv) (and if so, can one be found efficiently)?
\end{enumerate}

Note that a priori there are no complexity implications between these two questions. For example, an algorithm for verifying that a given point is a local minimum does not necessarily provide instructions on how one would find a local minimum. Conversely, even if local minimality of a given point cannot always be efficiently certified, that does not rule out the existence of algorithms that can efficiently find particular local minima that are easy to certify; see e.g. Question 3 of~\cite{pardalos1992open}. Thus, in general, these two questions need to be studied separately.

The functions $f$ for which we study Q1 and Q2 are (multivariate) polynomials. Polynomial functions appear throughout optimization theory either as exact models of objective functions or as approximations thereof. For example, many optimization algorithms involve minimizing Taylor expansions of more complicated functions as a subroutine. As is well known, polynomials can approximate continuous functions arbitrarily well over compact sets. This makes them a particularly suitable candidate for studying local notions such as (i)-(iv). In addition to these representation reasons, since polynomial functions of a given degree are finitely parameterized, they allow for a convenient setting for a formal study of complexity questions. For example, one can study the complexity of Q1 and Q2 in the Turing model of computation, where the size of a given instance is determined by the number of bits required to write down the coefficients of the polynomial (and, in the case of Q1, the entries of the point $x$), which are taken to be rational numbers. For the purposes of analyzing the complexity of these two questions for polynomial functions, we consider the relevant setting in applications where the \edit{degree\footnote{\edit{In this paper, by a degree-$d$ polynomial, we mean a polynomial whose monomials have degree at most $d$. All our complexity results hold for this convention, as well as for the convention which requires a degree-$d$ polynomial to have at least one monomial of degree $d$ with a nonzero coefficient.}}} of the polynomial is fixed and its number of variables increases. We are interested in the existence or non-existence of efficient algorithms for solving these questions in this setting, as established theory (e.g. quantifier elimination theory~\cite{tarski1951decision, seidenberg1954new}) already yields exponential-time algorithms for them.

Let us first comment on the complexity of Q1 and Q2 for some simple and classical cases. For Q1, checking whether a given point is a critical point of a polynomial function (of any degree) can trivially be done in polynomial time simply by evaluating the gradient at that point. To check that a given point is a second-order point, one can additionally compute the Hessian matrix at that point and check that it is positive semidefinite. This can be done in polynomial time, e.g., by performing Gaussian pivot steps along the main diagonal of the matrix~\cite[Section 1.3.1]{murty1988linear} or by computing its characteristic polynomial and checking that the signs of its coefficients alternate~\cite[p. 403]{horn2012matrix}. Since for affine or quadratic polynomials, any second-order point is a local minimum, the only remaining case for Q1 is that of strict local minima. Affine polynomials never have strict local minima, making the question uninteresting. A point is a strict local minimum of a quadratic polynomial if and only if it is a critical point and the associated Hessian matrix is positive definite (pd), i.e., has positive eigenvalues. The latter property can be checked in polynomial time, for example by computing the leading principal minors of the Hessian and checking that they are all positive. As for Q2, the affine case is again uninteresting since there is a critical point (which will also be a second-order point and a local minimum) if and only if the coefficients of all degree-one monomials are zero. For quadratic polynomials, since the entries of the gradient are affine, searching for critical points can be done in polynomial time by solving a linear system. A candidate critcal point will be a second-order point (and a local minimum) if and only if the Hessian is psd, and a strict local minimum if and only if the Hessian is pd.

Other than the aforementioned cases, the only prior result in the literature that we are aware of is due to Murty and Kabadi~\cite{murty1987some}, which settles the complexity of Q1 for degree-4 polynomials. Our contribution in this paper is to settle the complexity of the remaining cases for both Q1 and Q2. A summary of the results is presented in Table \ref{Table: Complexity Checking} and Table \ref{Table: Complexity Existence}. Entries denoted by ``P'' indicate that the problem can be solved in polynomial time. The notation ``SDP'' indicates that the problem of interest can be reduced to solving either one or polynomially-many semidefinite programs (SDP) whose sizes are polynomial in the size of the input. (In fact, the reduction also goes in the other direction for second-order points and local minima; see Theorems~\ref{Thm: SOP SDPF} and \ref{Thm: LM SDPSF}.) Finally, we recall that a strong NP-hardness result implies that the problem of interest remains NP-hard even if the size (i.e. bit length) of the coefficients of the polynomial is $O(\log(n))$, where $n$ is the number of variables. Therefore, unless P=NP, even a pseudo-polynomial time algorithm (i.e., an algorithm whose running time is polynomial in the magnitude of the coefficients, but not necessarily their bit length) cannot exist for the indicated problems in these tables. See \cite{garey2002computers} or \cite[Section 2]{ahmadi2019complexity} for more details on the distinction between weakly and strongly NP-hard problems.

\begin{table}[H]
    \centering
    \begin{tabular}{c|c|c|c|c}
    {\bf Q1: \quad property vs. degree} & $1$ & $2$ & $3$ & $\ge 4$ \\\hline
    Critical point & P & P & P & P\\\hline
    Second-order point & P & P & P & P\\\hline
    Local minimum & P & P & P & strongly NP-hard \cite{murty1987some}\footnotemark\addtocounter{footnote}{-1}\\
    & & & (Theorem \ref{Thm: Checking Min Poly Time}) & \\\hline
    Strict local minimum & P & P & P & strongly NP-hard \cite{murty1987some}\footnotemark\\
    & & & (Corollary \ref{Cor: Check Strict Local Min}) &
    \end{tabular}
    \caption{Complexity of deciding whether a given point is of a certain type, based on the degree of the polynomial. Entries without a reference are classical.}
    \label{Table: Complexity Checking}
\end{table}
\footnotetext{The proof in~\cite{murty1987some} is based on a reduction from the ``matrix copositivity'' problem. However, \cite{murty1987some} only shows that this problem (and thus deciding if a quartic polynomial has a local minimum) is weakly NP-hard, since the reduction to matrix copositivity there is from the weakly NP-hard problem of Subset Sum. Nonetheless, their result can be strengthened by observing that testing matrix copositivity is in fact strongly NP-hard. This claim is implicit, e.g., in~\cite[Corollary 2.4]{de2002approximation}. The NP-hardness of testing whether a point is a strict local minimum of a quartic polynomial is not explicitly stated in~\cite{murty1987some}, though it follows in the weak sense from the weak NP-hardness of Problem 8 of~\cite{murty1987some}. Again, with some work, this can be strengthened to a strong NP-hardness result.}
\begin{table}[H]
    \centering
    \begin{tabular}{c|c|c|c|c}
    {\bf Q2: \quad property vs. degree} & $1$ & $2$ & $3$ & $\ge 4$ \\\hline
    Critical point & P & P & strongly NP-hard & strongly NP-hard \\
    & & & (Theorem \ref{Thm: Critical point cubic NP-hard}) & (Theorem \ref{Thm: Critical point cubic NP-hard})\\\hline
    Second-order point & P & P & SDP & strongly NP-hard\\
    & & & (Corollary \ref{Cor: Complete Cubic SDP SOP}) & (Theorem \ref{Thm: SOP quartic NP-hard})\\\hline
    Local minimum & P & P & SDP & strongly NP-hard \\
    & & & (Algorithm \ref{Alg: Complete Cubic SDP}) & (Theorem \ref{Thm: LM quartic NP-hard})\footnotemark\addtocounter{footnote}{-1}\\\hline
    Strict local minimum & P & P & SDP & strongly NP-hard\\
    & & & (Algorithm \ref{Alg: Complete Cubic SDP}, Remark \ref{Rem: Find SLM}) & (Theorem \ref{Thm: LM quartic NP-hard})\footnotemark
    \end{tabular}
    \caption{Complexity of deciding whether a polynomial has a point of a certain type, based on the degree of the polynomial. Entries without a reference are classical.}
    \label{Table: Complexity Existence}
\end{table}
\footnotetext{The proof of Theorem \ref{Thm: LM quartic NP-hard} will appear in an upcoming paper by the authors, as a corollary of it answers a question originally posed in \cite{pardalos1992open}; see Section \ref{Sec: NP-hardness results}.}

The majority of the technical work in this paper is spent on the case of cubic polynomials. It is somewhat surprising that many of the problems of interest to us are tractable for cubics, especially the search for local minima. This is in contrast to the intractability of other interesting problems related to cubic polynomials, e.g., minimizing them over the unit sphere~\cite{nesterov2003random}, or checking their convexity over a box~\cite{ahmadi2019complexity}. It is also interesting to note that second-order points of cubic polynomials are easier to find than their critical points, despite being a more restrictive type of point. This shows that the right approach to finding second-order points involves bypassing the search for critical points as an initial step.

\subsection{Organization and Main Contributions of the Paper}

Section \ref{Sec: NP-hardness results} covers the NP-hardness results from Table~\ref{Table: Complexity Existence}. The remainder of the paper is devoted to our results on cubic polynomials, which fills in the remaining entries of Tables~\ref{Table: Complexity Checking} and \ref{Table: Complexity Existence}. In Section~\ref{Sec: Local Min}, we give a characterization of local minima of cubic polynomials (Theorem~\ref{Thm: TOC}) and show that it can be checked in polynomial time (Theorem \ref{Thm: Checking Min Poly Time}). In Section~\ref{Sec: Geometry}, we give some geometric facts about local minima of cubic polynomials. For example, we show that the set of local minima of a cubic polynomial $p$ is convex (Theorem~\ref{Thm: WLM Convex}), and we relate this set to the second-order points of $p$
and to the set of minima of $p$ over points where $\Hess p$ is positive semidefinite (Theorem~\ref{Thm: Closure} and Theorem~\ref{Thm: Local Minima SOP}). In Section~\ref{SSec: Cubic Spectrahedra}, we show that the interior of any spectrahedron is the projection of the local minima of some cubic polynomial (Theorem~\ref{Thm: Cubic Spectrahedron}). In Section~\ref{Sec: Complexity}, we use this result to show that deciding if a cubic polynomial has a local minimum or a second-order point is at least as hard as some semidefinite feasibility problems.

In Section~\ref{Sec: Finding Local Min}, we start from a ``sum of squares'' approach to finding second-order points of a cubic polynomial (Theorem~\ref{Thm: cubic sdp} and Theorem~\ref{Thm: solution recovery sdp}), and build upon it (Section~\ref{SSec: Simplification}) to arrive at an efficient semidefinite representation of these points (Corollary~\ref{Cor: Complete Cubic SDP SOP}). This also leads to an algorithm for finding local minima of cubic polynomials by solving polynomially-many SDPs of polynomial size (Algorithm~\ref{Alg: Complete Cubic SDP}). In Section~\ref{Sec: Conclusions}, we take preliminary steps towards some interesting future research directions, such as the design of an unregularized third-order Newton method that would use as a subroutine our algorithm for finding local minima of cubic polynomials (Section \ref{SSec: Cubic Newton}).

\subsection{Preliminaries and Notation}\label{SSec: Preliminaries}

We review some standard facts about local minina; more preliminaries specific to cubic polynomials appear in Section~\ref{SSec: Cubic Preliminaries}. Three well-known optimality conditions in unconstrained optimization are the \emph{first-order necessary condition} (FONC), the \emph{second-order necessary condition} (SONC), and the \emph{second-order sufficient condition} (SOSC). Respectively, they are that the gradient at any local minimum is zero, the Hessian at any local minimum is psd, and that any critical point at which the Hessian is positive definite is a strict local minimum. A vector $d \in \Rn$ is said to be a \emph{descent direction} for a function $p: \Rn \to \R$ at a point $\xbar \in \Rn$ if there exists a scalar $\epsilon > 0$ such that $p(\xbar + \alpha d) < p(\xbar)$ for all $\alpha \in (0,\epsilon)$. Existence of a descent direction at a point clearly implies that the point is not a local minimum. However, in general, the lack of a descent direction at a point does not imply that the point is a local minimum (see, e.g., Example \ref{Ex: no local min}).

Next, we establish some basic notation which will be used throughout the paper. We denote the set of $n\times n$ real symmetric matrices by $\Snn$. For a matrix $M\in\Snn$, the notation $M \succeq 0$ denotes that $M$ is positive semidefinite, $M \succ 0$ denotes that it is positive definite, and $\Tr(M)$ denotes its trace, i.e. the sum of its diagonal entries. For a matrix $M$, the notation $\nulls(M)$ denotes its null space, and $\cols(M)$ denotes its column space. All vectors are taken to be column vectors. For two vectors $x$ and $y$, the notation $(x,y)$ denotes the vector $\bvec x \\ y \evec$. The notation $0_n$ denotes the vector of length $n$ containing only zeros. The notation $e_i$ denotes the $i$-th coordinate vector, i.e., the vector with a one in its $i$-th entry and zeros everywhere else.

\section{NP-hardness Results}\label{Sec: NP-hardness results}

In this section, we present reductions that show our NP-hardness results from Tables \ref{Table: Complexity Checking} and \ref{Table: Complexity Existence}. For concreteness, we construct these reductions from the (simple) MAXCUT problem, though our proof can work with any NP-hard problem that can be encoded by quadratic equations with ``small enough'' coefficients. Recall that in the (simple) MAXCUT problem, we are given as input an undirected and unweighted graph $G$ on $n$ vertices and an integer $k \le \edit{\frac{n(n-1)}{2}}$. We are then asked whether there is a cut in $G$ of size $k$, i.e. a partition of the vertices into two sets $S_1$ and $S_2$ such that the number of edges with one endpoint in $S_1$ and one endpoint in $S_2$ is equal to $k$. It is well known that the (simple) MAXCUT problem is strongly NP-hard~\cite{garey2002computers}.
	
If we denote the adjacency matrix of $G$ by $E \in \Snn$, it is straightforward to see that $G$ has a cut of size $k$ if and only if the following system of quadratic equations is feasible:
	
	\begin{equation}\label{Eq: MAXCUT QP}
    \begin{aligned}
	q_0(x) &\defeq \frac{1}{4}\sum_{i=1}^n \sum_{j=1}^n E_{ij}(1-x_ix_j) - k = 0,\\
	q_i(x) &\defeq x_i^2 - 1 = 0, i = 1, \ldots, n.
	\eaeql
	Indeed, the second set of constraints enforces each variable $x_i$ to be $-1$ or $1$, and any $x \in \{-1,1\}^n$ encodes a cut in $G$ by assigning vertices with $x_i = 1$ to one side of the partition, and those with $x_i=-1$ to the other. Observe that with this encoding, $x_ix_j$ equals $1$ whenever the two vertices $i$ and $j$ are on the same side and $-1$ otherwise. The size of the cut is therefore given by $\frac{1}{4}\sum_{i=1}^n \sum_{j=1}^n E_{ij}(1-x_ix_j)$, noting that every edge is counted twice.

\begin{theorem}\label{Thm: Critical point cubic NP-hard}
	It is strongly NP-hard to decide whether a polynomial $p: \Rn \to \R$ of degree greater than or equal to three has a critical point.
\end{theorem}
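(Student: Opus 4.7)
The plan is to reduce from the (simple) MAXCUT problem, leveraging the quadratic system in \eqref{Eq: MAXCUT QP} which encodes any MAXCUT instance. The key observation is that if $p$ is a cubic polynomial, then $\nabla p$ is a quadratic map, so critical points of $p$ are solutions to a quadratic system. The challenge is to engineer a cubic $p$ whose critical points capture exactly the feasibility of the MAXCUT system $q_0(x) = q_1(x) = \cdots = q_n(x) = 0$, while adding no spurious critical points.

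I would use a Lagrangian-style construction. Introduce $n+1$ new variables $y_0, y_1, \ldots, y_n$ and define
\[
p(x, y) \defeq \sum_{i=0}^n y_i \, q_i(x).
\]
Since each $q_i$ is quadratic in $x$ and the $y_i$ appear linearly, $p$ is a polynomial of degree $3$ in $n+(n+1)$ variables. The coefficients of $p$ are either entries of the adjacency matrix of $G$ (in $\{0,1\}$) or a multiple of the target cut size $k$, and hence have bit length $O(\log n)$; after clearing the factor of $\tfrac14$ from $q_0$, all coefficients are integers bounded by a polynomial in $n$, which is what is needed to invoke strong NP-hardness.

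The main claim to verify is that $p$ has a critical point if and only if the system $q_0(x) = \cdots = q_n(x) = 0$ is feasible. Computing gradients,
\[
\frac{\partial p}{\partial y_i}(x,y) = q_i(x), \qquad \nabla_x p(x,y) = \sum_{i=0}^n y_i \, \nabla q_i(x).
\]
For the forward direction, if the MAXCUT system has a solution $x^*$, then $(x^*, 0)$ is a critical point of $p$: the $y$-gradient vanishes because every $q_i(x^*)=0$, and the $x$-gradient vanishes because all $y_i=0$. For the reverse direction, any critical point $(\tilde x, \tilde y)$ of $p$ forces $q_i(\tilde x)=0$ for each $i$, so $\tilde x$ solves the MAXCUT system.

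To extend the hardness result to every degree $d \ge 4$, pick one further auxiliary variable $z$ and consider
\[
p_d(x,y,z) \defeq p(x,y) + z^d.
\]
This is a polynomial of degree exactly $d$ whose only effect on the critical-point conditions is the extra equation $d \, z^{d-1} = 0$, forcing $z=0$ and leaving the analysis above unchanged. Thus $p_d$ has a critical point iff the MAXCUT system is feasible, completing the reduction in all degrees $d \ge 3$. I do not anticipate a serious obstacle beyond making sure no unintended critical points are introduced (handled by checking that the $y$-gradient alone already forces $q_i(x)=0$) and that coefficient sizes are logarithmic in $n$ for strong NP-hardness.
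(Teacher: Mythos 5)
Your proof is correct and takes essentially the same approach as the paper: the same Lagrangian-style cubic $p(x,y)=\sum_{i=0}^n y_i q_i(x)$, the same argument that the $y$-gradient equations $q_i(x)=0$ capture feasibility while $(\xbar,0)$ is critical in the forward direction, and the same trick of adding $z^d$ to lift the construction to higher degrees (which appears in the paper as a footnote). The only cosmetic difference is that you propose clearing the factor $\tfrac14$ from $q_0$ to get integer coefficients, whereas the paper simply notes the rational coefficients already have $O(\log n)$ bit length; both suffice for strong NP-hardness.
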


\begin{proof}
	\edit{We prove this statement for a degree-3 polynomial, which also trivially proves it for polynomials of degree greater than 3.\footnote{\edit{If one desires the polynomial in our reduction to have a nonzero term of degree $d \ge 4$, then this can be done, for example, by introducing another variable $v$, and adding the term $v^d$ to our construction. The same claim applies to the proof of Theorem~\ref{Thm: SOP quartic NP-hard}.}}}
	
	Given an instance of the (simple) MAXCUT problem with a graph on $n$ vertices, let the quadratic polynomials $q_0, \ldots, q_n$ be as in (\ref{Eq: MAXCUT QP}), and consider the following \edit{degree-3} polynomial in \edit{$2n+1$} variables \edit{$(x_1, \ldots, x_n, y_0, y_1, \ldots, y_n)$:
	$$p(x,y) = \sum_{i=0}^{n} y_iq_i(x).$$}
	Note that all coefficients of this polynomial take $O(\log(n))$ bits to write down. We show that \edit{$p(x,y)$} has a critical point if and only if the quadratic system $q_0(x) = 0, \ldots, q_{n}(x) = 0$ is feasible. Observe that the gradient of $p$ is given by
	
	\edit{$$\bvec \\ \frac{\partial p}{\partial x}\\\\\hline\\ \frac{\partial p}{\partial y} \\\\\evec = \bvec \sum_{i=0}^n y_i\frac{\partial q_i}{\partial x_1}(x) \\ \vdots \\ \sum_{i=0}^n y_i\frac{\partial q_i}{\partial x_n}(x)\\ \hline q_0(x) \\ \vdots \\ q_n(x)\evec.$$}
	
	If $\xbar \in \Rn$ is a solution to (\ref{Eq: MAXCUT QP}), then the point \edit{$(\xbar, 0_{n+1})$} is a critical point of $p$. Conversely, if \edit{$(\xbar, \ybar)$} is a critical point of $p$, then, since \edit{$\frac{\partial p}{\partial y}(\xbar,\ybar) = 0$}, $\xbar$ must be a solution to (\ref{Eq: MAXCUT QP}).
\end{proof}

\begin{theorem}\label{Thm: SOP quartic NP-hard}
	It is strongly NP-hard to decide whether a polynomial $p: \Rn \to \R$ of degree greater than or equal to four has a second-order point.
\end{theorem}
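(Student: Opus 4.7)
The plan is to reduce from MAXCUT in the same spirit as Theorem~\ref{Thm: Critical point cubic NP-hard}, but with a polynomial designed so that the Hessian at any critical point is forced to encode feasibility of the underlying quadratic system. Given a MAXCUT instance, let $q_0, \ldots, q_n$ be the quadratic polynomials defined in~(\ref{Eq: MAXCUT QP}), and consider the degree-4 polynomial in $2n+2$ variables $(x_1, \ldots, x_n, y_0, \ldots, y_n, z)$:
\[ p(x, y, z) \defeq z \sum_{i=0}^n y_i q_i(x). \]
The coefficients of $p$ take $O(\log n)$ bits to encode, so it suffices to establish that $p$ has a second-order point if and only if the system $q_0(x) = 0, \ldots, q_n(x) = 0$ is feasible.

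The easy direction is that if $\bar x$ solves the system, then at $(\bar x, 0, 0)$ both the gradient and the Hessian of $p$ vanish identically, so $(\bar x, 0, 0)$ is a second-order point. For the converse, I would compute the entries of $\Hess p$ that involve only $y$ and $z$: one checks that $\partial^2 p / \partial y_i \partial y_j = 0$, $\partial^2 p / \partial z^2 = 0$, and $\partial^2 p / \partial y_i \partial z = q_i(x)$. Hence the principal submatrix of $\Hess p$, at any point $(\bar x, \bar y, \bar z)$, indexed by the $y$- and $z$-variables has the form
\[ \begin{pmatrix} 0_{(n+1)\times(n+1)} & q(\bar x) \\ q(\bar x)^T & 0 \end{pmatrix}, \]
where $q(\bar x) \defeq (q_0(\bar x), \ldots, q_n(\bar x))^T$. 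Since any principal submatrix of a psd matrix is psd, and a symmetric block matrix of this particular form is psd only when $q(\bar x) = 0$ (otherwise the test vector $(v_y, v_z) \defeq (-q(\bar x), 1)$ produces a negative quadratic form value), every second-order point of $p$ must satisfy $q(\bar x) = 0$, so $\bar x$ solves the system.

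To extend the argument to any degree $d > 4$, I would introduce an additional variable $v$ and add the term $v^d$ to $p$, mirroring the footnote to Theorem~\ref{Thm: Critical point cubic NP-hard}. Since $dv^{d-1}$ and $d(d-1)v^{d-2}$ both vanish at $v = 0$ (for $d \ge 2$), this modification only appends a zero row and column to the Hessian at any critical point of the form $(\bar x, \bar y, \bar z, 0)$, preserving both directions of the equivalence. The main obstacle in this plan is verifying the block-Hessian analysis cleanly, but this reduces to an elementary calculation once one notices that $p$ is separately linear in each of $y_i$ and $z$, which forces the $(y,z)$-block of $\Hess p$ to have the clean anti-diagonal structure above.
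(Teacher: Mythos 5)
Your proposal is correct, and it takes a genuinely different route from the paper's. The paper uses the polynomial $p(x,y,z) = \sum_{i=0}^n \big(y_i^2 q_i(x) - z_i^2 q_i(x)\big)$ in $3n+2$ variables, and the mechanism that forces $q(\bar x)=0$ is that $\partial^2 p/\partial y_i^2 = 2q_i(\bar x)$ and $\partial^2 p/\partial z_i^2 = -2q_i(\bar x)$ are both diagonal entries of a psd Hessian, hence both nonnegative, hence zero. You instead use $p(x,y,z) = z\sum_{i=0}^n y_i q_i(x)$ in only $2n+2$ variables, and the mechanism is the observation that the $(y,z)$-principal submatrix has the anti-diagonal block form $\left[\begin{smallmatrix} 0 & q(\bar x) \\ q(\bar x)^T & 0 \end{smallmatrix}\right]$, which is psd only when the off-diagonal block vanishes. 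Both facts about psd matrices are elementary, but the constructions are distinct: the paper pairs each $q_i$ against its negation via two auxiliary squared variables, whereas you exploit the vanishing diagonal blocks that arise because $p$ is separately linear in each of $y_0,\ldots,y_n$ and $z$. Your reduction is slightly more economical in the number of variables. The extension to degrees $d>4$ by adding $v^d$ matches the footnote technique the paper itself uses for Theorem~\ref{Thm: Critical point cubic NP-hard}, so that part is essentially identical.
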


\begin{proof}
    \edit{We prove this statement for degree-4 polynomials, which also trivially proves it for polynomials of degree greater than 4.}

	Given an instance of the (simple) MAXCUT problem with a graph on $n$ vertices, let the quadratic polynomials $q_0, \ldots, q_n$ be as in (\ref{Eq: MAXCUT QP}), and consider the following \edit{degree-4} polynomial in \edit{$3n+2$} variables \edit{$(x_1, \ldots, x_n, y_0, y_1, \ldots, y_n, z_0, z_1, \ldots, z_n)$}:
	\edit{$$p(x,y,z) = \sum_{i=0}^{n} \left(y_i^2 q_i(x) - z_i^2 q_i(x)\right).$$}
	Note that all coefficients of this polynomial take $O(\log(n))$ bits to write down. We show that \edit{$p(x,y,z)$} has a second-order point if and only if the quadratic system $q_0(x) = 0, \ldots, q_{n}(x) = 0$ is feasible. 
	
	Observe that $\frac{\partial^2 p}{\partial y^2}$ is an $(n+1) \times (n+1)$ diagonal matrix with $2q_0(x), \ldots, 2q_n(x)$ on its diagonal. Similarly, $\frac{\partial^2 p}{\partial z^2}$ is an $(n+1) \times (n+1)$ diagonal matrix with $-2q_0(x), \ldots, -2q_n(x)$ on its diagonal. Suppose first that \edit{$(\xbar, \ybar, \bar z)$} is a second-order point of $p$. Since \edit{$\Hess p(\xbar, \ybar, \bar z) \succeq 0$}, and since $\frac{\partial^2 p}{\partial y^2}$ and $\frac{\partial^2 p}{\partial z^2}$ are both principal submatrices of $\Hess p$, it must be that $q_0(\xbar) = 0, \ldots, q_n(\xbar) = 0$.
	
	Now suppose that $\xbar \in \Rn$ is a solution to (\ref{Eq: MAXCUT QP}). We show that \edit{$(\xbar, 0_{n+1}, 0_{n+1})$} is a second-order point of $p$. Note that $\frac{\partial p}{\partial x}$ is quadratic in $y$ and $z$, $\frac{\partial p}{\partial y}$ is linear in $y$, \edit{and} $\frac{\partial p}{\partial z}$ is linear in $z$. 
	Thus \edit{$(\xbar, 0_{n+1}, 0_{n+1})$} is a critical point of $p$. Now observe that the entries of $\frac{\partial^2 p}{\partial x^2}$ are quadratic in $y$ and $z$ or are zero, the entries of $\frac{\partial^2 p}{\partial x \partial y}$ are linear in $y$ or are zero, the entries of $\frac{\partial^2 p}{\partial x \partial z}$ are linear in $z$ or are zero,
	\edit{$\frac{\partial^2 p}{\partial y^2}(\xbar, 0_{n+1}, 0_{n+1})$ and $\frac{\partial^2 p}{\partial z^2}(\xbar, 0_{n+1}, 0_{n+1})$} are both zero, and all other entries of $\Hess p$ are zero. Thus \edit{$\Hess p(\xbar, 0_{n+1}, 0_{n+1}) = 0$}, and we conclude that \edit{$(\xbar, 0_{n+1}, 0_{n+1})$} is a second-order point of $p$.
\end{proof}

The remaining two NP-hardness results from Table \ref{Table: Complexity Existence} are stated next, but proven in an upcoming paper by the authors in~\cite{Local_Min_QP}. The reason we have decided to present this result separately is that a corollary of it answers a question of Pardalos and Vavasis on existence of an efficient algorithm for finding a local minimum of a quadratic function over a polytope. This question appeared in 1992 on a list of seven open problems in complexity theory for numerical optimization~\cite{pardalos1992open} and is answered negatively in~\cite{Local_Min_QP}.

\begin{theorem}[\cite{Local_Min_QP}]\label{Thm: LM quartic NP-hard}
    It is strongly NP-hard to decide whether a polynomial $p: \Rn \to \R$ of degree greater than or equal to four has a local minimum. The same statement holds for testing existence of a strict local minimum.
\end{theorem}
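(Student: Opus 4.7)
The natural plan is to reduce from feasibility of the MAXCUT-encoding system (\ref{Eq: MAXCUT QP}), in the spirit of Theorems~\ref{Thm: Critical point cubic NP-hard} and \ref{Thm: SOP quartic NP-hard}: construct a quartic polynomial $p$ in the $x_i$'s together with some auxiliary variables whose (strict) local minima correspond precisely to solutions of $q_0(x) = \cdots = q_n(x) = 0$. The forward direction (system feasible $\Rightarrow$ $p$ has a local minimum) should be explicit: exhibit a candidate point $(\bar{x}, 0, \ldots, 0)$ built from a solution $\bar{x}$, and verify local minimality by expanding $p$ around it. The reverse direction (local minimum of $p$ $\Rightarrow$ system feasible) is the content of the reduction and should be argued from the first- and second-order necessary conditions applied to the chosen encoding.

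A tempting first attempt is to reuse the SOP construction $p(x,y,z) = \sum_{i=0}^n (y_i^2 - z_i^2) q_i(x)$ from Theorem~\ref{Thm: SOP quartic NP-hard}, possibly augmented with a higher-order stabilizer such as $\alpha(\|y\|^4 + \|z\|^4)$. The hope is that the argument forcing $q_i(\bar{x}) = 0$ at any second-order point survives, while the stabilizer promotes $(\bar{x},0,0)$ to a genuine local minimum. However, at $(\bar{x},0,0)$ the full Hessian of this $p$ vanishes, so local minimality hinges on third- and fourth-order behavior. Along curves of the form $y_i = t,\ x - \bar{x} = t^2 v$ the cubic cross-terms $y_i^2 \nabla q_i(\bar{x})^{\top}(x-\bar{x})$ generically create descent directions, so a bare adaptation of the SOP reduction is not enough; one must either cancel these cubic cross-terms by judiciously choosing the auxiliary structure or arrange enough auxiliary variables so that the positive quartic part dominates them along every direction.

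A cleaner and likely more robust route, which is also what resolves the Pardalos--Vavasis question mentioned in the paper, is to factor the reduction through the problem of deciding whether a quadratic function admits a local minimum over a polytope. One would first establish strong NP-hardness of the latter (for instance by a direct reduction from (\ref{Eq: MAXCUT QP})), and then lift it to an unconstrained quartic by introducing slack variables $s_i$ to rewrite $a_i^{\top} x \le b_i$ as $a_i^{\top} x + s_i^2 = b_i$ and folding these into the objective via squared penalties $M \sum_i (a_i^{\top} x + s_i^2 - b_i)^2$ with a polynomially-bounded multiplier $M$. The equivalence is then argued by showing that any local minimum of the unconstrained quartic has every penalty term equal to zero (else a descent direction exists) and hence encodes a local minimum of the original constrained QP.

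The main obstacles in either route are: (a) preserving strong NP-hardness, i.e., keeping all coefficients of bit length $O(\log n)$, which constrains the multiplier $M$ and forbids the usual ``large penalty'' trick; (b) ruling out spurious local minima in the infeasible case, which for quartics requires a delicate higher-order analysis along Hessian-flat directions; and (c) upgrading the reduction to strict local minima. For the last point, one can typically add a strictly convex perturbation in the auxiliary variables (e.g.\ $\varepsilon \|y\|^2 + \varepsilon \|z\|^2$ with $\varepsilon$ of bit length $O(\log n)$) that preserves the yes/no answer of the reduction while forcing every local minimum of the perturbed polynomial to be strict. Getting all three pieces to coexist in a single polynomially-sized quartic is the crux of the theorem, and is the reason the authors defer its full proof to the companion paper.
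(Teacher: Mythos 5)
The paper does not actually contain a proof of this theorem: it is stated with the citation \cite{Local_Min_QP}, and the surrounding text explicitly says the proof ``will appear in an upcoming paper by the authors.'' So there is no argument in this paper against which your sketch can be checked. Your proposal, moreover, is a roadmap rather than a proof, and you acknowledge this yourself. That said, the route you identify---establish strong NP-hardness of deciding whether a quadratic function has a local minimum over a polytope, then pass to an unconstrained quartic by replacing each constraint $a_i^T x \le b_i$ with a squared-penalty term $M(a_i^T x + s_i^2 - b_i)^2$---is the route actually taken in the cited companion work, and you correctly diagnose why a direct adaptation of Theorem~\ref{Thm: SOP quartic NP-hard} fails (the Hessian vanishes at the candidate point, so local minimality is decided by third- and fourth-order terms, and the cross terms $y_i^2\, \nabla q_i(\bar x)^T (x-\bar x)$ generically create descent curves).

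Where the proposal stops short of a proof is precisely at the points you flag as the ``crux,'' and these are not small gaps. The claim that every local minimum of the penalized quartic has all residuals $a_i^T x + s_i^2 - b_i$ equal to zero is asserted (``else a descent direction exists'') but not argued: for a fixed finite $M$ this is genuinely delicate, and in particular the case $a_i^T x^* - b_i > 0$ with $s_i^* = 0$ is not resolved by perturbing $s_i$, since the penalty restricted to $s_i$ has a local minimum at $s_i = 0$ there. No argument is offered that an $M$ of bit length $O(\log n)$ suffices, which is exactly what is needed for \emph{strong} NP-hardness, and the upgrade to strict local minima via an $\varepsilon\|y\|^2 + \varepsilon\|z\|^2$ perturbation is stated without verifying that it preserves the reduction's yes/no answer. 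Each of these is a substantial lemma in its own right, and until they are supplied the proposal does not establish the theorem.
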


\section{Checking Local Minimality of a Point for a Cubic Polynomial}\label{Sec: Local Min}

As the reader can observe from Tables~\ref{Table: Complexity Checking} and \ref{Table: Complexity Existence} from Section~\ref{Sec: Introduction}, the remaining entries all have to do with the case of cubic polynomials. To answer these questions about cubics, we start in this section by showing that the problem of deciding if a given point is a local minimum (or a strict local minimum) of a cubic polynomial is polynomial-time solvable. This answers the remaining cases in Table~\ref{Table: Complexity Checking}. 
We first make certain observations about cubic polynomials that will be used throughout the paper. 

\subsection{Preliminaries on Cubic Polynomials}\label{SSec: Cubic Preliminaries}

It is easy to observe that a univariate cubic polynomial has either no local minima, exactly one local minimum (which is strict), or infinitely many non-strict local minima (in the case that the polynomial is constant). Further observe that if a point $\xbar \in \Rn$ is a (strict) local minimum of a function $p: \Rn \to \R$, then for any fixed point $\ybar \in \Rn$, \edit{with $\ybar \ne \xbar$}, the restriction of $p$ to the line going through $\xbar$ and $\ybar$ ---i.e. the univariate function $q(\alpha) \defeq p(\xbar + \alpha (\ybar - \xbar))$---has a (strict) local minimum at $\alpha = 0$. Since the restriction of a multivariate cubic polynomial to any line is a univariate polynomial of degree at most three, the previous two facts imply that (i) if a cubic polynomial has a strict local minimum, then it must be the only local minimum (strict or non-strict), and that (ii) if a cubic polynomial has multiple local minima, then the polynomial must be constant on the line connecting any two of these (necessarily non-strict) local minima.

Observe that for any cubic polynomial $p$, the error term of the second-order Taylor expansion is the cubic homogeneous component of $p$. More formally, for any point $\xbar \in \Rn$ and direction $v \in \Rn$,
\beq \label{Eq: cubic taylor}
p(\xbar + \lambda v) = p_3(v)\lambda^3 + \frac{1}{2}v^T \Hxb v \lambda^2 + \gxb^Tv\lambda + p(\xbar),
\eeq 
where $p_3$ is the collection of terms of $p$ of degree exactly 3.

Note that the Hessian of any cubic $n$-variate polynomial is an affine matrix of the form $\sum_{i=1}^n x_iH_i + Q$, where $H_i$ and $Q$ are all $n \times n$ symmetric matrices and the $H_i$ satisfy

\begin{equation}\label{Eq: Valid Hessian}
(H_i)_{jk} = (H_j)_{ik} = (H_k)_{ij}
\end{equation}
for any $i,j,k \in \{1, \ldots, n\}$. This is because an $n \times n$ symmetric matrix $A(x) \defeq A(x_1, \ldots, x_n)$ is a valid Hessian matrix if and only if $\frac{\partial}{\partial x_i} A_{jk}(x) = \frac{\partial}{\partial x_j} A_{ik}(x) = \frac{\partial}{\partial x_k} A_{ij}(x)$ for all $i,j,k \in \{1, \ldots, n\}$. If $\sum_{i=1}^n x_iH_i + Q$ is a Hessian matrix, then the cubic polynomial which gives rise to it is of the form

\beq \label{Eq: Cubic Poly Form} \frac{1}{6}\sum_{i=1}^n x^T x_iH_ix + \frac{1}{2}x^TQx + b^Tx + c.\eeq
In this paper, it is sometimes convenient for us to parametrize a cubic polynomial in the above form. As the scalar term in (\ref{Eq: Cubic Poly Form}) is irrelevant for deciding local minimality or finding local minima, in the remainder of this paper, we take $c=0$ without loss of generality. Observe that the gradient of the polynomial in (\ref{Eq: Cubic Poly Form}) is $\frac{1}{2}\sum_{i=1}^n x_iH_ix + Qx + b$, or equivalently a vector whose $i$-th entry is $\frac{1}{2}x^TH_ix + e_i^TQx + b_i$.

\subsection{Local Minimality of a Point for a Cubic Polynomial}

In this section, we give a characterization of local minima of cubic polynomials and show that this characterization can be checked in polynomial time. Recall that we use the notation $p_3$ to denote the cubic homogeneous component of a cubic polynomial $p$, and $\nulls(M)$ (resp. $\cols(M)$) to denote the null space (resp. column space) of a matrix $M$.

\begin{theorem}\label{Thm: TOC}
	A point $\xbar \in \Rn$ is a local minimum of a cubic polynomial $p: \Rn \to \R$ if and only if the following three conditions hold:
	\begin{itemize}
		\item $\grad p(\xbar) = 0,$
		\item $\Hess p(\xbar) \succeq 0,$
		\item $\gp3d = 0, \forall d \in \nulls(\Hxb).$
	\end{itemize}
\end{theorem}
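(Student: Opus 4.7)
The plan is to establish the three conditions as necessary and sufficient for local minimality separately. The key tool throughout is the exact Taylor identity from \eqref{Eq: cubic taylor}: when $\gxb = 0$,
\[
p(\xbar + h) - p(\xbar) = p_3(h) + \tfrac{1}{2} h^T \Hxb h.
\]

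For necessity, the first two conditions are just the classical FONC and SONC. For the third, fix $d \in \nulls(\Hxb)$ and an arbitrary $w \in \Rn$. Setting $h = \lambda d$ immediately yields $p_3(d)\lambda^3 \geq 0$ for all small $\lambda$ of either sign, forcing $p_3(d) = 0$. To upgrade to $\grad p_3(d) = 0$, I would use the two-scale perturbation $h = t d \pm t^2 w$. Using homogeneity of $p_3$ and $\Hxb d = 0$, one computes
\[
p(\xbar + t d \pm t^2 w) - p(\xbar) = t^4 \left[ \pm \grad p_3(d)^T w + \tfrac{1}{2} w^T \Hxb w \right] + O(t^5).
\]
Nonnegativity for small $|t|$ in both sign choices yields $|\grad p_3(d)^T w| \leq \tfrac{1}{2} w^T \Hxb w$. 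Exploiting $\Hxb \succeq 0$, the substitution $w \mapsto \epsilon w$ followed by sending $\epsilon \to 0$ forces $\grad p_3(d)^T w = 0$; since $w$ is arbitrary, $\grad p_3(d) = 0$.

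For sufficiency, the goal is to show $p(\xbar + h) \geq p(\xbar)$ whenever $\|h\|$ is sufficiently small. Decompose $h = h_1 + h_2$ with $h_1 \in \nulls(\Hxb)$ and $h_2 \in \cols(\Hxb)$. Then $h^T \Hxb h = h_2^T \Hxb h_2 \geq \lambda_{\min}^+ \|h_2\|^2$, where $\lambda_{\min}^+$ denotes the smallest positive eigenvalue of $\Hxb$. Expanding $p_3$ via its cubic Taylor identity,
\[
p_3(h) = p_3(h_1) + \grad p_3(h_1)^T h_2 + \tfrac{1}{2} h_2^T \Hess p_3(h_1) h_2 + p_3(h_2),
\]
the first two summands vanish under the third condition (the second directly; the first via Euler's identity $3 p_3(h_1) = h_1^T \grad p_3(h_1) = 0$). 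The remaining two summands are bounded in absolute value by $C(\|h_1\| + \|h_2\|) \|h_2\|^2$ for some constant $C$ depending only on $p_3$. Combining,
\[
p(\xbar + h) - p(\xbar) \geq \|h_2\|^2 \left( \tfrac{\lambda_{\min}^+}{2} - C(\|h_1\| + \|h_2\|) \right) \geq 0
\]
once $\|h\|$ is sufficiently small. The degenerate case $\Hxb = 0$ is handled separately: the third condition then forces $\grad p_3 \equiv 0$, whence $p_3 \equiv 0$ and $p(\xbar + h) - p(\xbar) = 0$.

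The main obstacle is the necessity of the third condition. Testing only along a null-space direction $d$ of $\Hxb$ yields merely $p_3(d) = 0$, since $\lambda^3$ is sign-changing. The essential trick is to introduce a transverse perturbation $w$ at a slower scale ($t^2$ rather than $t$) so that the only available positive contribution from the Hessian, $\tfrac{1}{2} w^T \Hxb w$, appears at exactly the same order in $t$ as the mixed cubic term $\grad p_3(d)^T w$—and is not swamped either by a lower-order Hessian term or by the higher-order cubic remainders.
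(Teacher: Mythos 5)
Your proof is correct and follows essentially the same approach as the paper: for sufficiency, the decomposition $h = h_1 + h_2$ into $\nulls(\Hxb) \oplus \cols(\Hxb)$ together with the bound $h_2^T \Hxb h_2 \ge \lambda_{\min}^+ \|h_2\|^2$ (the paper's Lemma~\ref{Lem: Smallest Nonzero Eigenvalue}) is exactly what the paper does, merely recast in terms of $\|h_1\|$, $\|h_2\|$ rather than scalars $\alpha,\beta$ along unit vectors. For necessity, both proofs rest on the same two-scale perturbation $h = td + t^2 w$ that places $\grad p_3(d)^T w$ and $w^T\Hxb w$ at the same order $t^4$. The one genuine (if minor) difference is in how the conclusion $\grad p_3(d)=0$ is extracted: the paper specializes to the direction $w = -\grad p_3(d)/\|\grad p_3(d)\|$ and builds an explicit descent parabola, which requires the intermediate observation that $\grad p_3(d) \in \cols(\Hxb)$ so that $w^T\Hxb w > 0$; you instead keep $w$ arbitrary, derive the inequality $|\grad p_3(d)^T w| \le \tfrac{1}{2} w^T\Hxb w$ from the two sign choices, and kill the right-hand side with the rescaling $w \mapsto \epsilon w$, $\epsilon \to 0$. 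Your extraction is a bit more direct (it avoids the column-space observation), while the paper's pays off elsewhere by producing an explicit ``descent parabola'' certificate, which it then reuses in Example~\ref{Ex: no local min}.
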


Note that the first two conditions are the well-known FONC and SONC. Throughout the paper, we refer to the third condition as the \emph{third-order condition (TOC)} for optimality. This condition is requiring the gradient of the cubic homogeneous component of $p$ to vanish on the null space of the Hessian of $p$ at $\xbar$. We remark that the FONC, SONC, and TOC together are in general neither sufficient nor necessary for a point to be a local minimum of a polynomial of degree higher than three. The first claim is trivial (consider, e.g., $p(x) = x^5$ at $x = 0$); for the second claim see Example \ref{Ex: TOC not necessary}.

\begin{remark}\label{Rem: TONC}
It is straightforward to see that any local minimum $\xbar$ of a cubic polynomial $p$ satisfies a condition similar to the TOC, that $p_3(d) = 0, \forall d \in \nulls(\Hxb)$. Indeed, if $\xbar$ is a second-order point and $d \in \nulls(\Hxb)$, then Equation (\ref{Eq: cubic taylor}) gives $p(\xbar + \lambda d) = p_3(d)\lambda^3 + p(\xbar)$. Hence, if $p_3(d)$ is nonzero, then either $d$ or $-d$ is a descent direction for $p$ at $\xbar$, and so $\xbar$ cannot be a local minimum. This observation was made in~\cite{anandkumar2016efficient} for three-times differentiable functions, and is referred to as the ``third-order necessary condition'' (TONC) for optimality. Note that because $p_3$ is homogeneous of degree three, from Euler's theorem for homogeneous functions we have $3 p_3(x) = x^T\grad p_3(x)$. We can then see that $\grad p_3(d) = 0 \Rightarrow p_3(d) = 0$, and therefore the TOC is a stronger condition than the TONC. Indeed, the FONC, SONC, and TONC together are not sufficient for local optimality of a point for a cubic polynomial; see Example \ref{Ex: no local min}. Intuitively, this is because the FONC, SONC, and TONC together avoid existence of a descent direction for cubic polynomials, but as the proof of Theorem \ref{Thm: TOC} will show, existence of a ``descent parabola'' must also be avoided.
\end{remark}

We will need the following fact from linear algebra for the proof of Theorem \ref{Thm: TOC}.

\begin{lem}\label{Lem: Smallest Nonzero Eigenvalue}
	Let $M \in \Snn$ be a symmetric positive semidefinite matrix and denote its smallest positive eigenvalue by $\lambda_+$. Then if $z \in \cols(M)$ and $\|z\| = 1, z^TMz \ge \lambda_+$.
\end{lem}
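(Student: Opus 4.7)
The plan is to prove this via the spectral decomposition of $M$. Since $M \in \Snn$ is symmetric, it admits an orthonormal eigenbasis $u_1, \ldots, u_n$ with corresponding eigenvalues $\lambda_1, \ldots, \lambda_n \ge 0$ (using that $M$ is psd). First I would recall the standard fact that for a symmetric matrix, the column space equals the orthogonal complement of the null space, and the null space is exactly the span of eigenvectors associated to the zero eigenvalue. Consequently, $\cols(M)$ is spanned by $\{u_i : \lambda_i > 0\}$.

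Next, I would write the unit vector $z \in \cols(M)$ in this eigenbasis, namely $z = \sum_{i:\lambda_i>0} c_i u_i$ with $\sum_{i:\lambda_i>0} c_i^2 = \|z\|^2 = 1$. A direct computation then yields
\[
z^T M z \;=\; \sum_{i:\lambda_i>0} \lambda_i c_i^2 \;\ge\; \lambda_+ \sum_{i:\lambda_i>0} c_i^2 \;=\; \lambda_+,
\]
where the inequality uses the definition of $\lambda_+$ as the smallest positive eigenvalue. This gives the desired bound.

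There is no real obstacle here; the only subtlety is to make sure to justify the identification $\cols(M) = \mathrm{span}\{u_i : \lambda_i > 0\}$, which relies on $M$ being symmetric (so that the range and the orthogonal complement of the null space coincide). Everything else is a one-line calculation, so the proof should be quite short.
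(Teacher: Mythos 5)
Your proof is correct and takes essentially the same approach as the paper: both arguments expand $z$ in an orthonormal eigenbasis of $M$, use that $z \in \cols(M)$ forces the components along zero-eigenvalue eigenvectors to vanish, and then bound $z^TMz = \sum \lambda_i c_i^2 \ge \lambda_+ \sum c_i^2 = \lambda_+$.
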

\begin{proof}
	Suppose $M$ has eigenvalues $\lambda_1 \ge \lambda_2 \ge \cdots \ge \lambda_k > \lambda_{k+1} = \cdots = \lambda_n = 0$ (so $\lambda_+ = \lambda_k$). Let $v_1, \ldots, v_n$ be a set of corresponding mutually orthogonal unit-norm eigenvectors of $M$. Observe that any $z \in \cols(M)$ can be written as $z = \sum_{i=1}^n \alpha_iv_i$, for some scalars $\alpha_i$ with $\alpha_i = 0$ for $i =k+1, \ldots, n$. This is because the column space is orthogonal to the null space, and the eigenvectors corresponding to zero eigenvalues span the null space.
	
	Since $v_1, \ldots, v_k$ are mutually orthogonal unit vectors, we have
	$$z^TMz = \left(\sum_{i=1}^k \alpha_iv_i\right)^T \left(\sum_{i=1}^k \lambda_iv_iv_i^T\right) \left(\sum_{i=1}^k \alpha_iv_i\right) = \sum_{i=1}^k \alpha_i^2\lambda_i v_i^Tv_iv_i^Tv_i = \sum_{i=1}^k \alpha_i^2 \lambda_i,$$
	and
	$$1 = \|z\|^2 = \left(\sum_{i=1}^k \alpha_iv_i\right)^T\left(\sum_{i=1}^k \alpha_iv_i\right) = \sum_{i=1}^k \alpha_i^2v_i^Tv_i = \sum_{i=1}^k \alpha_i^2.$$
	These two equations combined imply that $z^TMz \ge \lambda_k = \lambda_+$.
\end{proof}

\begin{proof}[Proof (of Theorem \ref{Thm: TOC})]
	As any local minimum must satisfy the FONC and SONC, it suffices to show that a second-order point is a local minimum for a cubic polynomial if and only if it also satisfies the TOC.
	
	We first observe that for any second-order point $\xbar$, scalars $\alpha$ and $\beta$, and vectors $d \in \nulls(\Hxb)$ and $z \in \Rn$, the following identity holds:
	\begin{equation}\label{Eq: Taylor NC decomposition}
    \begin{aligned}
	p(\xbar + \alpha d+\beta z)	&=  p_3(\alpha d+\beta z) + \frac{1}{2}(\alpha d+\beta z)^T\Hxb(\alpha d+\beta z) + p(\xbar)\\
	&= \beta^3p_3(z) + \frac{\beta^2}{2}z^T\Hess p_3(\alpha d)z + \beta\grad p_3(\alpha d)^Tz + p_3(\alpha d) + \frac{\beta^2}{2} z^T \Hxb z + p(\xbar)\\
	&= \beta^3p_3(z) + \frac{\alpha \beta^2}{2}z^T\Hess p_3(d)z + \alpha^2 \beta\grad p_3(d)^Tz + \alpha^3p_3(d) + \frac{\beta^2}{2} z^T \Hxb z + p(\xbar).\\
	\eaeql
	The first equality follows from (\ref{Eq: cubic taylor}) and the FONC. The second equality follows from the Taylor expansion of $p_3(\alpha d+\beta z)$ around $\alpha d$ and using the fact that $d \in \nulls(\Hxb)$. The last equality follows from homogeneity of $p_3$.
		
	\noindent\textbf{(second-order point) + TOC $\Rightarrow$ local minimum:}
	
	Let $\xbar$ be any second-order point at which the TOC holds. Note that any vector $v \in \Rn$ can be written as $\alpha d + \beta z$ for some scalars $\alpha$ and $\beta$, and unit vectors $d \in \nulls(\Hxb)$ and $z \in \cols(\Hxb)$ \edit{(which are all unique up to sign)}. Since from the TOC we have $\grad p_3(d) = 0$ (which also implies that $p_3(d) = 0$, as seen e.g. by Euler's theorem for homogeneous functions mentioned above), the identity in (\ref{Eq: Taylor NC decomposition}) reduces to \beq
	\label{Eq: pxbar - p}
	p(\xbar + v) - p(\xbar) =\beta^2 \left(\beta p_3(z) + \frac{\alpha}{2} z^T \Hess p_3(d)z + \frac{1}{2} z^T \Hxb z\right).
	\eeq	

	Let $\lambda > 0$ be the smallest nonzero eigenvalue of $\Hxb$. From Lemma \ref{Lem: Smallest Nonzero Eigenvalue} we have that $z^T \Hxb z \ge~\lambda$. Thus, if $\alpha$ and $\beta$ satisfy 
	\beq\label{Eq: Min Radius}| \alpha| + | \beta| \le \lambda \cdot \left(\underset{\|z\|=1, \|d\|=1}{\max} \max \{z^T \Hess p_3(d)z, 2p_3(z)\}\right)^{-1},\eeq the expression on the right-hand side of (\ref{Eq: pxbar - p}) is nonnegative. Because the set $\{\|z\|=1\} \cap \{\|d\|=1\}$ is compact and $p_3$ is continuous and odd, the quantity $$\gamma \defeq \underset{\|z\|=1, \|d\|=1}{\max} \max \{z^T \Hess p_3(d)z, 2p_3(z)\}$$ is finite and nonnegative, and thus $\lambda/\gamma$ is positive (or potentially $+\infty$). Finally, note that for any $v \in \Rn$ such that $\|v\| \le \edit{\frac{\sqrt{2}}{2}}(\lambda/\gamma)$, the corresponding $\alpha$ and $\beta$ satisfy (\ref{Eq: Min Radius}), and thus $p(\xbar + v) - p(\xbar) \ge 0$ as desired.\\\\
	\textbf{Local minimum $\Rightarrow$ TOC:}
	
	Note that if $\xbar$ is a local minimum, then we must have $p_3(d) = 0$ whenever $d \in \nulls(\Hxb)$ (see Remark \ref{Rem: TONC}). We also assume that $p_3$ is not the zero polynomial, as then the TOC would be automatically satisfied. 
	
	Now suppose for the sake of contradiction that there exists a vector $\hat{d} \in \nulls(\Hxb)$ such that $\grad p_3(\hat{d}) \ne 0$. Consider the sequence of points given by
	\begin{equation}\label{Eq: Descent Parabola}
	\hat{x}_i \defeq \xbar + \alpha_i \hat{d} + \beta_i z,
	\eeq
	where
	$$z = -\frac{\grad p_3(\hat{d})}{\|\grad p_3(\hat{d})\|}, \alpha_i = \frac{1}{i}\sqrt{\frac{z^T \Hxb z}{|\grad p_3(\hat{d})^Tz|}}, \beta_i = \frac{1}{i^2}.$$
	Observe that $\hat{x}_i \to \xbar$ as $i \to \infty$.
	From (\ref{Eq: Taylor NC decomposition}), we have
	$$p(\xbar + \alpha_i \hat{d}+\beta_i z) - p(\xbar)
	= p_3(z)\beta_i^3 + \frac{1}{2}z^T\Hess p_3(\hat{d})z \alpha_i \beta_i^2 + \grad p_3(\hat{d})^Tz \alpha_i^2 \beta_i + \frac{1}{2} z^T \Hxb z \beta_i^2.$$
	Note that because $\alpha_i \propto \sqrt{\beta_i}$, the third and fourth terms of the right-hand side of the above expression will be the dominant terms as $i \to \infty$. For our choices of $\alpha_i$ and $\beta_i$, the sum of these two dominant terms simplifies to $-\frac{1}{2i^4}z^T\Hxb z$. Observe that for any $w \in \nulls(\Hxb)$ and any $\alpha \in \R, p_3(\hat{d}+\alpha w) = 0$. Since the gradient of $p_3$ is orthogonal to its level sets, we must then have $\grad p_3(\hat{d})^Tw = 0$ for any $w \in \nulls(\Hxb)$. Thus, $\grad p_3(\hat{d})$ is in the orthogonal complement of $\nulls(\Hxb)$, i.e. in $\cols (\Hxb)$, and hence $z^T \Hxb z > 0$. Thus, for any sufficiently large $i$, $p(\hat{x}_i) < p(\xbar)$, and so $\xbar$ is not a local minimum.
	
\end{proof}

\begin{remark} Note that the points $\hat{x}_i$ constructed in (\ref{Eq: Descent Parabola}) trace a parabola as $i$ ranges from $-\infty$ to $+\infty$. Thus as a corollary of the proof of Theorem~\ref{Thm: TOC}, we see that if a point $\xbar \in \Rn$ is not a local minimum of a cubic polynomial $p: \Rn \to \R$, then there must exist a ``descent parabola'' that certifies that; i.e. a parabola $q(t): \R \to \Rn$ and a scalar $\bar{\alpha}$ satisfying $q(0) = \xbar$ and $p(q(\alpha)) < p(\xbar)$ for all $\alpha \in (0, \bar{\alpha})$.
\end{remark}

Theorem~\ref{Thm: TOC} gives rise to the following algorithmic result.


\begin{theorem}\label{Thm: Checking Min Poly Time}
	Local minimality of a point $\xbar \in \Rn$ for a cubic polynomial $p: \Rn \to \R$ can be checked in polynomial time.
\end{theorem}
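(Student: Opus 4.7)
My plan is to check each of the three conditions of Theorem~\ref{Thm: TOC} in polynomial time. The FONC $\grad p(\xbar) = 0$ can be verified by evaluating the gradient at $\xbar$, and the SONC $\Hxb \succeq 0$ can be verified by any of the classical polynomial-time tests for positive semidefiniteness (e.g., Gaussian pivoting on the main diagonal of $\Hxb$, as mentioned in Section~\ref{Sec: Introduction}). The real work lies in showing that the TOC---an a priori infinite family of conditions indexed by vectors in $\nulls(\Hxb)$---can also be checked in polynomial time.

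The key observation is that $\nulls(\Hxb)$ is a linear subspace, and using the parametrization of cubic polynomials from Section~\ref{SSec: Cubic Preliminaries}, the $j$-th entry of $\gp3d$ equals $\frac{1}{2} d^T H_j d$, where the symmetric matrices $H_1,\ldots,H_n$ are read directly from the coefficients of $p$ as in (\ref{Eq: Cubic Poly Form}). I would first compute, in polynomial time via Gaussian elimination, a matrix $V \in \R^{n \times k}$ whose columns form a basis of $\nulls(\Hxb)$. Every $d \in \nulls(\Hxb)$ can then be written as $d = Vc$ for some $c \in \R^k$, so the $j$-th entry of $\grad p_3(Vc)$ becomes the quadratic form $\tfrac{1}{2} c^T (V^T H_j V) c$ in the variable $c$. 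Since $V^T H_j V$ is already symmetric, this form vanishes identically in $c$ if and only if $V^T H_j V = 0$. Hence the TOC reduces to the finite collection of matrix equalities $V^T H_j V = 0$ for $j = 1,\ldots,n$.

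Since the matrices $H_j$ are part of the input, and $V$ has been computed, each of the $n$ products $V^T H_j V$ can be formed explicitly in polynomial time, and each can be tested against the zero matrix in polynomial time. This yields a polynomial-time algorithm for verifying all three conditions, and by Theorem~\ref{Thm: TOC} this is equivalent to verifying local minimality. I do not anticipate a serious obstacle in this plan; the only step that requires a moment of thought is the passage from the TOC as a universally quantified statement over $\nulls(\Hxb)$ to a finite linear-algebraic check, which follows cleanly from the linearity of $\nulls(\Hxb)$ together with the quadratic structure of $\grad p_3$.
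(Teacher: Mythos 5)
Your proposal is correct and takes essentially the same route as the paper's proof: verify the FONC and SONC by classical linear-algebraic tests, compute a rational basis $V$ of $\nulls(\Hxb)$ via Gaussian elimination, and reduce the TOC to a polynomially-sized check by substituting $d = Vc$. The only (cosmetic) difference is that you phrase the TOC check as the matrix equalities $V^T H_j V = 0$ for $j=1,\ldots,n$, whereas the paper checks that all coefficients of the quadratic polynomial $\grad p_3(Vc)$ in $c$ vanish --- but these are the same condition, since those coefficients are precisely the entries of $V^T H_j V$.
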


\begin{proof}
	In view of Theorem \ref{Thm: TOC}, we show that the FONC, SONC, and TOC can be checked in polynomial time (in the Turing model of computation). Checking that the gradient of $p$ vanishes at $\xbar$ and that the Hessian at $\xbar$ is positive semidefinite can be done in polynomial time as explained in Section~\ref{Sec: Introduction}. We give the following polynomial-time algorithm for checking the TOC:
	
	\begin{algorithm}[H]
		\caption{Algorithm for checking the TOC.}\label{Alg: Check Local Min}
		\begin{algorithmic}[1]
			\State {\bf Input:} Coefficients of a cubic polynomial $p: \Rn \to \R$, a point $\xbar \in \Rn$
			\State Compute $\Hxb$
			\State Compute a rational basis $\{v_1, \ldots, v_k\}$ for the null space of $\Hxb$
			\State Check if coefficients of $g(\lambda) \defeq \grad p_3 (\sum_{i=1}^k \lambda_i v_i)$ are all zero
    		\State \quad \texttt{{\bf if}} YES
    			\State \quad \quad $\xbar$ is a local minimum of $p$
    		\State \quad \texttt{{\bf if}} NO
    			\State \quad\quad $\xbar$ is a not local minimum of $p$
		\end{algorithmic}
	\end{algorithm}
	
	Note that the entries of the function $g: \R^k \to \Rn$ that appears in this algorithm are homogeneous quadratic polynomials in $\lambda \defeq (\lambda_1, \ldots, \lambda_k)$, where $k$ is the dimension of $\nulls(\Hxb)$. For the TOC to hold, $g$ must be zero for all $\lambda \in \R^k$, which happens if and only if all coefficients of every entry of $g$ are zero.
	
	A rational basis for the null space of a symmetric matrix can be computed in polynomial time, for example through the Bareiss algorithm \cite{bareiss1968sylvester}. For completeness, we give a less efficient but also polynomial-time algorithm which solves a series of linear systems. The first linear system finds a nonzero vector $v_1 \in \Rn$ such that $\Hxb^T v_1 = 0$. The successive linear systems solve for nonzero vectors $v_i \in \Rn$ such that $\Hxb^T v_i = 0, v_j^Tv_i = 0, \forall j = 1, \ldots, i-1$. To ensure nonzero solutions, some entry of the vector is fixed to 1, and if the system is infeasible, the next entry is fixed to 1 and the system is re-solved. Once the only feasible vector is the zero vector, the basis is complete.
	
	The next step is to compute the coefficients of $g$. To do this, one can first compute the coefficients of $\grad p_3$. There are $n \times {n+1 \choose 2}$ coefficients to compute, and each is a coefficient of $p_3$, multiplied by 1, 2, or 3. If the $m$-th entry of $\grad p_3$ is given by $\sum_{i=1}^n \sum_{j\ge i}^n c_{ij}x_ix_j$, then the $m$-th entry of $g$ is equal to $g_m(\lambda) = \sum_{a=1}^n \sum_{b=1}^n (\sum_{i=1}^n \sum_{j\ge i}^n c_{ij}(v_a)_i(v_b)_j)\lambda_a\lambda_b$, where the vectors $\{v_i\}$ are our rational basis for $\nulls(\Hxb)$. Observe that $g_m$ is a polynomial in $\lambda$ whose coefficients can be computed with a polynomial number of additions and multiplications over polynomially-sized scalars, and thus checking if all these coefficients are zero for every $m$ can be done in polynomial time.
	
	\end{proof}
	
	Let us end this subsection by also giving an efficient characterization of strict local minima of cubic polynomials.
	
	\begin{cor}\label{Cor: Strict Local Min}
		A point $\xbar \in \Rn$ is a strict local minimum of a cubic polynomial $p: \Rn \to \R$ if and only if
		\begin{itemize}
			\item $\gxb = 0,$
			\item $\Hxb \succ 0.$
		\end{itemize}
	\end{cor}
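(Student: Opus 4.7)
The sufficiency direction is simply the classical second-order sufficient condition for optimality, which holds for any twice continuously differentiable function and in particular for cubic polynomials: if $\gxb = 0$ and $\Hxb \succ 0$, then the second-order Taylor expansion shows $\xbar$ is a strict local minimum. So the plan is to focus on the necessity direction, namely that if $\xbar$ is a strict local minimum of $p$, then $\gxb = 0$ and $\Hxb \succ 0$. The first of these conditions is the FONC and holds at any local minimum. The content is in upgrading $\Hxb \succeq 0$ (which we already get from the SONC, or from Theorem~\ref{Thm: TOC}) to $\Hxb \succ 0$.

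For this, I would argue by contradiction: suppose $\xbar$ is a strict local minimum but $\Hxb$ has a nontrivial null space, and pick any nonzero $d \in \nulls(\Hxb)$. By Theorem~\ref{Thm: TOC} applied to the local minimum $\xbar$, we have $\grad p_3(d) = 0$, and Euler's theorem for homogeneous functions (as used in Remark~\ref{Rem: TONC}) then yields $p_3(d) = \tfrac{1}{3}d^T \grad p_3(d) = 0$ as well. Substituting into the cubic Taylor identity (\ref{Eq: cubic taylor}) along the line through $\xbar$ in direction $d$, and using $\gxb = 0$ together with $d \in \nulls(\Hxb)$, gives
\[
p(\xbar + \lambda d) = p_3(d)\lambda^3 + \tfrac{1}{2}d^T\Hxb d\,\lambda^2 + \gxb^T d\,\lambda + p(\xbar) = p(\xbar)
\]
for every $\lambda \in \R$. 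In particular there are points arbitrarily close to $\xbar$ (but distinct from it) at which $p$ equals $p(\xbar)$, contradicting strict local minimality.

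I do not anticipate any real obstacle, since Theorem~\ref{Thm: TOC} and Remark~\ref{Rem: TONC} have already done the heavy lifting; the argument is essentially a one-line specialization. The only thing worth being slightly careful about is invoking both $\grad p_3(d) = 0$ (the TOC) and $p_3(d) = 0$ (the TONC, obtained from the TOC via Euler's identity) to make every term in the univariate restriction vanish; this is the step that uses cubicness of $p$ in an essential way.
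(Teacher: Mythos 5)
Your proof is correct, and both you and the paper argue by restricting $p$ to the line $\xbar + \alpha d$ for a nonzero $d$ in $\nulls(\Hxb)$ and then observing that the resulting univariate cubic cannot have a strict local minimum at $\alpha = 0$. The difference lies in how that univariate restriction is handled. You invoke the full third-order characterization (Theorem~\ref{Thm: TOC}) to deduce $\grad p_3(d) = 0$, then use Euler's identity to get $p_3(d) = 0$, so that the restriction is constant, and constancy immediately rules out strict local minimality. The paper avoids Theorem~\ref{Thm: TOC} entirely: it observes that (\ref{Eq: cubic taylor}) together with the FONC and $d \in \nulls(\Hxb)$ already gives $p(\xbar + \alpha d) = p(\xbar) + p_3(d)\alpha^3$, and then notes that a univariate cubic of this form has no strict local minimum at $\alpha = 0$ regardless of whether $p_3(d)$ is zero (constant restriction) or nonzero (odd function, so values below $p(\xbar)$ arbitrarily close). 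In other words, your detour through TOC is sound but unnecessary; it buys you the stronger intermediate conclusion that the restriction is constant, whereas the paper's lighter argument only needs FONC, SONC, and the Taylor identity and is therefore more self-contained.
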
	
	
	\begin{proof}
	    The fact that these two conditions are sufficient for local minimality is immediate from the SOSC. To show the converse, in view of the FONC, we only need to show that positive definiteness of the Hessian is necessary. Suppose for the sake of contradiction that for some nonzero vector $d \in \Rn$, we have $d^T \Hxb d = 0$ (note that in view of the SONC, we cannot have $d^T \Hxb d< 0$). From (\ref{Eq: cubic taylor}), we have $p(\xbar+\alpha d) = p(\xbar) + p_3(d)\alpha^3$. Hence, $\alpha = 0$ is not a strict local minimum of the univariate polynomial $p(\xbar + \alpha d)$, and so $\xbar$ is not a strict local minimum of $p$.
	\end{proof}
	
	\begin{cor}\label{Cor: Check Strict Local Min}
	Strict local optimality of a point $\xbar \in \Rn$ for a cubic polynomial $p: \Rn \to \R$ can be checked in polynomial time.
	\end{cor}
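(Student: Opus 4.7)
The plan is to invoke Corollary~\ref{Cor: Strict Local Min} to reduce the problem of verifying strict local minimality of $\xbar$ to two tasks: checking that $\grad p(\xbar) = 0$ and checking that $\Hess p(\xbar) \succ 0$. Both are standard polynomial-time checks, so the proof is essentially a short appeal to this characterization together with known linear-algebra routines.

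First, I would compute $\grad p(\xbar)$ by evaluating the $n$ entries of the gradient at $\xbar$; since each entry is a polynomial in the coefficients of $p$ and the entries of $\xbar$, this uses polynomially many arithmetic operations on polynomially-sized rationals, and verifying that the resulting vector is zero is immediate. Next, I would form the Hessian $\Hess p(\xbar)$, which is an affine function of $\xbar$ and hence computable in polynomial time. To test positive definiteness, one can compute the leading principal minors and verify that they are all strictly positive (Sylvester's criterion), or alternatively attempt a Cholesky factorization; either procedure runs in polynomial time in the Turing model over the rationals.

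The only thing to be careful about is that the quantities involved (Hessian entries, minors, etc.) have bit lengths polynomial in the input size. Since the Hessian entries are affine in the rational entries of $\xbar$ and the rational coefficients of $p$, and since leading principal minors and Cholesky pivots involve only polynomially many additions, multiplications, and divisions on such rationals, the bit lengths grow polynomially; this is the same kind of analysis used for checking positive semidefiniteness as discussed in Section~\ref{Sec: Introduction}. I do not anticipate a genuine obstacle here: the result is essentially an algorithmic corollary of Corollary~\ref{Cor: Strict Local Min}, in the same spirit as how Theorem~\ref{Thm: Checking Min Poly Time} followed from Theorem~\ref{Thm: TOC}, but easier since no analog of the TOC step is needed.
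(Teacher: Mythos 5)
Your proposal is correct and follows essentially the same route as the paper's own proof: invoke Corollary~\ref{Cor: Strict Local Min} to reduce the question to checking $\grad p(\xbar)=0$ and $\Hess p(\xbar)\succ 0$, and then test positive definiteness via leading principal minors (Sylvester's criterion), which is a polynomial-time computation. Your added remarks on bit-length growth are a sensible elaboration but do not change the argument.
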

	\begin{proof}
		This follows from the characterization in Corollary~\ref{Cor: Strict Local Min}. Checking the FONC is straightforward as before. As explained in Section~\ref{Sec: Introduction}, to check that $\Hxb$ is positive definite, one can equivalently check that all $n$ leading principal minors of $\Hxb$ are positive. This procedure takes polynomial time since determinants can be computed in polynomial time.
	\end{proof}

	\subsection{Examples}\label{SSec: Local Min Examples}
	
	We give a few illustrative examples regarding the application and context of Theorem \ref{Thm: TOC}.
	
	\begin{figure}[h]
	\includegraphics[height=.28\textheight,keepaspectratio]{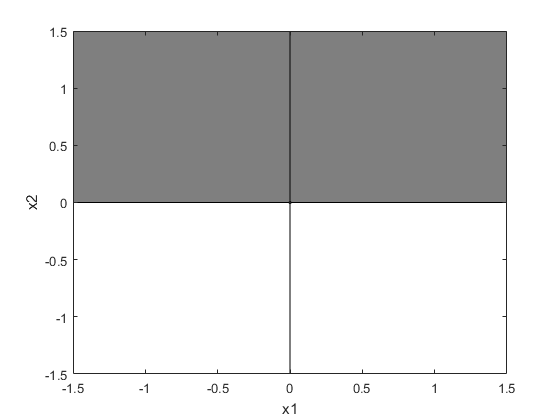}
	\includegraphics[height=.28\textheight,keepaspectratio]{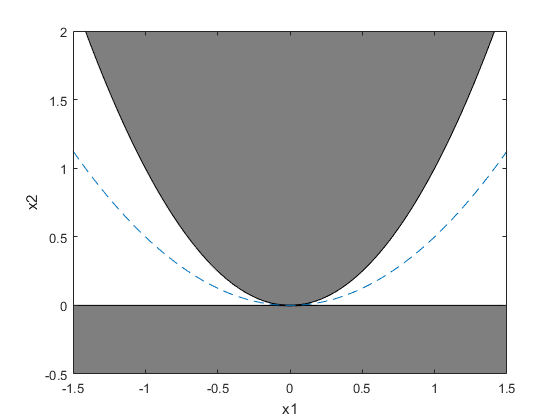}
	\caption{Contour plots of $x_1^2x_2$ (left) and $x_2^2 - x_1^2x_2$ (right) from Examples \ref{Ex: local min} and \ref{Ex: no local min}. The polynomials are zero on the black lines, positive on the gray regions, and negative on the white regions. The dashed line in the right-side figure denotes a descent parabola at the origin.}
	\label{Fig: Local Min contours}
	\end{figure}
	
	\begin{example}\label{Ex: local min}
		{\bf A cubic polynomial with local minima}
	
	Consider the polynomial $p(x_1,x_2) = x_1^2x_2$. By inspection (see Figure \ref{Fig: Local Min contours}), one can see that points of the type $\{(x_1,x_2)\ |\ x_1 = 0, x_2 > 0\}$ are local minima of $p$, as $p$ is nonnegative when $x_2 > 0$, zero whenever $x_1 = 0$, and positive whenever $x_2 > 0$ and $x_1 \ne 0$. As a sanity check, we use Theorem~\ref{Thm: TOC} to verify that the point $(0,1)$ is a local minimum of $p$ (the same reasoning applies to all other local minima).
	
	Through straightforward computation, we find
	
	$$\gx = \bvec 2x_1x_2 \\ x_1^2 \evec, \grad p_3(x) = \bvec 2x_1x_2 \\ x_1^2 \evec, \Hx = \bmat 2x_2 & 2x_1 \\ 2x_1 & 0 \emat.$$
	We can see that the FONC and SONC are satisfied at $(0,1)$.	The null space of $\Hess p(0,1)$ is spanned by $(0,1)$. We have
	$$\grad p_3\left(\alpha \bvec 0 \\ 1 \evec\right) = \bvec 2(0)(\alpha) \\ (0)^2 \evec = 0,$$
	which shows that the TOC is satisfied, verifying that $(0,1)$ is a local minimum of $p$.
	
	One can also verify that $\{(x_1,x_2)\ |\ x_1 = 0, x_2 > 0\}$ are the only local minima. Indeed, the critical points of $p$ are those where $x_1 = 0$, and the second-order points are those where $x_1 = 0$ and $x_2 \ge 0$. To see that $(0,0)$ is not a local minimum, observe that $(1,1) \in \nulls (\Hess p(0,0))$, but $\grad p_3(1,1) = (2,1) \ne 0$, and thus the TOC is violated.
	\end{example}
	
	\begin{example}\label{Ex: no local min}
		{\bf A cubic polynomial with no local minima}
	
	We use Theorem~\ref{Thm: TOC} to show that the polynomial $p(x_1,x_2) = x_2^2 - x_1^2x_2$ has no local minima. We have
	
	$$\gx = \bvec -2x_1x_2 \\ 2x_2 - x_1^2 \evec, \grad p_3(x) = \bvec -2x_1x_2 \\ -x_1^2 \evec, \Hx = \bmat -2x_2 & -2x_1 \\ -2x_1 & 2 \emat.$$
	Observe that $(0,0)$ is the only second-order point of $p$. The null space of $\Hess p(0,0)$ is spanned by $(1,0)$. We have
	
	$$\grad p_3\left(\alpha \bvec 1 \\ 0 \evec\right) = \bvec -2(\alpha)(0) \\ -(\alpha)^2 \evec = \bvec 0 \\ -\alpha^2 \evec \ne 0,$$
	which shows that the TOC is violated, and hence $(0,0)$ is not a local minimum. Note that the TONC is in fact satisfied at $(0,0)$, since $p_3(\alpha, 0) = 0$ for any scalar $\alpha$.
	
It is also interesting to observe that there are no descent directions for $p$ at $(0,0)$ (this is implied, e.g., by satisfaction of the TONC, along with the FONC and SONC). However, we can use the proof of Theorem \ref{Thm: TOC} to compute a descent parabola, thereby more explicitly demonstrating that $(0,0)$ is not a local minimum. The column space of $\Hess p(0,0)$ is spanned by $(0,1)$. Then, following the proof of Theorem \ref{Thm: TOC} with $z = (0,1)$ and $\hat{d} = (1,0)$, we have $z^T \Hess p(0,0) z = 2$ and $|\grad p_3(\hat{d})^T z| = 1$. The parabola prescribed is then the set $\{(x_1, x_2)\ |\ x_2 = \frac{1}{2} x_1^2\}$. Indeed, one can now verify that except at $(0,0)$, $p$ is negative on the entire parabola; see the dashed line in Figure \ref{Fig: Local Min contours}.
	\end{example}
	
	\begin{example}\label{Ex: TOC not necessary}
	{\bf A quartic polynomial with a local minimum that does not satisfy the TOC}
	
	
	We show in this example that for polynomials of degree higher than three, the TOC is not a necessary condition for local minimality. Consider the polynomial $p(x_1,x_2) = 2x_1^4 + 2x_1^2x_2 + x_2^2$. The point $(0,0)$ is a local minimum, as $p(0,0) = 0$ and $p(x_1,x_2) = x_1^4 + (x_1^2 + x_2)^2$ is nonnegative. However, the Hessian of $p$ at $(0,0)$ is
	$$\Hess p(0,0) = \bmat 0 & 0 \\ 0 & 2 \emat,$$
	which has a null space spanned by $(1,0)$. We observe that $\grad p_3(x_1,x_2) = \bmat 4x_1x_2 \\ 2x_1^2 \emat$ does not vanish on this null space, as it evaluates, for example, to $(0,2)$ at $(1,0)$.
	\end{example}
	
	\section{On the Geometry of Local Minima of Cubic Polynomials}\label{Sec: Geometry}
	
	We have shown that deciding local minimality of a given point for a cubic polynomial is a polynomial-time solvable problem. We now turn our attention to the remaining unresolved entries in Table~\ref{Table: Complexity Existence} from Section~\ref{Sec: Introduction}, which are on the problems of deciding whether a cubic polynomial has a second-order point, a local minimum, or a strict local minimum. In Sections~\ref{Sec: Complexity} and \ref{Sec: Finding Local Min}, we will show that these problem can all be reduced to semidefinite programs of tractable size. In the current section, we present a number of geometric results about local minima and second-order points of cubic polynomials which are used in those sections, but are possibly of independent interest.
	
	For the remainder of this paper, we use the notation $SO_p$ to denote the set of second-order points of a polynomial $p$, $LM_p$ to denote the set of its local minima, and $\bar{S}$ to denote the closure of a set $S$.
	
	\subsection{Convexity of the Set of Local Minima}\label{SSec: Local Min Convex}
	
	We begin by showing that for any cubic polynomial $p$, the set $LM_p$ is convex. We go through two lemmas; the first is a simple algebraic observation, and the second contains information about some critical points. Recall that the Hessian of a cubic polynomial $p$ written in the form of (\ref{Eq: Cubic Poly Form}) is given by $\sum_{i=1}^n x_iH_i + Q$. Furthermore, its gradient is given by $\frac{1}{2}\sum_{i=1}^n x_iH_ix + Qx + b$, or equivalently a vector whose $i$-th entry is $x^TH_ix + e_i^TQx + b_i$.
	
	\begin{lem}\label{Lem: Hessian switch}
		Let $H_1, \ldots, H_n \subseteq \Snn$ satisfy (\ref{Eq: Valid Hessian}). Then for any two vectors $y, z \in \Rn$, $$\left(\sum_{i=1}^n y_iH_i\right)z = \left(\sum_{i=1}^n z_iH_i\right)y.$$
	\end{lem}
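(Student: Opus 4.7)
The plan is to prove the identity by a direct entrywise computation, using the fact that condition~(\ref{Eq: Valid Hessian}) is equivalent to saying that the third-order tensor $T_{ijk} \defeq (H_i)_{jk}$ is fully symmetric under all permutations of its indices. Since both sides of the claimed identity are vectors in $\Rn$, it suffices to fix an arbitrary $k \in \{1,\ldots,n\}$ and show that their $k$-th entries agree.

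First, I would write out the $k$-th entry of each side explicitly:
$$\left[\left(\sum_{i=1}^n y_iH_i\right)z\right]_k = \sum_{i=1}^n \sum_{j=1}^n y_i (H_i)_{kj} z_j, \qquad \left[\left(\sum_{i=1}^n z_iH_i\right)y\right]_k = \sum_{i=1}^n \sum_{j=1}^n z_i (H_i)_{kj} y_j.$$
Next, I would apply condition~(\ref{Eq: Valid Hessian}) together with the symmetry of each $H_i$ to obtain the identity $(H_i)_{kj} = (H_j)_{ki}$: the first equality uses symmetry of $H_i$ to swap $k$ and $j$, then~(\ref{Eq: Valid Hessian}) swaps the ``outer'' index $i$ with the first ``inner'' index, and a final use of symmetry reorders the remaining indices. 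Substituting this into the left-hand side and then swapping the dummy summation indices $i \leftrightarrow j$ yields $\sum_{i,j} z_i (H_i)_{kj} y_j$, which is exactly the $k$-th entry of the right-hand side.

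There is no real obstacle here — the statement is a routine consequence of the full symmetry of the tensor $T$, and the only thing to be careful about is applying condition~(\ref{Eq: Valid Hessian}) with the correct permutation of indices before relabeling the dummies. An essentially equivalent and perhaps conceptually cleaner way to present the argument is to observe that the trilinear expression $\sum_{i,j} T_{kij}\, y_i z_j$ is symmetric in $y$ and $z$ because $T_{kij} = T_{kji}$ by the symmetry of $T$; this observation immediately gives the lemma.
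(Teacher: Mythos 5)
Your proof is correct and follows essentially the same route as the paper's: fix a coordinate $k$, expand both sides as double sums, and use condition~(\ref{Eq: Valid Hessian}) together with the symmetry of each $H_i$ to move the ``outer'' index, then relabel dummies. The closing remark about the symmetric trilinear form $\sum_{i,j} T_{kij}\, y_i z_j$ is a nice, slightly more conceptual restatement of the same computation, but not a different argument.
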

	\begin{proof}
		Observe that for any index $k \in \{1, \ldots, n\}$, we have
		\baeq
		\left(\left(\sum_{i=1}^n y_iH_i\right)z\right)_k &= \sum_{i=1}^n \sum_{j=1}^n (H_i)_{kj}y_iz_j\\
		&= \sum_{i=1}^n \sum_{j=1}^n (H_j)_{ki}y_iz_j\\
		&= \left(\left(\sum_{j=1}^n z_jH_j\right)y\right)_k,
		\eaeq
		where the second equality follows from (\ref{Eq: Valid Hessian}).
	\end{proof}
	
	\begin{lem}\label{Lem: Gradient Invariance}
		Let $\xbar \in \Rn$ be a local minimum of a cubic polynomial $p: \Rn \to \R$, and let $d~\in~\nulls(\Hxb)$. Then for any scalar $\alpha$, $\xbar +\alpha d$ is a critical point of $p$.
	\end{lem}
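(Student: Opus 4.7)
The plan is to compute $\grad p(\xbar + \alpha d)$ directly as a polynomial in $\alpha$ and verify that every coefficient vanishes. I would parametrize $p$ as in (\ref{Eq: Cubic Poly Form}) and write $M(x) \defeq \sum_{i=1}^n x_i H_i$, so that $\grad p(x) = \tfrac{1}{2} M(x) x + Qx + b$ and $\Hess p(x) = M(x) + Q$. Expanding $\grad p(\xbar + \alpha d)$ then produces a constant term, a term linear in $\alpha$, and a term quadratic in $\alpha$, which I handle separately.

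The constant term is exactly $\grad p(\xbar)$, which is zero by the FONC (since $\xbar$ is a local minimum). The $\alpha$-coefficient works out to $\tfrac{1}{2}\bigl(M(\xbar) d + M(d)\xbar\bigr) + Q d$. Here Lemma~\ref{Lem: Hessian switch} is the crucial simplification: it gives $M(\xbar) d = M(d)\xbar$, so the $\alpha$-coefficient collapses to $(M(\xbar) + Q) d = \Hxb\,d$, which is zero by the assumption $d \in \nulls(\Hxb)$.

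The $\alpha^2$-coefficient is $\tfrac{1}{2} M(d) d$. To dispose of it, I would argue that this vector is precisely $\grad p_3(d)$: indeed, since $p_3(x) = \tfrac{1}{6}\sum_i x_i (x^T H_i x)$, a short computation using the symmetry relation (\ref{Eq: Valid Hessian}) shows that the $k$-th entry of $\grad p_3(x)$ equals $\tfrac{1}{2} x^T H_k x$, which is the $k$-th entry of $\tfrac{1}{2} M(x) x$. Thus $\tfrac{1}{2} M(d) d = \grad p_3(d)$, and this vanishes by the TOC part of Theorem~\ref{Thm: TOC} applied at the local minimum $\xbar$, since $d \in \nulls(\Hxb)$. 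Assembling the three pieces gives $\grad p(\xbar + \alpha d) = 0$ for every scalar $\alpha$, which is the claim.

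The main technical obstacle is the $\alpha^2$-term: unlike the linear term, which just uses the SONC and Lemma~\ref{Lem: Hessian switch}, the quadratic term requires the TOC (not merely the TONC), and recognizing $\tfrac{1}{2} M(d) d$ as $\grad p_3(d)$ via (\ref{Eq: Valid Hessian}) is the step that ties the third-order behavior of $p$ to the hypothesis $d \in \nulls(\Hxb)$. Everything else is bookkeeping once the parametrization (\ref{Eq: Cubic Poly Form}) is in place.
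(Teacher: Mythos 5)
Your proposal is correct and follows essentially the same route as the paper: both parametrize $p$ via (\ref{Eq: Cubic Poly Form}), expand $\grad p(\xbar + \alpha d)$ as a quadratic in $\alpha$, kill the constant term by the FONC, collapse the linear coefficient to $\Hxb\,d = 0$ via Lemma~\ref{Lem: Hessian switch} and the hypothesis $d \in \nulls(\Hxb)$, and recognize the $\alpha^2$-coefficient as $\grad p_3(d)$, which vanishes by the TOC. (One tiny slip in your closing remark: the linear term does not really use the SONC, only the hypothesis $d \in \nulls(\Hxb)$ together with Lemma~\ref{Lem: Hessian switch}.)
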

	\begin{proof}
		Let $p$ be given in our canonical form as $\frac{1}{6}\sum_{i=1}^n x^Tx_iH_ix + \frac{1}{2}x^TQx + b^Tx$. We have
		
		\baeq
		\grad p(\xbar+ \alpha d) &= \left(\frac{1}{2} \sum_{i=1}^n \xbar_iH_i + \alpha d_iH_i\right)(\xbar+\alpha d) + Q(\xbar+\alpha d) + b\\
		&= \left(\frac{1}{2}\sum_{i=1}^n \xbar_iH_i\right)\xbar + Q\xbar + b\\
		&+ \frac{1}{2} \sum_{i=1}^n \alpha d_iH_i\xbar + \frac{1}{2}\sum_{i=1}^n \alpha \xbar_iH_id + \alpha Qd\\
		&+ \frac{\alpha^2}{2}\sum_{i=1}^{n} d_iH_id\\
		&=\grad p(\xbar) + \alpha \Hess p(\xbar)d + \alpha^2\grad p_3(d)\\
		&= 0 + 0 + 0 = 0,\eaeq
		where the third equality follows form Lemma \ref{Lem: Hessian switch}, and the last follows from the FONC and TOC.
	\end{proof}
	
	\begin{theorem}\label{Thm: WLM Convex}
		The set of local minima of any cubic polynomial is convex.
	\end{theorem}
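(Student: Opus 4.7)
Let $\xbar, \ybar \in LM_p$ be two local minima of a cubic polynomial $p$, and set $d \defeq \ybar - \xbar$. For any $\alpha \in [0,1]$, I want to show that the point $z_\alpha \defeq \xbar + \alpha d$ is also a local minimum, which I will do by verifying the three conditions FONC, SONC, TOC of Theorem~\ref{Thm: TOC} at $z_\alpha$. Of course there is nothing to prove if $\xbar = \ybar$, so I assume $d \ne 0$.

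The starting observation (recorded in Section~\ref{SSec: Cubic Preliminaries}) is that the restriction of $p$ to the line through $\xbar$ and $\ybar$ is a univariate polynomial of degree at most three which attains local minima at the two distinct points corresponding to $\xbar$ and $\ybar$. A univariate cubic with two local minima must be constant, so $p$ is constant on the entire line through $\xbar$ and $\ybar$. Expanding via \eqref{Eq: cubic taylor} around $\xbar$ in the direction $d$ and matching coefficients of the powers of the line parameter forces
\[
p_3(d) = 0, \qquad d^T \Hxb d = 0, \qquad \gxb^T d = 0.
\]
Since $\Hxb \succeq 0$ by the SONC at $\xbar$, the middle equation yields $d \in \nulls(\Hxb)$; by the symmetric argument around $\ybar$, we also get $d \in \nulls(\Hess p(\ybar))$.

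I now verify the three conditions at $z_\alpha$. First, the FONC at $z_\alpha$ is immediate from Lemma~\ref{Lem: Gradient Invariance} applied to $\xbar$ with the null-space direction $d$. Second, because the Hessian of a cubic polynomial is affine in $x$, one has
\[
\Hess p(z_\alpha) \;=\; (1-\alpha)\,\Hxb \;+\; \alpha \,\Hess p(\ybar),
\]
which is psd for $\alpha \in [0,1]$ as a nonnegative combination of psd matrices; this gives the SONC. Third, any $w \in \nulls(\Hess p(z_\alpha))$ satisfies $w^T \Hxb w = 0$ and $w^T \Hess p(\ybar) w = 0$ (both terms are nonnegative and sum to zero), so $w \in \nulls(\Hxb)$; applying the TOC at $\xbar$ to $w$ gives $\grad p_3(w) = 0$, establishing the TOC at $z_\alpha$.

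By Theorem~\ref{Thm: TOC}, $z_\alpha \in LM_p$, so $LM_p$ is convex. The step I expect to require the most care is the reduction that pins down the direction $d$ inside $\nulls(\Hxb)$; once this is in hand, the rest is a linear interpolation of the psd Hessian and a null-space-containment argument that propagates the TOC from $\xbar$ to $z_\alpha$.
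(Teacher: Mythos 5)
Your proof is correct and follows essentially the same route as the paper's: restrict $p$ to the line to conclude it is constant there and hence $\ybar-\xbar \in \nulls(\Hxb)$, invoke Lemma~\ref{Lem: Gradient Invariance} for the FONC, use affinity of the Hessian for the SONC, and propagate the TOC from $\xbar$ via the inclusion $\nulls(\Hess p(z_\alpha)) \subseteq \nulls(\Hxb)$. The only cosmetic difference is that you derive this null-space inclusion explicitly from the vanishing of a nonnegative sum of quadratic forms, whereas the paper cites it as a standard fact about sums of psd matrices; the extra observations you record (such as $d \in \nulls(\Hess p(\ybar))$) are true but unused.
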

	\begin{proof}
		If for some cubic polynomial $p$, the set $LM_p$ of its local minima is empty or a singleton, the claim is trivially established. Otherwise, let $\xbar, \ybar \in LM_p$ with $\xbar \ne \ybar$. Consider any convex combination $z \defeq \xbar + \alpha(\ybar-\xbar)$, where $\alpha \in (0,1)$. We show that $z$ satisfies the FONC, SONC, and TOC, and therefore by Theorem~\ref{Thm: TOC}, $z \in LM_p$.
		
		Note from (\ref{Eq: cubic taylor}) that the restriction of $p$ to the line passing through $\xbar$ and $\ybar$ is
		$$p(\xbar + \alpha (\ybar - \xbar)) = p_3(\ybar - \xbar)\alpha^3 + \frac{1}{2}(\ybar - \xbar)^T \Hxb (\ybar - \xbar) \alpha^2 + \gxb^T(\ybar - \xbar)\alpha + p(\xbar).$$
		Since this univariate cubic polynomial has two local minima at $\alpha = 0$ and $\alpha = 1$, it must be constant. In particular, the coefficient of $\alpha^2$ must be zero, and because $\Hxb$ is psd, that implies $\ybar-\xbar \in \nulls(\Hxb)$.  Hence, by Lemma \ref{Lem: Gradient Invariance}, the FONC holds at $z$. To show the SONC and TOC at $z$, note that because $\Hess p(x)$ is affine in $x$, $\Hess p(z)$ can be written as a convex combination of $\Hxb$ and $\Hess p(\ybar)$, both of which are psd. The SONC is then immediate. To see why the TOC holds, recall that the null space of the sum of two psd matrices is the intersection of the null spaces of the summand matrices. Thus $\nulls(\Hess p(z)) \subseteq \nulls(\Hxb)$, and the TOC is satisfied.
	\end{proof}
	
	As a demonstration of Theorem \ref{Thm: WLM Convex}, Figure \ref{Fig: Critical nonconvex} shows the critical points and the local minima of the cubic polynomial \beq\label{Eq: Nonconvex CPs}x_1^3 + 3x_1^2x_2 + 3x_1x_2^2 + x_2^3 - 3x_1 - 3x_2.\eeq Note that the critical points form a nonconvex set, while the local minima constitute a convex subset of the critical points.
	
	\begin{figure}[h]
	\centering
	\includegraphics[height=.26\textheight,keepaspectratio]{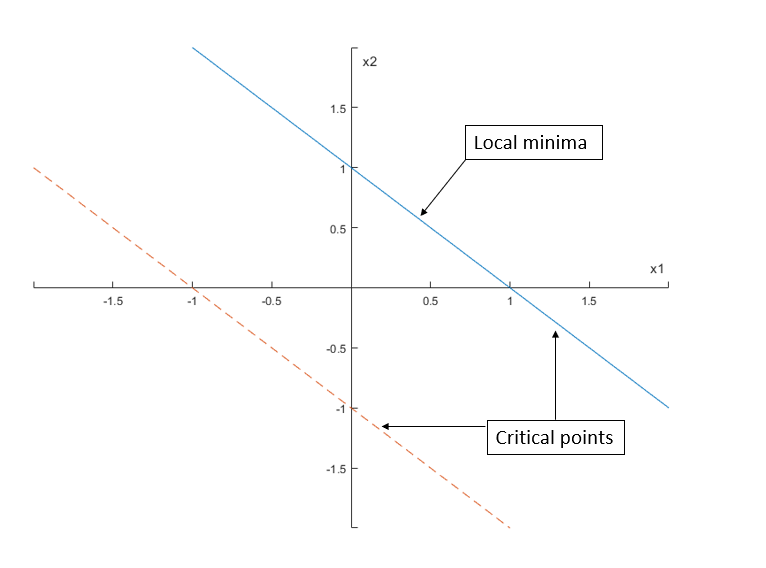}
	\caption{The critical points of the polynomial (\ref{Eq: Nonconvex CPs}). One can verify that the set of critical points is $\{(x_1, x_2)\ |\ (x_1 + x_2)^2 = 1\}$, and that the set of local minima is $\{(x_1, x_2)\ |\ x_1 + x_2 = 1\}$. The points on the dashed line are local maxima.}
	\label{Fig: Critical nonconvex}
	\end{figure}
	
	
	Unlike the above example, $LM_p$ (or even $\overline{LM_p}$ as $LM_p$ is in general not closed) may not be a polyhedral\footnote{Recall that a \emph{polyhedron} is a set defined by finitely many affine inequalities.} set for cubic polynomials. For instance, the polynomial \beq\label{Eq: Circle LM}p(x_1,x_2,x_3,x_4) = -x_1x_3^2 + x_1x_4^2 + 2x_2x_3x_4 + x_3^2 + x_4^2,\eeq has $LM_p = \{x\in \R^4 \ |\ x_1^2+x_2^2 < 1, x_3 = x_4 = 0\}$ (see Figure \ref{Fig: Local Min Spectrahedron}). This is in contrast to quadratic polynomials, whose local minima always form a polyhedral set. We show in Theorem \ref{Thm: WLM Closure Spectrahedron}, however, that $\overline{LM_p}$ is always a spectrahedron\footnote{Recall that a \emph{spectrahedron} is a set of the type $S = \{x \in \Rn|A_0 + \sum_{i=1}^n x_iA_i \succeq 0\}$, where $A_0, \ldots A_n$ are symmetric matrices of some size $m \times m$~\cite{vinzant2014spectrahedron}.}. We first need the following lemma.
	
	\begin{figure}[H]
	\centering
	\includegraphics[height=.26\textheight,keepaspectratio]{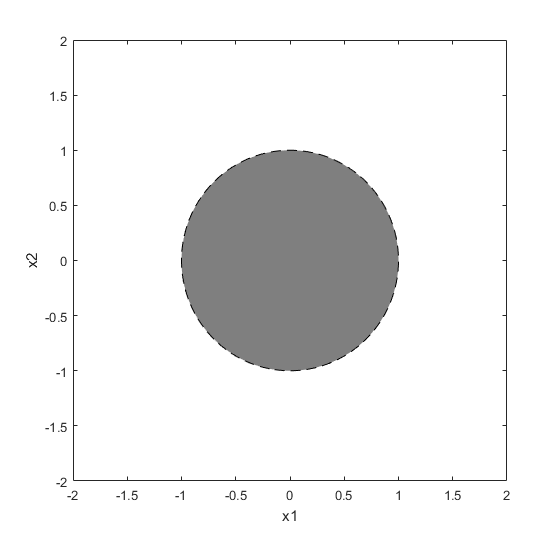}
	\caption{The projection of the set of local minima of the polynomial in (\ref{Eq: Circle LM}) onto the $x_1$ and $x_2$ variables. This example shows that $\overline{LM_p}$ is not always a polyhedral set.}
	\label{Fig: Local Min Spectrahedron}
	\end{figure}
	
	\begin{lem}\label{Lem: constant null}
			For any cubic polynomial $p: \Rn \to \R$, suppose $\xbar \in \Rn$ and $\ybar \in \Rn$ satisfy
			\begin{itemize}
				\item $\xbar \in SO_p$,
				\item $\Hess p(\ybar) \succeq 0$,
				\item $p(\xbar) = p(\ybar)$.
			\end{itemize}
			 Then $p(\xbar + \alpha (\ybar - \xbar)) = p(\xbar)$ for any scalar $\alpha$, and $\ybar - \xbar \in \nulls (\Hxb)$.
		\end{lem}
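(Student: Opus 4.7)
The plan is to restrict $p$ to the line through $\xbar$ and $\ybar$ and exploit the fact that a univariate cubic subject to enough nonnegativity constraints must degenerate to a constant. Define $q(\alpha) \defeq p(\xbar + \alpha(\ybar - \xbar))$. Since $\xbar$ is a second-order point, the FONC gives $\gxb = 0$, so the Taylor expansion (\ref{Eq: cubic taylor}) collapses to
$$q(\alpha) = a\alpha^3 + b\alpha^2 + p(\xbar), \quad \text{where } a \defeq p_3(\ybar - \xbar), \; b \defeq \tfrac{1}{2}(\ybar - \xbar)^T \Hxb (\ybar - \xbar).$$

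I would then extract three constraints on the pair $(a,b)$. First, the SONC at $\xbar$ gives $b \ge 0$. Second, applying the chain rule,
$$q''(1) = (\ybar - \xbar)^T \Hess p(\ybar)(\ybar - \xbar) \ge 0$$
by the hypothesis $\Hess p(\ybar) \succeq 0$; on the other hand, differentiating $q$ directly gives $q''(1) = 6a + 2b$, so $6a + 2b \ge 0$. Third, the hypothesis $p(\xbar) = p(\ybar)$ is exactly $q(0) = q(1)$, which forces $a + b = 0$. Substituting $a = -b$ into $6a + 2b \ge 0$ yields $-4b \ge 0$; combined with $b \ge 0$, this forces $b = 0$ and therefore $a = 0$.

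With $a = b = 0$, the expression for $q$ reduces to the constant $p(\xbar)$, which gives the first conclusion that $p(\xbar + \alpha(\ybar - \xbar)) = p(\xbar)$ for every scalar $\alpha$. For the second conclusion, $b = 0$ means $(\ybar - \xbar)^T \Hxb (\ybar - \xbar) = 0$, and since $\Hxb \succeq 0$, this standard fact forces $\Hxb(\ybar - \xbar) = 0$, i.e.\ $\ybar - \xbar \in \nulls(\Hxb)$. There is no real technical obstacle here; the only design choice is to recognize that the hypothesis $\Hess p(\ybar) \succeq 0$ enters through the second derivative of $q$ at the endpoint $\alpha = 1$, which is the ingredient that upgrades ``$q$ has two equal values with nonnegative leading curvature at one end'' to ``$q$ is identically $p(\xbar)$.''
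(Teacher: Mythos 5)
Your proof is correct and follows essentially the same route as the paper: restrict $p$ to the line through $\xbar$ and $\ybar$, then combine nonnegative curvature of the univariate cubic $q$ at both $\alpha = 0$ and $\alpha = 1$ with $q(0) = q(1)$ to force $q$ to be constant. The only difference is that you explicitly compute the coefficient constraints ($b \ge 0$, $6a + 2b \ge 0$, $a + b = 0$), whereas the paper compresses that calculation into a stated fact about univariate cubics with a second-order point — your version is arguably more self-contained.
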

		
		 Note in particular that this lemma applies if $\ybar$ is simply a second-order point, since $p$ must take the same value at any two second-order points. This is because any non-constant univariate cubic polynomial can have at most one second-order point.
	
		\begin{proof}
			Consider the Taylor expansion of $p$ around $\xbar$ in the direction $\ybar - \xbar$ (see (\ref{Eq: cubic taylor})):
			$$q(\alpha) \defeq p(\xbar + \alpha (\ybar - \xbar)) = p_3(\ybar - \xbar)\alpha^3 + \frac{1}{2}(\ybar - \xbar)^T \Hxb (\ybar - \xbar) \alpha^2 + \gxb^T(\ybar - \xbar)\alpha + p(\xbar).$$
			Note that $q$ is a univariate cubic polynomial which has a second-order point at $\alpha = 0$. It is straightforward to see that if a univariate cubic polynomial is not constant and has a second-order point, then any other point which takes the same function value as the second-order point must have a negative second derivative. As this is not the case for $q$ (in view of $\alpha = 0$ and $\alpha = 1$), $q$ must be constant, i.e., $p(\xbar + \alpha (\ybar - \xbar)) = p(\xbar)$ for any $\alpha$. Now observe that for $p(\xbar + \alpha (\ybar - \xbar))$ to be constant, we must have $(\ybar - \xbar)^T \Hxb (\ybar - \xbar) = 0$. As $\Hxb \succeq 0$, we have $\ybar - \xbar \in \nulls (\Hxb)$.
		\end{proof}
	
	\begin{theorem}\label{Thm: WLM Closure Spectrahedron}
		For a cubic polynomial $p:\Rn \to \R, \overline{LM_p}$ is a spectrahedron.
	\end{theorem}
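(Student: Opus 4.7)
The plan is to fix any $\xbar \in LM_p$ (the case $LM_p = \emptyset$ is trivial, since the empty set is a spectrahedron) and establish the identity
\[
\overline{LM_p} \;=\; \{\xbar + v \,:\, v \in \nulls(\Hxb) \text{ and } \Hess p(\xbar + v) \succeq 0\}.
\]
The right-hand side is manifestly a spectrahedron: the linear constraints defining $v \in \nulls(\Hxb)$ can be stacked with the affine LMI $\Hess p(x) \succeq 0$ into a single block-diagonal LMI. Thus the entire burden of the proof is the above set equality.

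For the inclusion $\subseteq$, I would use the fact recorded in Section~\ref{SSec: Cubic Preliminaries} that $p$ is constant on the line through any two of its local minima, so $p(y) = p(\xbar)$ for every $y \in LM_p$. Applying Lemma~\ref{Lem: constant null} with $\xbar$ in the role of the second-order point and $\ybar = y$ then forces $y - \xbar \in \nulls(\Hxb)$, while the SONC at $y$ gives $\Hess p(y) \succeq 0$. Both conditions define closed sets (the first is an affine subspace; the second is the preimage of the PSD cone under a continuous map), so they pass from $LM_p$ to $\overline{LM_p}$.

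For $\supseteq$, suppose $v \in \nulls(\Hxb)$ and $\Hess p(\xbar + v) \succeq 0$. Since $\Hess p$ is affine in its argument, for each $t \in [0,1]$
\[
\Hess p(\xbar + tv) \;=\; (1-t)\Hxb + t\,\Hess p(\xbar + v) \;\succeq\; 0,
\]
and Lemma~\ref{Lem: Gradient Invariance} yields $\grad p(\xbar + tv) = 0$. For $t \in [0,1)$, both coefficients of the convex combination are strictly positive, so the null space of $\Hess p(\xbar + tv)$ equals $\nulls(\Hxb) \cap \nulls(\Hess p(\xbar + v))$, which is contained in $\nulls(\Hxb)$. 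The TOC at $\xbar$ therefore implies the TOC at $\xbar + tv$, and by Theorem~\ref{Thm: TOC} we conclude that $\xbar + tv \in LM_p$ for every $t \in [0,1)$. Letting $t \to 1$ places $\xbar + v \in \overline{LM_p}$.

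The main technical step is this last argument, which propagates the TOC from $\xbar$ to every interior point of the segment $[\xbar, \xbar+v]$. It relies on two ingredients that one must be careful to combine correctly: the affine formula for the Hessian along the segment, and the classical fact that a strictly positive convex combination of two PSD matrices has null space equal to the intersection of the summand null spaces. Once the TOC propagation is in hand, closedness under $t \to 1$ is automatic and the spectrahedral representation of the right-hand side is immediate from the affinity of $x \mapsto \Hess p(x)$.
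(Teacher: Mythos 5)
Your proof is correct and is essentially the same as the paper's: you use the identical spectrahedron $\{\xbar + v : v \in \nulls(\Hxb),\ \Hess p(\xbar+v) \succeq 0\}$ (the paper writes it as $\{x : \Hx \succeq 0,\ \Hxb(x-\xbar)=0\}$), prove $\subseteq$ via Lemma~\ref{Lem: constant null} and closedness, and prove $\supseteq$ by propagating FONC via Lemma~\ref{Lem: Gradient Invariance}, SONC via affinity of the Hessian, and TOC via the null-space containment under strictly positive convex combinations. The only slip is that "both coefficients strictly positive" holds for $t\in(0,1)$ rather than $t\in[0,1)$, but this is harmless since $\xbar$ itself is already a local minimum.
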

	\begin{proof}
	    If $LM_p$ is empty, the claim is trivial. Otherwise, let $\xbar \in LM_p$. We show that $\overline{LM_p}$ is given by the spectrahedron
	    \beq\label{Eq: WLM Closure Spectrahedron}
	    M \defeq \{x \in \Rn\ |\ \Hx \succeq 0, \Hxb(x - \xbar) =0\}.
	    \eeq
	    First consider any $\ybar \in LM_p$. From the SONC we know that $\Hess p(\ybar) \succeq 0$ and from Lemma \ref{Lem: constant null}, we know that $\ybar - \xbar \in \nulls(\Hxb)$. Thus $\ybar \in M$. Since $M$ is closed, we get that $\overline{LM_p} \subseteq M$.
	    
	    Now consider any $\ybar \in M$. By the definition of $M$, $\ybar$ satisfies the SONC, and by Lemma \ref{Lem: Gradient Invariance}, it also satisfies the FONC. Since for any scalar $\alpha \in (0,1)$, $\Hess p(\xbar + \alpha (\ybar - \xbar))$ is a convex combination of the two psd matrices $\Hxb$ and $\Hess p(\ybar)$, we have $\nulls(\Hess p(\xbar + \alpha (\ybar - \xbar))) \subseteq \Hxb$ and thus $\xbar + \alpha (\ybar - \xbar)$ satisfies the TOC (since $\xbar$ does). Thus $\ybar$ can be written as the limit of local minima of $p$ (e.g. $\{\xbar + \alpha (\ybar - \xbar)\}$ as $\alpha \to 1$).
	\end{proof}

	\begin{remark}
	We will soon show that for a cubic polynomial $p$, if $LM_p$ is nonempty, then $\overline{LM_p} = SO_p$ (see Theorem \ref{Thm: Closure}). In Section \ref{Sec: Finding Local Min}, we will give other representations of $SO_p$, which in contrast to the representation in (\ref{Eq: WLM Closure Spectrahedron}), do not rely on access to or even existence of a local minimum.
	\end{remark}

	\subsection{Local Minima and Solutions to a ``Convex'' Problem}\label{SSec: convex pop}

	In Section \ref{Sec: Finding Local Min}, we present an SDP-based approach for finding local minima of cubic polynomials. (We note again that the SDP representation in (\ref{Eq: WLM Closure Spectrahedron}) is useless for this purpose as it already assumes access to a local minimum.) Many common approaches for computing local minima of twice-differentiable functions involve first finding critical points of the function, and then checking whether they satisfy second-order conditions. However, as discussed in the introduction and in Section~\ref{Sec: NP-hardness results}, such approaches are unlikely to be effective for cubic polynomials as critical points of these functions are in fact NP-hard to find (see Theorem~\ref{Thm: Critical point cubic NP-hard}). Interestingly, however, we show in Section~\ref{Sec: Finding Local Min} that by bypassing the search for critical points, one can directly find second-order points and local minima of cubic polynomials by solving semidefinite programs of tractable size. The key to our approach is to relate the problem of finding a local minimum of a cubic polynomial $p$ to the following optimization problem:
	\begin{equation}\label{Eq: convex pop}
    \begin{aligned}
	& \underset{x \in \Rn}{\inf}
	& & p(x) \\
	& \text{subject to}
	&& \Hx \succeq 0.\\
	\eaeql
	The connection between solutions of (\ref{Eq: convex pop}) and local minima of $p$ is established by Theorem \ref{Thm: Closure} below. The feasible set of (\ref{Eq: convex pop}) has interesting geometric properties (see, e.g., Corollary \ref{Thm: Cubic Spectrahedron}) and will be referred to with the following terminology in the remainder of the paper.
	
	\begin{defn}\label{Def: ConvR}
		The \emph{convexity region} of a polynomial $p: \Rn \to \R$ is the set $$CR_p \defeq \{x \in \Rn\ |\ \Hx \succeq 0\}.$$
	\end{defn}
	Observe that for any cubic polynomial, its convexity region is a spectrahedron,
	and thus a convex set. As $p$ is a convex function when restricted to its convexity region, one can consider (\ref{Eq: convex pop}) to be a convex problem in spirit. 
	
	\begin{theorem}\label{Thm: Closure}
        Let $p$ be a cubic polynomial with a second-order point. Then the following sets are equivalent:
        \renewcommand{\labelenumi}{(\roman{enumi})}
		\begin{enumerate}
			\item $SO_p$
			\item Minima of (\ref{Eq: convex pop}).
		\end{enumerate}
		Furthermore, if $p$ has a local minimum, then these two sets are equivalent to:
		\renewcommand{\labelenumi}{(\roman{enumi})}
		\begin{enumerate}\addtocounter{enumi}{2}
			\item $\overline{LM_p}$.
		\end{enumerate}
	\end{theorem}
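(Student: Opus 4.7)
The plan is to prove $(i)=(ii)$ in general, and then under the local-minimum hypothesis fit $\overline{LM_p}$ between them. I will rely on three ingredients: (a) convexity of $p$ on the spectrahedron $CR_p$; (b) Lemma~\ref{Lem: constant null}; and (c) the algebraic identity $H_d d = 2\grad p_3(d)$, where $H_d \defeq \sum_i d_i H_i$, which follows from the symmetry relations in~(\ref{Eq: Valid Hessian}) and the formula $(\grad p_3(x))_k = \frac{1}{2} x^T H_k x$ recalled in Section~\ref{SSec: Cubic Preliminaries}.

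For $(i) \subseteq (ii)$: since $\Hess p \succeq 0$ throughout $CR_p$, the function $p$ is convex on the convex set $CR_p$ (the second derivative along any segment in $CR_p$ is nonnegative), and any $x^* \in SO_p$ is a critical point of $p$ lying in $CR_p$, hence a global minimum of~(\ref{Eq: convex pop}). For the reverse containment, let $\xbar$ minimize~(\ref{Eq: convex pop}) and let $x^* \in SO_p$. By the forward inclusion, $x^*$ is itself a minimizer, so $p(\xbar) = p(x^*)$. Lemma~\ref{Lem: constant null}, with $x^*$ in the role of the second-order point, then gives $d \defeq \xbar - x^* \in \nulls(\Hess p(x^*))$ and $p(x^* + \alpha d) = p(x^*)$ for all $\alpha \in \R$. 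Re-expanding this constant-on-a-line restriction as a Taylor series about $\xbar$ via~(\ref{Eq: cubic taylor}) forces every coefficient to vanish; in particular, $d^T \Hess p(\xbar) d = 0$, which combined with $\Hess p(\xbar) \succeq 0$ yields $d \in \nulls(\Hess p(\xbar))$ as well. Since $\Hess p$ is affine in $x$, $\Hess p(\xbar) - \Hess p(x^*) = H_d$, and applying both sides to $d$ gives $H_d d = 0$. The identity $H_d d = 2 \grad p_3(d)$ now forces $\grad p_3(d) = 0$, and the exact cubic expansion $\grad p(\xbar) = \grad p(x^*) + \Hess p(x^*) d + \grad p_3(d)$ collapses to $\grad p(\xbar) = 0$. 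Together with $\Hess p(\xbar) \succeq 0$, this proves $\xbar \in SO_p$.

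Now suppose $p$ has a local minimum $\ybar$. By Theorem~\ref{Thm: WLM Closure Spectrahedron}, $\overline{LM_p} = \{x \in \Rn \mid \Hess p(x) \succeq 0,\; \Hess p(\ybar)(x - \ybar) = 0\}$. The inclusion $\overline{LM_p} \subseteq SO_p$ is immediate because $LM_p \subseteq SO_p$ and $SO_p$ is closed (as the intersection of the zero set of the continuous map $\grad p$ with the closed spectrahedron $CR_p$). For the reverse inclusion, let $x^* \in SO_p$; since any two second-order points of a cubic polynomial take the same $p$-value (noted after Lemma~\ref{Lem: constant null}), $p(x^*) = p(\ybar)$, so Lemma~\ref{Lem: constant null} applied with $\ybar$ as the second-order point yields $x^* - \ybar \in \nulls(\Hess p(\ybar))$, placing $x^*$ inside the spectrahedron description of $\overline{LM_p}$.

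The main obstacle is the direction $(ii) \subseteq (i)$: since $\xbar$ may sit on the boundary of $CR_p$, naive Lagrangian reasoning only yields $\grad p(\xbar)$ in a normal cone rather than $\grad p(\xbar) = 0$. The cubic-specific identity $H_d d = 2 \grad p_3(d)$ is the crux, upgrading second-order null-space information at the two points $x^*$ and $\xbar$ into the stronger first-order conclusion.
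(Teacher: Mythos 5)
Your proof is correct, and it departs from the paper's argument in two substantive ways. For the inclusion $(ii) \subseteq (i)$, the paper argues by contradiction: assuming $\grad p(\ybar) \ne 0$ for a minimizer $\ybar$, it first notes $\grad p(\ybar) = \grad p_3(d)$ and $p_3(d)=0$, then constructs the direction $d - \alpha \grad p_3(d)$ along which $\Hess p(\ybar)$ fails to be psd for small $\alpha > 0$. You give a direct argument instead: Lemma~\ref{Lem: constant null} plus the constancy of $p$ along the segment place $d$ simultaneously in $\nulls(\Hess p(x^*))$ and $\nulls(\Hess p(\xbar))$; subtracting the affine Hessians gives $H_d d = 0$, and the Euler-type identity $H_d d = 2\grad p_3(d)$ (which the paper itself uses elsewhere in the form $\grad p_3(d) = \tfrac{1}{2}\Hess p_3(d)^T d$) lets every term in the exact expansion $\grad p(\xbar) = \grad p(x^*) + \Hess p(x^*)d + \grad p_3(d)$ vanish. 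This is shorter and makes the mechanism --- linear algebra of affine Hessians together with homogeneity of $p_3$ --- more transparent than the perturbation-and-contradiction route. For the equivalence with $\overline{LM_p}$, the paper proves $(ii) \subseteq (iii)$ by exhibiting each minimizer of (\ref{Eq: convex pop}) as the limit of a segment of local minima, whereas you invoke Theorem~\ref{Thm: WLM Closure Spectrahedron}'s explicit description of $\overline{LM_p}$ and drop a given second-order point directly into it via Lemma~\ref{Lem: constant null}. Both are valid and there is no circularity (Theorem~\ref{Thm: WLM Closure Spectrahedron} is established beforehand, independently of this theorem); the paper's route is more self-contained, while yours is more economical given what is already available. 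Your $(i) \subseteq (ii)$ direction rephrases the paper's line-restriction argument as convexity of $p$ on $CR_p$ together with the first-order inequality at a critical point --- same underlying fact, cleaner packaging.
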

	\begin{proof}
	    $(i) \subseteq (ii)$.\\
		Let $\ybar \in SO_p$ and $\xbar$ be any feasible point to (\ref{Eq: convex pop}). If we consider the univariate cubic polynomial $q(\alpha) \defeq p(\xbar + \alpha(\ybar - \bar{x}))$, i.e., the restriction of $p$ to the line passing through $\xbar$ and $\ybar$, we can see that $\alpha = 1$ is a second-order point of $q$. Note that if any univariate cubic polynomial has a second-order point, then that second-order point is a minimum of it over its convexity region. In particular, because $\xbar$ is feasible to (\ref{Eq: convex pop}) and thus $\alpha = 0$ is in the convexity region of $q$, we have $p(\ybar) = q(1) \le q(0) = p(\xbar)$. As $\ybar$ is feasible to (\ref{Eq: convex pop}) and has objective value no higher than any other feasible point, it must be optimal to (\ref{Eq: convex pop}).
	
	    $(ii) \subseteq (i)$\\
		Let $\ybar$ be a minimum of (\ref{Eq: convex pop}) (we know that such a point exists because we have shown $SO_p$ is a subset of the minima of (\ref{Eq: convex pop}), and $SO_p$ is nonempty by assumption). Let $\xbar \in SO_p$ and $d \defeq \ybar - \xbar$. Observe that $p(\ybar) = p(\xbar)$, and so by Lemma \ref{Lem: constant null}, we must have $d \in \nulls(\Hxb)$. It follows that $\grad p(\ybar) = \grad p_3(d)$ (cf. the proof of Lemma \ref{Lem: Gradient Invariance}). Now suppose for the sake of contradiction that $\ybar$ is not a second-order point. Since $\ybar$ is feasible to (\ref{Eq: convex pop}), we must have $\grad p(\ybar) = \grad p_3(d) \ne 0$. As $p(\xbar) = p(\xbar + \alpha d)$ for any scalar $\alpha$ due to Lemma \ref{Lem: constant null}, we must have $p_3(d) = \frac{1}{6} d^T \Hess p_3(d)d = 0$ (see (\ref{Eq: cubic taylor})). Thus we can write
		\begin{align*}
		\big(d - \alpha \gp3d\big)^T\Hess p(\ybar)\big(d-\alpha \gp3d\big) =& \big(d - \alpha \gp3d\big)^T\big(\Hxb + \Hess p_3(d)\big)\big(d-\alpha \gp3d\big)\\
		=& \alpha^2 \gp3d^T\Hxb \gp3d - 2 \alpha \gp3d^T \Hess p_3(d)^Td\\
		&+ \alpha^2 \gp3d^T\Hess p_3(d)\gp3d\\
		=& \alpha^2 \left(\gp3d^T\Hxb \gp3d + \gp3d^T\Hess p_3(d)^T\gp3d\right)\\
		&- 4\alpha \gp3d^T\gp3d,
		\end{align*}
		where the last equality follows from that $\grad p_3(d) = \frac{1}{2} \Hess p_3(d)^Td$ due to Euler's theorem for homogeneous functions. Note that the right-hand side of the above expression is negative for sufficiently small $\alpha > 0$, and so $\Hess p(\ybar)$ is not psd, which contradicts feasibility of $\ybar$ to (\ref{Eq: convex pop}).
		\\
		
		For the second claim of the theorem, suppose that $p$ has a local minimum. The following arguments will show $(i) = (ii) = (iii).$
		
		$(iii) \subseteq (i)$\\
		Clearly any local minimum of $p$ is a second-order point. Since the gradient and the Hessian of $p$ are continuous in $x$ and as the cone of psd matrices is closed, the limit of any convergent sequence of second-order points is a second-order point.
		

		$(ii) \subseteq (iii)$.\\
		Let $\ybar$ be any minimum of (\ref{Eq: convex pop}). Consider any local minimum $\xbar$ of $p$ and let $z_\alpha \defeq \xbar + \alpha(\ybar-\xbar)$. As both $\Hess p(\ybar)$ and $\Hess p(\xbar)$ are psd, any point $z_\alpha$ with $\alpha \in [0,1)$ satisfies the SONC and TOC, by the same arguments as in the proof of Theorem \ref{Thm: WLM Convex}. Now note that since $\xbar$ is a second-order point, it is also a minimum of (\ref{Eq: convex pop}) (as $(i) \subseteq (ii)$) and thus $p(\ybar) = p(\xbar)$. From Lemma \ref{Lem: constant null}, we then have $\ybar - \xbar \in \nulls(\Hxb)$, and so from Lemma \ref{Lem: Gradient Invariance}, $z_\alpha$ satisfies the FONC for any $\alpha$. Thus, in view of Theorem \ref{Thm: TOC}, for any $\alpha \in [0,1), z_\alpha$ is a local minimum of $p$. Therefore $\ybar$ can be written as the limit of a sequence of local minima (i.e., $\{z_\alpha\}$ as $\alpha \to 1$), and hence $\ybar \in \overline{LM_p}$.
	\end{proof}
	
	\begin{remark}\label{Rem: SOP Spectrahedron}
	Note that as a consequence of Theorems \ref{Thm: WLM Closure Spectrahedron}	and \ref{Thm: Closure}, if a cubic polynomial $p$ has a local minimum, then $SO_p$ is a spectrahedron. In fact, $SO_p$ is a spectrahedron for \emph{any} cubic polynomial $p$; see Theorem~\ref{Thm: solution recovery sdp}. In that theorem, we will give a more useful spectrahedral representation of $SO_p$ which does not rely on knowledge of a local minimum.
	\end{remark}

	\begin{cor}\label{Cor: optval convex pop}
	Let $p$ be a cubic polynomial with a second-order point. Then the optimal value of (\ref{Eq: convex pop}) is the value that $p$ takes at any of its second-order points (and in particular, at any of its local minima if they exist).
	\end{cor}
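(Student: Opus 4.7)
The plan is to deduce this corollary directly from Theorem~\ref{Thm: Closure}, which identifies the set of second-order points $SO_p$ with the set of minimizers of the convex program (\ref{Eq: convex pop}). Since any local minimum is a second-order point, it suffices to establish the claim for arbitrary elements of $SO_p$.

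First I would invoke Theorem~\ref{Thm: Closure} to conclude that every second-order point $\bar{x} \in SO_p$ is an optimal solution of (\ref{Eq: convex pop}), and therefore $p(\bar{x})$ equals the optimal value of (\ref{Eq: convex pop}). This already proves the corollary, since if $\bar{x}_1, \bar{x}_2 \in SO_p$, then both are optimal solutions of the same optimization problem and hence $p(\bar{x}_1) = p(\bar{x}_2)$ equals the common optimal value.

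For the parenthetical clause, I would then note that any local minimum of $p$ trivially satisfies the FONC and SONC, so it lies in $SO_p$, and the same conclusion applies. (Alternatively, one can cite the remark after Lemma~\ref{Lem: constant null} which already notes that any non-constant univariate cubic has at most one second-order point, implying that $p$ is constant on the line joining any two second-order points, so $p$ takes a single value on $SO_p$.) There is no real obstacle here, as the entire content of the corollary is packaged into the equivalence $SO_p = \{\text{minima of (\ref{Eq: convex pop})}\}$ already proven. The proof will fit in two or three sentences.
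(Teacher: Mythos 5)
Your proposal is correct and follows exactly the paper's route: the paper's proof is the single sentence that the corollary is immediate from the equivalence of $(i)$ and $(ii)$ in Theorem~\ref{Thm: Closure}, which is precisely the deduction you spell out.
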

	\begin{proof}
	This is immediate from the equivalence of $(i)$ and $(ii)$ in Theorem \ref{Thm: Closure}.
	\end{proof}

	\subsection{Distinction Between Local Minima and Second-Order Points}

	We have shown that the optimization problem in (\ref{Eq: convex pop}) gives an approach for finding second-order points of a cubic polynomial $p$ without computing its critical points. However, not all second-order points are local minima, and so in this subsection, we characterize the difference between the two notions more precisely. We first recall the concept of the relative interior of a (convex) set (see, e.g., \cite[Chap. 6]{rockafellar1970convex}).
	
	\begin{defn}\label{Def: Relative interior}
		The relative interior of a nonempty convex set $S \subseteq \Rn$ is the set
		$$ri(S) \defeq \{x \in S\ |\ \forall y \in S, \exists \lambda > 1\ s.t.\ \lambda x + (1-\lambda)y \in S \}.$$
	\end{defn}
	This definition generalizes the notion of interior to sets which do not have full dimension. One can show that for a convex set $S$, $ri(S)$ is convex, $ri(\bar{S}) = ri(S)$, and $\overline{ri(S)} = \bar{S}$~\cite{rockafellar1970convex}. In general, for a nonempty convex set $S$, we have $ri(\bar{S}) = ri(S) \subseteq S$, but we may not have $ri(\bar{S}) = S$. (For example, let $S$ be a line segment with one of its endpoints removed.) It turns out, however, that for a cubic polynomial $p$ with a local minimum, $ri(\overline{LM_p}) = LM_p$. 
	
	\begin{theorem}\label{Thm: Local Minima SOP}
		Let $p: \Rn \to \R$ be a cubic polynomial with a local minimum. Then the following three sets are equivalent:
		\renewcommand{\labelenumi}{(\roman{enumi})}
		\begin{enumerate}
			\item $LM_p$
			\item $ri(SO_p)$
			\item Intersection of critical points of $p$ with $ri(CR_p)$.
		\end{enumerate}
	\end{theorem}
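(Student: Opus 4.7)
The plan is to prove each of the four inclusions $(i)\subseteq(iii)$, $(iii)\subseteq(i)$, $(i)\subseteq(ii)$, $(ii)\subseteq(i)$ by means of a single Hessian identity combined with the standard spectrahedron fact that the relative interior of $\{x \mid A(x)\succeq 0\}$ is exactly the set of $x$ whose null space satisfies $\nulls(A(x)) \subseteq \nulls(A(y))$ for every $y$ in the spectrahedron (equivalently, the set of rank-maximizers of $A$).

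\textbf{Key identity.} Writing $p$ in the canonical form (\ref{Eq: Cubic Poly Form}) so that $\Hess p_3(v) = \sum_i v_i H_i$, and invoking Lemma \ref{Lem: Hessian switch} together with the affine structure of the Hessian, for any $\xbar \in \Rn$, any $v \in \nulls(\Hxb)$, and any $\tilde x \in \Rn$ one has
\begin{equation*}
\Hess p(\tilde x)\, v \;=\; \left(\sum_i (\tilde x - \xbar)_i H_i\right) v \;=\; \left(\sum_i v_i H_i\right)(\tilde x - \xbar) \;=\; \Hess p_3(v)(\tilde x - \xbar).
\end{equation*}
Contracting with $v^T$ and using $v^T \Hess p_3(v) = 2\,\grad p_3(v)^T$, I find that whenever $\xbar$ is a local minimum of $p$, the TOC yields $\grad p_3(v) = 0$, and consequently $v^T \Hess p(\tilde x)\, v = 0$.

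\textbf{Forward inclusions.} For $(i) \subseteq (iii)$, let $\xbar \in LM_p$ and $\tilde x \in CR_p$. Combining the identity above with $\Hess p(\tilde x) \succeq 0$ forces $\Hess p(\tilde x)\, v = 0$ for every $v \in \nulls(\Hxb)$, giving $\nulls(\Hxb) \subseteq \nulls(\Hess p(\tilde x))$. The spectrahedron fact then places $\xbar$ in $ri(CR_p)$, and together with the FONC this gives $\xbar \in (iii)$. For $(i) \subseteq (ii)$, I repeat the argument with $\tilde x$ ranging over $SO_p = \overline{LM_p}$ (Theorem \ref{Thm: Closure}), which is a spectrahedron by Theorem \ref{Thm: WLM Closure Spectrahedron}, to conclude that $\xbar \in ri(\overline{LM_p}) = ri(SO_p)$.

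\textbf{Reverse inclusions.} For $(iii) \subseteq (i)$ and $(ii) \subseteq (i)$, let $\xbar$ be a point in (iii) or (ii), and let $\xbar^*$ be a local minimum of $p$, which exists by hypothesis. The FONC and SONC at $\xbar$ are immediate in each case. Now $\xbar^* \in CR_p$, and also $\xbar^* \in \overline{LM_p}$; hence the spectrahedron fact applied to $\xbar \in ri(CR_p)$ (respectively $\xbar \in ri(\overline{LM_p}) = ri(SO_p)$) yields $\nulls(\Hxb) \subseteq \nulls(\Hess p(\xbar^*))$. The TOC at the local minimum $\xbar^*$ then ensures $\grad p_3(d) = 0$ for every $d \in \nulls(\Hess p(\xbar^*)) \supseteq \nulls(\Hxb)$, so $\xbar$ satisfies the TOC and is a local minimum by Theorem \ref{Thm: TOC}. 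The main technical step is the Hessian identity, which crucially relies on the symmetry relation (\ref{Eq: Valid Hessian}) encoded in Lemma \ref{Lem: Hessian switch}; everything else reduces to a routine null-space comparison via the spectrahedron max-rank characterization.
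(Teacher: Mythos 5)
Your proof is correct and takes a genuinely different and more unified route than the paper's. The paper handles the four inclusions with four distinct arguments: $(ii)\subseteq(i)$ via $ri(SO_p)=ri(\overline{LM_p})=ri(LM_p)\subseteq LM_p$ using the convexity of $LM_p$; $(i)\subseteq(ii)$ by a contrapositive/sequential perturbation argument; $(i)\subseteq(iii)$ by contradiction, essentially re-deriving (via Lemma~\ref{Lem: Smallest Nonzero Eigenvalue}) the hard direction of the max-rank characterization of spectrahedral relative interiors for the specific pencil $\Hess p(\cdot)$; and $(iii)\subseteq(i)$ by a direct convex-combination argument and null-space comparison. You instead observe a single clean Hessian identity, $\Hess p(\tilde x)v=\Hess p_3(v)(\tilde x-\xbar)$ for $v\in\nulls(\Hxb)$, which when contracted against $v$ and combined with the TOC gives $v^T\Hess p(\tilde x)v=0$; from there all four inclusions become instances of the standard Ramana--Goldman fact that $ri(\{x:A(x)\succeq 0\})$ is exactly the set of points at which $A$ attains maximal rank (equivalently, whose null space is contained in every other null space). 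The paper cites only the easy half of this fact (Corollary 1 of Ramana, used in Corollary~\ref{Cor: Relint NS} and Theorem~\ref{Thm: solution recovery sdp}) and re-proves the hard half by hand, so your appeal to the full characterization is a genuine shortcut; the trade-off is that your proof leans harder on this external result, while the paper's is more self-contained.

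One place you should be more explicit: in the inclusions involving $ri(SO_p)$, the max-rank characterization is a statement about the LMI defining the spectrahedron, and the LMI in (\ref{Eq: WLM Closure Spectrahedron}) is \emph{not} $\Hess p(\cdot)$ alone — it is $\Hess p(\cdot)\succeq 0$ together with the affine constraints $\Hess p(\xbar^*)(x-\xbar^*)=0$. To justify reading off null-space containment for $\Hess p$ itself, you need the (routine but worth stating) step of parametrizing the affine subspace $L\defeq\{x:\Hess p(\xbar^*)(x-\xbar^*)=0\}$ and noting that $SO_p=CR_p\cap L$ is, in $L$-coordinates, a spectrahedron whose pencil is exactly $\Hess p(\cdot)$ restricted to $L$; then the rank/null-space characterization does indeed concern $\nulls(\Hess p(\cdot))$. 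With that sentence added the argument is airtight.
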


	\begin{proof}
		$(ii) \subseteq (i)$\\
		Recall from Theorem \ref{Thm: WLM Convex} that $LM_p$ is convex, and from Theorem \ref{Thm: Closure} that $SO_p = \overline{LM_p}$. Then we have $ri(SO_p) = ri(\overline{LM_p}) = ri(LM_p) \subseteq LM_p$.
		
		$(i) \subseteq (ii)$\\
		We prove the contrapositive. Let $\xbar$ be a point which is not in $ri(SO_p)$. If $\xbar$ is not a second-order point, then it clearly cannot be a local minimum. Suppose now that $\xbar \in SO_p \backslash ri(SO_p)$. Then there is another second-order point $\ybar$ such that $\ybar + \lambda (\xbar - \ybar)$ is not a second-order point for any $\lambda > 1$. Note from Lemma~\ref{Lem: constant null} and the statement after it that $p(\ybar + \lambda (\xbar - \ybar))$ is a constant univariate function of $\lambda$. Now for any $\epsilon > 0$, define the point $\bar{z}_\epsilon \defeq \xbar + \frac{\epsilon}{2 \|\xbar - \ybar\|} (\xbar - \ybar)$. Since $\bar{z}_\epsilon$ is not a second-order point and thus not a local minimum, there is a point $z_\epsilon$ satisfying $\|\bar{z}_\epsilon - z_\epsilon\| < \frac{\epsilon}{2}$ and $$p(z_\epsilon) < p(\bar{z}_\epsilon) = p(\frac{\epsilon}{2 \|\xbar - \ybar\|} (\xbar - \ybar)) = p(\xbar).$$ Furthermore, by the triangle inequality, $z_\epsilon$ also satisfies $\|z_\epsilon - \xbar\| < \epsilon$. Thus, by considering $\{z_\epsilon\}$ as $\epsilon \to 0$, we can conclude that $\xbar$ is not a local minimum.
		
		$(i) \subseteq (iii)$\\
		Consider any local minimum $\xbar$ of $p$, which clearly must also be a critical point of $p$, and a member of $CR_p$. Suppose for the sake of contradiction that $\xbar \not\in ri(CR_p)$. Then there exists $y \in CR_p$ such that for any scalar $\alpha > 0, \Hess p(\xbar + \alpha(\xbar - y))$ is not psd. In particular, for any $\alpha > 0$ there exists a unit vector $z_\alpha \in \Rn$ such that $z_\alpha^T \Hess p(\xbar + \alpha(\xbar - y)) z_\alpha < 0$.
		
		We now show that for any $\alpha$, $z_\alpha$ can be taken to be in $\cols(\Hxb)$. This is because, as we will show, if $z_\alpha = d + v$, where $d \in \nulls(\Hxb)$ and $v \in \cols(\Hxb)$,
		\beq\label{Eq: Cross Term Dies}
		(d+v)^T\Hess p(\xbar + \alpha(\xbar - y))(d+v) = v^T\Hess p(\xbar + \alpha(\xbar - y))v.\eeq
		Observe that if $p$ is written in the form (\ref{Eq: Cubic Poly Form}), for any $d~\in~\nulls(\Hxb)$, we have
		\baeq
		d^T \Hess p(\xbar + \alpha(\xbar - y)) d
		&= d^T\left(\sum_{i=1}^n (\xbar_i + \alpha(\xbar_i - y_i))H_i  + Q\right)d\\
		&= d^T\left(\sum_{i=1}^n \xbar_iH_i + Q\right)d + \alpha \sum_{i=1}^n (d^TH_id)(\xbar_i-y_i) = 0,
		\eaeq
		where the last equality follows from that $d \in \nulls(\Hxb)$, and the TOC, recalling that the $i$-th entry of $\gp3d$ is $\frac{1}{2}d^TH_id$. Note in particular that the expression above also holds for $\alpha = -1$, and so $d \in \nulls(\Hess p(y))$. Now observe that because we can write $$\Hess p(\xbar + \alpha(\xbar - y)) = (1+\alpha)\Hxb - \alpha \Hess p(y),$$
		we have $\Hess p(\xbar + \alpha(\xbar - y))d = 0$. Thus, we have shown (\ref{Eq: Cross Term Dies}), and we can take $z_\alpha \in \cols(\Hxb)$.
		
		Note that if $z_\alpha \in \cols(\Hxb)$, then by Lemma \ref{Lem: Smallest Nonzero Eigenvalue} we have $z_\alpha^T\Hxb z_\alpha \ge \lambda$, where $\lambda$ is the smallest nonzero eigenvalue of $\Hxb$. Thus, for small enough $\alpha$, the quantity $z_\alpha^T \Hess p(\xbar + \alpha(\xbar - y)) z_\alpha$ is positive and so we arrive at a contradiction.
		
		$(iii) \subseteq (i)$\\
		Let $\xbar$ be a critical point which is in $ri(CR_p)$. Clearly $\xbar \in SO_p$. Consider any local minimum $\ybar$ of $p$, and	observe that for any $\alpha \ne 0$, we can write \beq \label{Eq: xbar convex combination} \xbar = \frac{1}{\alpha}(\alpha \xbar + (1-\alpha)\ybar) + \frac{\alpha-1}{\alpha}\ybar.\eeq
		As $\xbar \in ri(CR_p)$ and $\ybar \in CR_p, \alpha \xbar + (1-\alpha) \ybar \in CR_p$ for some $\alpha > 1$. In particular, for that $\alpha, \Hess p(\alpha \xbar + (1-\alpha)\ybar) \succeq 0$ and thus in view of (\ref{Eq: xbar convex combination}), we can see that $\nulls(\Hess p(\xbar)) \subseteq \nulls(\Hess p(\ybar))$. Hence, because the TOC holds at $\ybar$, it must also hold at $\xbar$. Thus $\xbar$ is a local minimum.
	\end{proof}
	
	Figure \ref{Fig: SOP LM} demonstrates the relation between $LM_p$ and $SO_p$ for the polynomial $p(x_1,x_2) = x_1^2x_2$. For this example, $SO_p = \{(x_1,x_2)\ |\ x_1 = 0, x_2 \ge 0\}$, and $LM_p = \{(x_1,x_2)\ |\ x_1 = 0, x_2 > 0\}$ (see Example~\ref{Ex: local min}).
	\begin{figure}[H]
	\centering
	\includegraphics[height=.25\textheight,keepaspectratio]{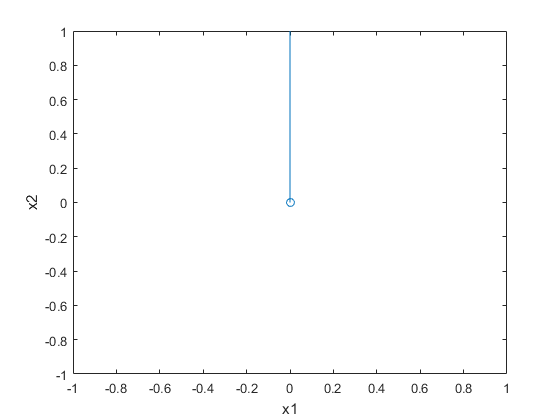}
	\includegraphics[height=.25\textheight,keepaspectratio]{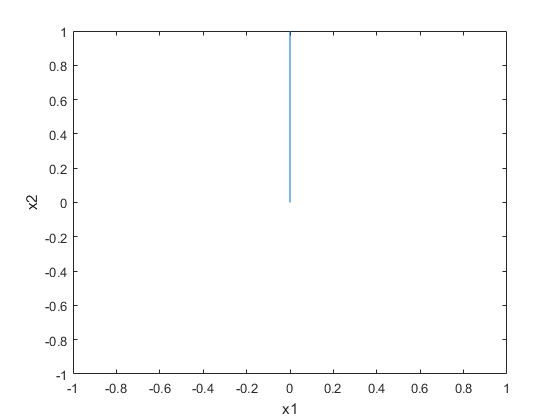}
	\caption{The set of local minima (left) and second-order points (right) of the cubic polynomial $p(x_1,x_2) = x_1^2x_2$. Note that $SO_p$ is the closure of $LM_p$ (Theorem \ref{Thm: Closure}) and $LM_p$ is the relative interior of $SO_p$ (Theorem \ref{Thm: Local Minima SOP}).}
	\label{Fig: SOP LM}
	\end{figure}

	Theorem \ref{Thm: Local Minima SOP} gives rise to the following interesting geometric fact about local minima of cubic polynomials.

	\begin{cor}\label{Cor: Relint NS}
		Let $\xbar$ and $\ybar$ be two local minima of a cubic polynomial. Then $$\nulls (\Hxb) = \nulls (\Hess p(\ybar)).$$
	\end{cor}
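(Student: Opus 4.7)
The plan is to derive this as a clean consequence of Theorem~\ref{Thm: Local Minima SOP} (that $LM_p = ri(SO_p)$) together with the fact that $\Hess p$ is affine in $x$ (since $p$ is cubic). The starting observation is that if we move along the line from $\ybar$ through $\xbar$ and extend slightly past $\xbar$, the Hessian stays positive semidefinite, and this will force the null spaces of $\Hxb$ and $\Hess p(\ybar)$ to agree.

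More concretely, I would proceed as follows. First, since both $\xbar$ and $\ybar$ are local minima of $p$, Theorem~\ref{Thm: Local Minima SOP} gives $\xbar, \ybar \in ri(SO_p)$. By the definition of relative interior, since $\xbar \in ri(SO_p)$ and $\ybar \in SO_p$, there exists a scalar $\lambda > 1$ such that the point
\[
z_\lambda \defeq \lambda \xbar + (1-\lambda)\ybar
\]
still lies in $SO_p$; in particular, $\Hess p(z_\lambda) \succeq 0$. Second, because $p$ is cubic, its Hessian is affine, so
\[
\Hess p(z_\lambda) = \lambda \Hxb + (1-\lambda)\Hess p(\ybar) = \lambda \Hxb - (\lambda - 1)\Hess p(\ybar).
\]

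Now pick any $d \in \nulls(\Hxb)$. Using the above identity,
\[
d^T \Hess p(z_\lambda) d = -(\lambda - 1) \, d^T \Hess p(\ybar) d.
\]
The left-hand side is nonnegative (since $\Hess p(z_\lambda) \succeq 0$) and $\lambda - 1 > 0$, so $d^T \Hess p(\ybar) d \le 0$. But $\ybar$ is a local minimum and hence satisfies the SONC, giving $\Hess p(\ybar) \succeq 0$ and therefore $d^T \Hess p(\ybar) d \ge 0$. Combining, $d^T \Hess p(\ybar) d = 0$, and since $\Hess p(\ybar)$ is psd, this forces $d \in \nulls(\Hess p(\ybar))$. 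This shows $\nulls(\Hxb) \subseteq \nulls(\Hess p(\ybar))$. Swapping the roles of $\xbar$ and $\ybar$ (using that $\ybar \in ri(SO_p)$) gives the reverse inclusion, completing the proof.

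There is no real obstacle here beyond correctly invoking the definition of relative interior to extend the segment past $\xbar$; the affinity of $\Hess p$ and the psd-ness of the Hessian at both endpoints and the extended point do the rest.
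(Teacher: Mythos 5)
Your proof is correct, but it takes a more self-contained route than the paper's. The paper proves this corollary in one line by citing a known result of Ramana and Goldman (\cite[Corollary 1]{ramana1995some}): any two points in the relative interior of a spectrahedron give matrix pencils with the same null space. The paper applies this to the spectrahedron $CR_p$, using the fact (from Theorem~\ref{Thm: Local Minima SOP}) that $LM_p \subseteq ri(CR_p)$. You instead work with $ri(SO_p)$ (which equals $LM_p$ by the same theorem), and you essentially re-derive the Ramana--Goldman fact from scratch in this special case: pull the segment past $\xbar$ using the definition of relative interior, use affinity of $\Hess p$ to write $\Hess p(z_\lambda)$ as a signed combination of $\Hxb$ and $\Hess p(\ybar)$, and then play the positive semidefiniteness of the three Hessians against each other to trap $d^T\Hess p(\ybar)d$ at zero. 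This is a clean elementary argument that avoids the external citation; what the paper's approach buys is brevity and the implicit observation that the statement is really a general fact about spectrahedra applied to $CR_p$, not something specific to Hessians of cubics. One small thing to make explicit in your write-up: to invoke $ri(SO_p)$ and the relative-interior definition you are tacitly using that $SO_p$ is convex (it is, by Theorems~\ref{Thm: WLM Convex} and~\ref{Thm: Closure}, or Theorem~\ref{Thm: WLM Closure Spectrahedron}), which is worth a word.
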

	
	\begin{proof}
	It is known (see \cite[Corollary 1]{ramana1995some}) that for a spectrahedron $\{x \in \Rn\ |\ A_0 + \sum_{i=1}^n x_iA_i \succeq 0\}$ and any two points $x$ and $y$ in its relative interior, $\nulls(A_0 + \sum_{i=1}^n x_iA_i) = \nulls(A_0 + \sum_{i=1}^n y_iA_i)$. In view of the facts that for any cubic polynomial $p$, $CR_p$ is a spectrahedron and $LM_p \subseteq ri(CR_p)$ (from Theorem \ref{Thm: Local Minima SOP}), the result is immediate.
	\end{proof}
	
	\subsection{Spectrahedra and Convexity Regions of Cubic Polynomials}\label{SSec: Cubic Spectrahedra}
	
	We end this section with a result relating general spectrahedra and convexity regions of cubic polynomials. Recall from the end of Section \ref{SSec: Cubic Preliminaries} that if $S \defeq \{x \in \Rn\ |\ A_0 + \sum_{i=1}^n x_iA_i \succeq 0\}$ is a special spectrahedron, where $A_0, \ldots, A_n$ are $n \times n$ symmetric matrices satisfying $$(A_i)_{jk} = (A_j)_{ik} = (A_k)_{ij}$$ for any $i,j,k \in \{1 ,\ldots, n\}$, then $S$ is the convexity region of the cubic polynomial $$p(x) = \frac{1}{6}\sum_{i=1}^n x^Tx_iA_ix + \frac{1}{2}x^TA_0x.$$
	
	The following theorem shows that if the number of variables is allowed to increase, then \emph{any} spectrahedron can be represented by the convexity region of a cubic polynomial.
	
	\begin{theorem}\label{Thm: Cubic Spectrahedron}
		Let a spectrahedron $S \subseteq \Rn$ be given by $S \defeq \{x \in \Rn\ |\ A_0 + \sum_{i=1}^n x_iA_i \succeq 0\}$, where $A_0, \ldots, A_n \in \mathbb{S}^{m \times m}$. There exists a cubic polynomial $p$ in at most $m+n$ variables such that $S$ is a projection of its convexity region; i.e., $$S = \{x \in \Rn\ |\ \exists y \in \R^m \mbox{ such that } (x,y) \in CR_p\}.$$ Furthermore, the interior of $S$ is a projection of the set of local minima of $p$.
	\end{theorem}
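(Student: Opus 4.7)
My plan is to construct an explicit cubic polynomial and verify both claims via the characterization of local minima from Theorem~\ref{Thm: TOC}. Specifically, I would take
$$p(x,y) = \frac{1}{2} y^T A_0 y + \frac{1}{2} \sum_{i=1}^n x_i y^T A_i y$$
in the $m+n$ variables $(x,y) \in \Rn \times \R^m$, and let $\pi$ denote the projection onto the $x$-coordinates. A direct computation gives
$$\Hess p(x,y) = \bmat 0 & Y(y)^T \\ Y(y) & A(x) \emat,$$
where $A(x) \defeq A_0 + \sum_{i=1}^n x_i A_i$ and $Y(y)$ is the $m \times n$ matrix whose $i$-th column is $A_i y$.

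Given this Hessian, the first claim is almost immediate: $\pi(CR_p) \subseteq S$ because $A(x)$ appears as a principal submatrix of the Hessian, and the reverse inclusion holds by taking $y=0$, since $\Hess p(x,0)$ is block-diagonal with blocks $0$ and $A(x)$, hence positive semidefinite whenever $x \in S$.

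For the second claim, I would use Theorem~\ref{Thm: TOC} to handle both inclusions. For $\text{int}(S) \subseteq \pi(LM_p)$, at a point $(x,0)$ with $x \in \text{int}(S)$, the FONC and SONC are immediate; for the TOC, since $\text{int}(S) = ri(S)$ is nonempty, I would invoke the constant-null-space property of spectrahedra on their relative interior (the fact used in the proof of Corollary~\ref{Cor: Relint NS}) to obtain a common null space $N$ for $A(x')$ as $x'$ ranges over $\text{int}(S)$, and then perturb $x$ along each coordinate direction to deduce $A_i N = 0$ for $i=1,\ldots,n$, which is exactly what is needed for the TOC at $(x,0)$. For the reverse inclusion, if $(x,y) \in LM_p$, the SONC forces $Y(y)=0$ (since the upper-left block of the Hessian is zero) and $A(x) \succeq 0$; the TOC then forces $A_i d = 0$ for every $d \in \nulls(A(x))$ and every $i=1,\ldots,n$, and hence $A_0 d = 0$ as well. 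This identifies $\nulls(A(x))$ with $N' \defeq \cap_{i=0}^n \nulls(A_i)$. Decomposing $\R^m = N' \oplus (N')^\perp$, the matrix $A(x')$ acts as $0$ on $N'$ for every $x' \in \Rn$, so PSDness of $A(x')$ reduces to PSDness of an affine symmetric matrix $C(x')$ on $(N')^\perp$; since $C(x) \succ 0$ (as the null space of $A(x)$ is exactly $N'$), continuity gives $C(x') \succ 0$ in a Euclidean neighborhood of $x$, so $x \in \text{int}(S)$.

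The main obstacle I expect is the equivalence between the algebraic TOC at $(x,0)$ and the topological condition $x \in \text{int}(S)$. This requires combining the constant-null-space structure of spectrahedra on their relative interior with a block-decomposition argument showing that strict positive definiteness on $(N')^\perp$ at $x$ persists in a Euclidean neighborhood. Once $p$ is chosen so that its Hessian realizes the pencil $A(x)$ as a principal submatrix, the rest of the argument is largely bookkeeping.
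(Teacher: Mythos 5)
Your construction and argument are correct, and they reach the same conclusion by a genuinely different (though closely related) route. The paper first reduces the pencil: it sets $\nulls_A \defeq \cap_{i=0}^n \nulls(A_i)$, picks $V$ spanning $\nulls_A^\perp$, passes to $B(x) \defeq V^T A(x) V$ (so that the $B_i$ have trivial common null space), and then uses $p(x,y) = y^T B(x)y$ in $n+k$ variables, $k = m - \dim \nulls_A$. This reduction lets the paper invoke a result of Ramana and Goldman directly identifying $\mathrm{int}(S)$ with $\{x : B(x) \succ 0\}$, and in the converse direction it forces $\ybar=0$ immediately from FONC+SONC, which shortens the bookkeeping. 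You skip the reduction and take $p(x,y)=\tfrac12 y^T A(x) y$ in all $m+n$ variables; you then supply the missing ingredient yourself, in two pieces: (a) for the forward inclusion you invoke the constant-null-space-on-$ri$ property of spectrahedra and perturb $x$ along coordinate directions to extract $A_i N = 0$ for $i=1,\dots,n$, which is exactly what the TOC at $(x,0)$ requires; (b) for the reverse inclusion you show TOC forces $\nulls(A(x)) = \cap_{i\ge 0}\nulls(A_i)$ and then make the block-decomposition/continuity argument to conclude $x \in \mathrm{int}(S)$ — this is effectively re-deriving the fragment of Ramana--Goldman that the paper cites. Both approaches are valid and of comparable length; the paper's is slightly cleaner because the reduction hides the $\nulls_A$-block once and for all, while yours is more self-contained in the sense that it does not change the pencil and instead proves the needed facts on the spot. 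One small stylistic point: you assert ``$\mathrm{int}(S)=ri(S)$ is nonempty,'' which is only true when $S$ is full-dimensional; you do handle the degenerate case implicitly (when $\mathrm{int}(S)=\emptyset$ the forward inclusion is vacuous and your reverse inclusion forces $\pi(LM_p)=\emptyset$), but it would be cleaner to say so explicitly.
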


	\begin{proof}
		Let $A(x) \defeq A_0 + \sum_{i=1}^n x_iA_i$. We first present a characterization of the interior of $S$ following the developments in Section 2.4 of \cite{ramana1995some}. Let $\nulls_A \defeq \nulls(A_0) \cap  \ldots \cap \nulls(A_n)$, and $V$ be a full-rank matrix whose columns span the orthogonal complement of $\nulls_A$. Suppose that $\nulls_A$ is $(m-k)$-dimensional. Then there exist matrices $B_0, \ldots, B_n \in \mathbb{S}^{k \times k}$ with $\nulls(B_0) \cap \ldots \cap \nulls(B_n) = \{0_k\}$ such that $$B(x) \defeq B_0 + \sum_{i=1}^n x_iB_i = V^TA(x)V.$$
		In \cite[Corollary 5]{ramana1995some}, it is shown that $B(x)$
		\beq\label{Eq: Pencils same}\{x \in \Rn\ |\ A(x) \succeq 0\} = \{x \in \Rn\ |\ B(x) \succeq 0\}\eeq
		and that the set $\{x \in \Rn\ |\ B(x) \succ 0\}$ gives the interior of $S$. Now consider the following cubic polynomial in $n+k$ variables:
		\beq\label{Eq: Cubic with LM} p(x,y) \defeq y^TB(x)y.\eeq
		Observe that the partial derivative of $p$ with respect to $y$ is $2B(x)y$, the partial derivative of $p$ with respect to $x_i$ is $y^TB_iy$, and the Hessian of $p$ is
		$$\Hess p(x,y) = 2\bmat 0 & C(y)^T \\ C(y) & B(x) \emat,$$
		where $C(y)$ is an $k \times n$ matrix whose $i$-th column equals $B_iy$. One can then immediately see that if $(\xbar, \ybar) \in CR_p$, then we must have $B(\xbar) \succeq 0$. Conversely, if $B(\xbar) \succeq 0$, then $(\xbar,0_k) \in CR_p$. Hence, in view of (\ref{Eq: Pencils same}), we have shown that the spectrahedron $S$ is the projection of $CR_p$ onto the $x$ variables.
		
		We now show that $LM_p = \{x \in \Rn\ |\ B(x) \succ 0\} \times \{0_k\}$. This would prove the second claim of the theorem. First let $\xbar$ be such that $B(\xbar) \succ 0$. Note that $p(\xbar, 0_k) = 0$ and that for any two vectors $\chi \in \Rn$ and $\psi \in \mathbb{R}^k$, $$p(\xbar + \chi, \psi) = \psi^T\left(B(\xbar) + \sum_{i=1}^n B_i\chi_i\right)\psi.$$ Since $B(\xbar) \succ 0$, then for any $\chi$ of sufficiently small norm, $B(\xbar) + \sum_{i=1}^n B_i\chi_i$ is still positive definite, and hence for any $\psi$, $p(\xbar + \chi,\psi) \ge 0 = p(\xbar,0_k)$. Thus $(\xbar, 0_k)$ is a local minimum of $p$.
		
		Now let $(\xbar, \ybar)$ be a local minimum of $p$. From the SONC, we must have $B(\xbar) \succeq 0$ and $C(\ybar) = 0$, which implies that $B_i\ybar = 0_k, \forall i \in \otn$. Since
		$$\frac{\partial p}{\partial y}(\xbar,\ybar) = 2B(\xbar)\ybar = 2\left(B_0 + \sum_{i=1}^n \xbar_iB_i\right)\ybar = 2B_0\ybar + 2\sum_{i=1}^n \xbar_i(B_i\ybar),$$ it further follows from the FONC that $B_0\ybar = 0$. As $\nulls(B_0) \cap \ldots \cap \nulls(B_n) = \{0_k\}$ by construction, it follows that we must have $\ybar = 0_k$. Next, observe that $\nulls(\Hess p(\xbar, 0_k)) = \R^n \times \nulls(B(\xbar))$. Let $d \in \nulls(B(\xbar))$, and note that for any $i \in \otn$, $(e_i,d) \in \nulls(\Hess p(\xbar, 0_k))$ and $\frac{\partial p_3}{\partial y}(e_i,d) = B_id$. Then from the TOC, we must have $B_id = 0_k, \forall i \in \{1, \ldots, n\}$. Furthermore, since $d \in \nulls(B(\xbar))$, it follows that $B_0d = 0_k$ as well. Again, as $\nulls(B_0) \cap \ldots \cap \nulls(B_n) = \{0_k\}$ by construction, it follows that we must have $d = 0_k$ and thus $B(\xbar) \succ 0$.
	\end{proof}
		
\section{Complexity Justifications for an Exact SDP Oracle}\label{Sec: Complexity}

In the next section, we show that second-order points and local minima of cubic polynomials can be found by solving polynomially-many semidefinite programs with a polynomial number of variables and constraints. One caveat however is that the inputs and outputs of these semidefinite programs can sometimes be algebraic but not necessarily rational numbers. As a result, we cannot claim that second-order points and local minima of cubic polynomials can be found in polynomial time in the Turing model of computation. In this subsection, we give evidence as to why establishing the complexity of these problems in the Turing model is at the moment likely out of reach.

\begin{defn}
The \emph{SDP Feasibility Problem} (SDPF) is the following decision question: Given $m \times m$ symmetric matrices $A_0, \ldots, A_n$ with rational entries, decide whether there exists a vector $x \in \Rn$ such that $A_0 + \sum_{i=1}^n x_iA_i \succeq 0$.
\end{defn}

\begin{defn}
The \emph{SDP Strict Feasibility Problem} (SDPSF) is the following decision question: Given $m \times m$ symmetric matrices $A_0, \ldots, A_n$ with rational entries, decide whether there exists a vector $x \in \Rn$ such that $A_0 + \sum_{i=1}^n x_iA_i \succ 0$.
\end{defn}

Even though semidefinite programs can be solved to arbitrary accuracy in polynomial time~\cite{vandenberghe1996semidefinite}, the complexities of the decision problems above remain as two of the outstanding open problems in semidefinite programming. At the moment, it is not known if these two decision problems even belong to the class NP~\cite{ramana1993algorithmic, porkolab1997complexity, de2006aspects}. We show next that the complexities of these problems are a lower bound on the complexities of testing existence of second-order points and local minima of cubic polynomials. (In Section~\ref{Sec: Finding Local Min}, we accomplish the more involved task of giving the reduction in the opposite direction.)

\begin{theorem}\label{Thm: SOP SDPF}
    If the problem of deciding whether a cubic polynomial has any second-order points is in P (resp. NP), then SDPF is in P (resp. NP).
\end{theorem}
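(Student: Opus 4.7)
The plan is to give a polynomial-time many-one reduction from SDPF to the problem of deciding whether a cubic polynomial has a second-order point, reusing the construction from Theorem~\ref{Thm: Cubic Spectrahedron}. Given an SDPF instance consisting of rational symmetric matrices $A_0, A_1, \ldots, A_n \in \mathbb{S}^{m \times m}$ defining $A(x) \defeq A_0 + \sum_{i=1}^n x_i A_i$, I compute a rational basis $V$ for the orthogonal complement of $\nulls(A_0) \cap \cdots \cap \nulls(A_n)$, form the reduced matrices $B_i \defeq V^T A_i V$ for $i = 0, 1, \ldots, n$, and output the cubic polynomial $p(x,y) \defeq y^T B(x) y$ in $n+k$ variables, where $k$ is the number of columns of $V$ and $B(x) \defeq B_0 + \sum_{i=1}^n x_i B_i$. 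Each of these steps can be carried out in polynomial time on rational inputs using standard linear algebra (for instance, via the Bareiss algorithm for null spaces), and the coefficients of $p$ are rationals whose bit-length is polynomial in the input size.

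The key claim to verify is that $\{x \in \Rn : A(x) \succeq 0\}$ is nonempty if and only if $p$ has a second-order point. For the forward direction, if $A(\xbar) \succeq 0$ then by (\ref{Eq: Pencils same}) we have $B(\xbar) \succeq 0$; evaluating the gradient and Hessian of $p$ at $(\xbar, 0_k)$, as computed in the proof of Theorem~\ref{Thm: Cubic Spectrahedron}, yields $\grad p(\xbar, 0_k) = 0$ and a Hessian $\Hess p(\xbar, 0_k)$ which is block diagonal with a zero top-left block and $2B(\xbar)$ as its bottom-right block, hence psd. Therefore $(\xbar, 0_k) \in SO_p$. Conversely, if $(\xbar, \ybar) \in SO_p$, then the lower-right $k \times k$ block $B(\xbar)$ of $\Hess p(\xbar, \ybar)$ is a principal submatrix of a psd matrix and is therefore psd, so by (\ref{Eq: Pencils same}) we get $A(\xbar) \succeq 0$.

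Given this reduction, any polynomial-time (respectively, NP) algorithm for deciding existence of a second-order point of a cubic polynomial immediately produces a polynomial-time (respectively, NP) algorithm for SDPF. I do not anticipate a serious obstacle: the construction and most of its analysis are already contained in the proof of Theorem~\ref{Thm: Cubic Spectrahedron}, and the only additional ingredient is the observation that the distinguished candidate points of the form $(\xbar, 0_k)$ simultaneously certify membership in the spectrahedron and satisfy the first- and second-order conditions for $p$.
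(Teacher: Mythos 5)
Your reduction is correct, and the high-level idea (encode the SDPF instance as the cubic $y^T M(x) y$ for a suitable affine pencil $M$) is the same as the paper's. The only difference is that you invoke the more elaborate construction from Theorem~\ref{Thm: Cubic Spectrahedron}: you compute a rational basis $V$ for the orthogonal complement of $\nulls_A$ and form the reduced pencil $B(x) = V^T A(x) V$ before building $p$. This extra step is unnecessary here. The paper simply takes $p(x,y) = y^T A(x) y$ directly (with $y \in \R^m$), observing that $A(x)$ is a principal submatrix of $\Hess p(x,y)$, so $\Hess p(x,y) \succeq 0$ forces $A(x) \succeq 0$; conversely, if $A(\xbar) \succeq 0$, then $(\xbar, 0_m)$ is a second-order point since $\grad p(\xbar, 0_m) = 0$ and $\Hess p(\xbar, 0_m)$ is block-diagonal with blocks $0$ and $2A(\xbar)$. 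The passage from $A$ to the non-degenerate pencil $B$ matters in Theorem~\ref{Thm: Cubic Spectrahedron} and in Theorem~\ref{Thm: LM SDPSF} because there one needs the \emph{interior} of the spectrahedron to match up with the \emph{local minima} of $p$, and that correspondence breaks down when $\nulls_A \ne \{0_m\}$. For second-order points, no such issue arises, so you can drop the $V$-reduction and save yourself the null-space computation.
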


\begin{proof}
    Given matrices $A_0, \ldots, A_n \in \mathbb{S}^{m \times m}$, let $A(x) \defeq A_0 + \sum_{i=1}^n x_iA_i$. By noting that the cubic polynomial $p(x,y) = y^TA(x)y$ has as its Hessian
    $$\Hess p(x,y) = 2\bmat 0 & B(y)^T \\ B(y) & A(x) \emat,$$
	where $B(y)$ is an $m \times n$ matrix whose $i$-th column equals $A_iy$, we can see that if $A(\xbar) \succeq 0$ for some $\xbar \in \Rn$, then $\Hess p(\xbar, 0_k) \succeq 0$. Since $p$ is quadratic in the variables $y$, $\grad p(\xbar, 0_k) = 0_{m+n}$, and hence $(\xbar, 0_k)$ is a second-order point of $p$. Conversely, if $A(x) \not\succeq 0$ for any $x \in \Rn$, then clearly $\Hess p(x, y) \not\succeq 0$ for any $x \in \Rn$ and $y \in \R^m$, and thus $p$ cannot have any second-order points.
	
	The above reduction shows that any polynomial-time algorithm (or polynomial-time verifiable certificate) for existence of second-order points of cubic polynomials translates into one for SDPF.
\end{proof}

\begin{theorem}\label{Thm: LM SDPSF}
	If the problem of deciding whether a cubic polynomial has any local minima is in P (resp. NP), then SDPSF is in P (resp. NP).
\end{theorem}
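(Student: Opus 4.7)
The plan is to reduce SDPSF to deciding existence of a local minimum of a cubic polynomial in polynomial time, in the same spirit as Theorem~\ref{Thm: SOP SDPF}. The main tool is the second half of Theorem~\ref{Thm: Cubic Spectrahedron}, which identifies the interior of a spectrahedron with the projection of local minima of a canonical cubic polynomial.

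Given an instance of SDPSF specified by $m \times m$ rational symmetric matrices $A_0, \ldots, A_n$, set $A(x) \defeq A_0 + \sum_{i=1}^n x_i A_i$. I first perform a preprocessing step, computing $\nulls_A \defeq \bigcap_{i=0}^n \nulls(A_i)$ by solving a single homogeneous linear system in polynomial time. If $\nulls_A \ne \{0\}$, I output NO: any nonzero $v \in \nulls_A$ satisfies $A(x) v = 0$ for every $x$, so $A(x)$ can never be strictly positive definite, and SDPSF is infeasible.

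Otherwise $\nulls_A = \{0\}$, and I feed the (same) matrices $A_0, \ldots, A_n$ into the construction of Theorem~\ref{Thm: Cubic Spectrahedron}. In this case one may take $V = I$ and $B_i = A_i$, so the constructed cubic polynomial in $n+m$ variables is simply
\[
p(x,y) \defeq y^T A(x) y, \qquad (x,y) \in \R^n \times \R^m.
\]
The second conclusion of that theorem then reads $LM_p = \{x \in \R^n : A(x) \succ 0\} \times \{0_m\}$, and so $p$ has a local minimum if and only if the SDPSF instance is feasible. Since the entire reduction runs in polynomial time in the input size, any polynomial-time algorithm (or NP-verifiable certificate) for existence of a local minimum of a cubic polynomial yields one for SDPSF.

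I do not anticipate a serious obstacle. The only point requiring care is the preprocessing: one must recognize that the strict-feasibility set $\{x : A(x) \succ 0\}$ coincides with the interior of $\{x : A(x) \succeq 0\}$ precisely when $\nulls_A = \{0\}$. Without the preprocessing, a nontrivial common null space could still yield local minima of $p$ (corresponding to the relative interior of the spectrahedron) even though SDPSF is infeasible. Once that degenerate case is separated out, the statement follows immediately from Theorem~\ref{Thm: Cubic Spectrahedron}.
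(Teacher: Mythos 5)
Your proposal is correct and follows essentially the same approach as the paper: both check $\nulls_A = \{0\}$ by linear algebra (returning infeasible otherwise), and then reduce the remaining strict-feasibility question to the existence of a local minimum of $p(x,y) = y^T A(x) y$ via the second claim of Theorem~\ref{Thm: Cubic Spectrahedron} with $V = I$.
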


\begin{proof}
	Given matrices $A_0, \ldots, A_n \in \mathbb{S}^{m \times m}$, let $A(x) \defeq A_0 + \sum_{i=1}^n x_iA_i$ and consider the set $S \defeq \{x \in \Rn\ |\ A(x) \succeq 0\}$. It is not difficult to see that there exists $\xbar \in \Rn$ such that $A(\xbar) \succ 0$ if and only if $S$ has a nonempty interior and $\nulls_A \defeq \nulls(A_0) \cap \nulls(A_1) \cap \nulls(A_2) \ldots \cap \nulls(A_n) = \{0_m\}$.\footnote{The ``only if'' direction is straightforward and the ``if'' direction follows from \cite[Corollary 5]{ramana1995some}.} The latter condition can be checked in polynomial time by solving linear systems. The former can be reduced---due to the second claim of Theorem \ref{Thm: Cubic Spectrahedron}---to deciding if the cubic polynomial constructed in (\ref{Eq: Cubic with LM}) has a local minimum. Note that the polynomial in (\ref{Eq: Cubic with LM}) has coefficients polynomially sized in the entries of the matrices $A_i$, since the matrix $V$ in the proof of Theorem \ref{Thm: Cubic Spectrahedron} can be taken to be the identity matrix when $\nulls_A = \{0_m\}$.
\end{proof}

In addition to the difficulties alluded to in the above two theorems, the following three examples point to concrete representation issues that one encounters in the Turing model when dealing with local minima of cubic polynomials. The same complications are known to arise for SDP feasibility problems~\cite{de2006aspects}.

\begin{example}
	{\bf A cubic polynomial with only irrational local minima.} Consider the univariate cubic polynomial $p(x) = x^3 - 6x$. One can easily verify that its unique local minimum is at $x=\sqrt{2},$ which is irrational even though the coefficients of $p$ are rational.
\end{example}

\begin{example}
{\bf A cubic polynomial with an irrational convexity region.} Consider the quintary cubic polynomial $p(x,y) = y^TA(x)y$, where
$$A(x) = \bmat 2 & x & 0 & 0 \\
x & 1 & 0 & 0 \\
0 & 0 & 2x & 2 \\
0 & 0 & 2 & x\emat.$$
One can easily verify that $x = \sqrt{2}$ is the only scalar satisfying $A(x) \succeq 0$. Since the matrix $2A(x)$ is a principal submatrix of $\Hess p(x,y)$, any point in the convexity region of $p$ must satisfy $x = \sqrt{2}$ (even though the coefficients of $p$ are rational).
\end{example}

\begin{example}
	{\bf A family of cubic polynomials whose local minima have exponential bitsize.} Consider the family of cubic polynomials $p_n(x,y) = y^TA_n(x)y$ in $3n$ variables, where
	$$A_n(x) = \bmat x_1 & 2 & 0 & 0 & \cdots & 0 & 0\\
	2 & 1 & 0 & 0 & \cdots & 0 & 0 \\
	0 & 0 & x_2 & x_1 & \cdots & 0 & 0\\
	0 & 0 & x_1 & 1 & \cdots & 0 & 0 \\
	\cdots & \cdots & \cdots & \cdots & \ddots & \cdots & \cdots \\
	0 & 0 & 0 & 0 & \cdots & x_n & x_{n-1}\\
	0 & 0 & 0 & 0 & \cdots & x_{n-1} & 1\emat.$$
	We show that even though these polynomials have some rational local minima, it takes exponential time to write them down. From the proof of Theorem \ref{Thm: Cubic Spectrahedron}, one can infer that the set of local minima of $p_n$ is the set $\{x \in \Rn\ |\ A_n(x) \succ 0\} \times \{0_{2n}\}$. However, observe that to have $A_n(x) \succ 0$ (or even $A_n(x) \succeq 0)$, we must have $$x_1 \ge 4, x_2 \ge 16, \ldots, x_n \ge 2^{2^n}.$$  Hence, any local minimum of $p_n$ has bit length at least $O(2^n)$ even though the bit length of the coefficients of $p_n$ is $O(n)$.
\end{example}

\section{Finding Local Minima of Cubic Polynomials}\label{Sec: Finding Local Min}

In this section, we derive an SDP-based approach for finding second-order points and local minima of cubic polynomials. This, along with the results established in Section \ref{Sec: NP-hardness results}, will complete the entries of Table \ref{Table: Complexity Existence} from Section~\ref{Sec: Introduction}. We begin with some preliminaries that are needed to present the theorems of this section.

\subsection{Preliminaries from Semidefinite and Sum of Squares Optimization}\label{SSec: SDP Prelims}

\subsubsection{The Oracle E-SDP}\label{SSSec: ESDP}

Recall that a \emph{spectrahedron} is a set of the type
$$\left\{x \in \Rn\ |\ A_0 + \sum_{i=1}^n x_iA_i \succeq 0\right\},$$
where $A_0, \ldots, A_n$ are symmetric matrices of some size $m \times m$. A \emph{semidefinite representable set} (also known as a \emph{spectrahedral shadow}) is a set of the type
\beq\label{Eq: SDR Set}\left\{x \in \Rn\ |\ \exists y \in \R^k \text{ such that } A_0 + \sum_{i=1}^n x_iA_i + \sum_{i=1}^k y_iB_i \succeq 0\right\},\eeq
for some integer $k \ge 0$ and symmetric $m \times m$ matrices $A_0, A_1, \ldots, A_n, B_1, \ldots, B_k$. These are exactly sets which semidefinite programming can optimize over.

We show in Theorem \ref{Thm: solution recovery sdp} and Corollary \ref{Cor: Complete Cubic SDP SOP} that the set of second-order points of any cubic polynomial is a spectrahedron and describe how a description of this spectrahedron can be obtained from the coefficients of $p$ only.\footnote{Recall that the results of Section \ref{Sec: Geometry} by contrast established spectrahedrality of the set of second-order points under the assumption of existence of a local minimum (see Remark \ref{Rem: SOP Spectrahedron}). Furthermore, the spectrahedral representation that we gave there (see Theorem \ref{Thm: WLM Closure Spectrahedron}) required knowledge of a local minimum.} Since relative interiors of semidefinite representable sets (and in particular spectrahedra) are semidefinite representable \cite[Theorem 3.8]{netzer2010semidefinite}, it follows from our Theorem \ref{Thm: Local Minima SOP} that the set of local minima of any cubic polynomial is semidefinite representable.

Due to the complexity results and representation issues presented in Section \ref{Sec: Complexity}, we assume in this section that we can do arithmetic over real numbers and have access to an oracle which solves SDPs exactly. This oracle---which we call \emph{E-SDP}---takes as input an SDP with real data and outputs the optimal value as a real number if it is finite, or reports that the SDP is infeasible, or that it is unbounded.\footnote{Though this will not be needed for our purposes, it is straightforward to show that for an SDP with $n$ scalar variables, the oracle E-SDP can be called twice to test attainment of the optimal value, and a total of $n+1$ times to recover an optimal solution.} The following lemma shows that E-SDP can find a point in the relative interior of a semidefinite representable set. This will be relevant for us later in this section when we search for local minima of cubic polynomials.

\begin{lem}\label{Lem: Relint recovery}
	Let $S$ be a nonempty semidefinite representable set in $\R^n$. Then a point in $ri(S)$ can be recovered in $2n$ calls to E-SDP.
\end{lem}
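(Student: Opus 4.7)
The plan is to build the desired point $\bar{x}^*$ one coordinate at a time by iteratively slicing $S$ with coordinate hyperplanes. I will maintain a nested sequence
$$S^{(0)} \defeq S, \qquad S^{(i)} \defeq S^{(i-1)} \cap \{x \in \R^n : x_i = \bar{x}_i^*\},$$
where the scalars $\bar{x}_i^*$ are chosen adaptively. Since $S$ is semidefinite representable, each $S^{(i-1)}$ is too (obtained by appending finitely many linear equalities), so optimizing the linear functional $x_i$ over $S^{(i-1)}$ is a bona fide SDP that can be handed to E-SDP.

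At iteration $i$, I call E-SDP twice to obtain $M_i \defeq \sup\{x_i : x \in S^{(i-1)}\}$ and $m_i \defeq \inf\{x_i : x \in S^{(i-1)}\}$, each possibly $\pm\infty$. If $m_i = M_i$, I set $\bar{x}_i^* \defeq m_i$; otherwise I pick any finite value strictly between $m_i$ and $M_i$ (e.g., the midpoint when both are finite, or $m_i+1$, $M_i-1$, $0$ to handle infinite endpoints). After $n$ iterations, the algorithm outputs $\bar{x}^* \defeq (\bar{x}_1^*, \ldots, \bar{x}_n^*)$ using a total of $2n$ oracle calls.

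Correctness will follow by induction on $i$, showing that $ri(S^{(i)})$ is non-empty and contained in $ri(S^{(i-1)})$. The projection of $ri(S^{(i-1)})$ onto the $i$-th coordinate equals the relative interior of the image of $S^{(i-1)}$ under that projection, which by standard convex analysis is precisely the open interval $(m_i, M_i)$ (collapsing to $\{m_i\}$ when $m_i = M_i$). Our choice of $\bar{x}_i^*$ therefore lies in this projection, so the hyperplane $H_i \defeq \{x : x_i = \bar{x}_i^*\}$ meets $ri(S^{(i-1)})$. Invoking the standard identity $ri(C \cap H) = ri(C) \cap H$ whenever a hyperplane $H$ meets $ri(C)$ gives $ri(S^{(i)}) = ri(S^{(i-1)}) \cap H_i \ne \emptyset$ and $ri(S^{(i)}) \subseteq ri(S^{(i-1)})$. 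After $n$ steps $S^{(n)} = \{\bar{x}^*\}$ is a singleton equal to its own relative interior, yielding $\bar{x}^* \in ri(S)$.

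The main subtlety I expect is that a semidefinite representable set need not be closed, so the values $m_i$ and $M_i$ reported by the oracle may be extrema that are not attained on $S^{(i-1)}$. The slicing argument sidesteps this entirely because projections of relative interiors are always open intervals, and a strictly interior finite value for $\bar{x}_i^*$ can be read off from the numerical data returned by E-SDP without any appeal to attainment.
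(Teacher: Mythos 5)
Your algorithm is exactly the one in the paper, and the claimed conclusion $2n$ calls is correct; the difference is in how the invariant is established. The paper tracks the invariant ``$S_k \cap ri(S) \ne \emptyset$'' and proves it by explicitly constructing sequences $\{z_j\}, \{w_j\} \subseteq S_k \cap ri(S)$ whose $k$-th coordinates converge to $\overline{x}_k$ and $\underline{x}_k$ respectively (using the fact that $\lambda z + (1-\lambda)y \in ri(S)$ for $z \in ri(S)$, $y \in \bar{S}$, $\lambda \in (0,1]$), then appealing to convexity to find a point at $x_k = x_k^*$. You instead invoke two off-the-shelf identities from convex analysis: that a linear map commutes with relative interior, i.e.\ $\pi_i(ri(C)) = ri(\pi_i(C))$, and that $ri(C_1 \cap C_2) = ri(C_1) \cap ri(C_2)$ when $ri(C_1) \cap ri(C_2) \ne \emptyset$ (specialized to $C_2$ an affine set $H_i$). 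Since your nested identity $ri(S^{(i)}) = ri(S^{(i-1)}) \cap H_i$ chains to $ri(S^{(i)}) = S^{(i)} \cap ri(S)$, your inductive invariant is in fact the same set as the paper's, but the argument is shorter and avoids the $\epsilon$-sequence bookkeeping. Both proofs correctly handle the non-attainment subtlety (SDR sets need not be closed): you do so by noting that $ri(\pi_i(S^{(i-1)})) = (m_i, M_i)$ is an open interval independent of attainment, and the paper handles it by taking limits within $S_k$ rather than assuming a maximizer exists. Your version is correct and, if anything, the cleaner exposition.
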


\begin{proof}
    Consider the following procedure. Let $S_1 = S$, and for $i \in \otn$ let $$S_{i+1} = S_i \cap \{x \in \Rn\ |\ x_i = x_i^*\},$$ where the scalar $x_i^*$ is chosen to be any ``intermediate'' value of $x_i$ on $S_i$. More precisely, let $\xbar_i$ (resp. $\underline{x}_i$) be the supremum (resp. infimum) of $x_i$ over $S_i$ (these two values may or may not be finite). If $\bar{x}_i = \underline{x}_i$, then set $x_i^* = \bar{x}_i$. Otherwise, set $x_i^*$ to be any scalar satisfying $\underline{x}_i < x_i^* < \bar{x}_i$. Note that for each $i$, $x_i^*$ can be computed using $2$ calls to E-SDP. Hence, after $2n$ calls to E-SDP, we arrive at a set $S_{n+1}$ which is a singleton by construction. 
		
	We next show, by induction, that the point in $S_{n+1}$ belongs to $ri(S)$. First note that as $S$ is nonempty, $ri(S)$ is nonempty~\cite[Theorem 6.2]{rockafellar1970convex}, which implies that $S_1 \cap ri(S) = ri(S)$ is nonempty. Now suppose that $S_i \cap ri(S)$ is nonempty for $i \in \{1,\ldots, k\}$. We show that $S_{k+1} \cap ri(S)$ is nonempty.
	
	First suppose that $k$ is such that $\bar{x}_k = x_k^* = \underline{x}_k$. In this case, because $\forall x \in S_k, x_k = x_k^*$,
	$$S_{k+1} \cap ri(S) = S_k \cap \{x \in \Rn\ |\ x_k = x_k^*\} \cap ri(S) = S_k \cap ri(S) \ne \emptyset.$$
	Now suppose that $\underline{x}_k < x_k^* < \bar{x}_k$. 
	By the definition of $\xbar_k$, there exists a sequence of points $\{y_j\} \subseteq S_k$ such that $(y_j)_k \to \bar{x}_k$. We recall that for any $z \in ri(S), y \in \bar{S}$, and $\lambda \in (0,1]$, $\lambda z + (1-\lambda)y \in ri(S)$~\cite[Theorem 6.1]{rockafellar1970convex}. Now let $z \in S_k \cap ri(S)$. Since $S_k$ is convex, for any $y \in S_k \cap \bar{S}$ and $\lambda \in (0,1]$, $\lambda z + (1-\lambda)y \in S_k \cap ri(S)$. In particular, since $S_k \cap \bar{S} = S_k$, the sequence $\{z_j\} \defeq \{\frac{1}{j}z + \frac{j-1}{j}y_j\}$ satisfies $\{z_j\} \subseteq S_k \cap ri(S)$ and $(z_j)_k \to \xbar_k$. Similarly, there exists a sequence of points $\{w_j\} \subseteq S_k \cap ri(S)$ such that $(w_j)_k \to \underline{x}_k$. As $S_k \cap ri(S)$ is convex, there must then be a point $x \in S_k \cap ri(S)$ satisfying $x_k = x_k^*$, and so $$S_{k+1} \cap ri(S) = S_k \cap \{x \in \Rn\ |\ x_k = x_k^*\} \cap ri(S)$$ is not empty.
\end{proof}

\subsubsection{Overview of Sum of Squares Polynomials}\label{SSSec: Sos}
In order to describe our SDP-based approach for finding local minima of cubic polynomials, we also need to briefly review the connection between sum of squares polynomials and matrices to semidefinite programming. We remark that in related work \cite{nie2015hierarchy}, the author produces a hierarchy of SDPs of growing size, based also on the connection with sum of squares polynomials, which allows him to find local minima of polynomials of any degree in the limit of his hierarchy. However, no claims are established on the level of the hierarchy needed to recover a local minimum (except for finiteness under some assumptions). Our contribution is to derive a new SDP relaxation for the case of cubic polynomials, which has small size, and is guaranteed to find a local minimum.


We say that a (multivariate) polynomial $p:\R^n\to\R$ is \emph{nonnegative} if $p(x) \ge 0, \forall x \in \R^n$. A polynomial $p$ is said to be a \emph{sum of squares} (sos) if $p= \sum_{i=1}^r q_i^2$ for some polynomials $q_1,\ldots,q_r$. This is an algebraic sufficient, but in general not necessary~\cite{Hilbert_1888}, condition for nonnegativity of a polynomial. While deciding nonnegativity of a polynomial is in general NP-hard (see, e.g.,~\cite{murty1987some}), one can decide whether a polynomial is sos via semidefinite programming. This is because a polynomial $p$ of degree $2d$ in $n$ variables is a sum of squares if and only if there exists an ${n+d \choose d} \times {n+d \choose d}$ positive semidefinite matrix $Q$ satisfying the identity
\beq \label{Eq: Sos Gram} p(x) = z(x)^TQz(x),\eeq
where $z(x)$ denotes the vector of all monomials in $x$ of degree less than or equal to $d$. Note that because of this equivalence, one can also require a polynomial $p$ with unknown coefficients to be sos in a semidefinite program. Given a rank-$r$ psd matrix $Q$ that satisfies (\ref{Eq: Sos Gram}), one can write $Q$ as $\sum_{i=1}^r v_iv_i^T$ (e.g. via a Cholesky or an eigenvalue factorization), and obtain an sos decomposition of $p$ as $p=\sum_{i=1}^r (v_i^Tz(x))^2$.

The notion of sum of squares also extends to polynomial matrices (i.e., matrices whose entries are multivariate polynomials). We say that symmetric polynomial matrix $M(x):\Rn \to \R^m \times \R^m$ is an \emph{sos-matrix} if it has a factorization as $M(x)=R(x)^TR(x)$ for some $r\times m$ polynomial matrix $R(x)$~\cite{helton2010semidefinite}. Observe that if $M$ is an sos-matrix, then $M(x) \succeq 0$ for any $x \in \Rn$. One can check that $M(x)$ is an sos-matrix if and only if the scalar-valued polynomial $y^TM(x)y$ in variables $(x_1,\ldots,x_n,y_1,\ldots,y_m)$ is sos. Indeed, the ``only if'' direction is clear, the ``if'' direction is because when $y^TM(x)y = \sum_{i=1}^r q_i^2(x,y)$ for some polynomials $q_1, \ldots, q_r$, each $q_i$ must be linear in $y$ and thus writable as $q_i(x)=\sum_{j=1}^m y_jq_{ij}(x)$ for some polynomials $q_{ij}$. Then if $R(x)$ is the $r \times m$ matrix where $R_{ij}(x) = q_{ij}(x)$, we will have $M(x) = R^T(x)R(x)$.

\subsection{A Sum of Squares Approach for Finding Second-Order Points}\label{SSec: SDP Approach}

We have shown in Theorem \ref{Thm: Closure} that if a cubic polynomial $p$ has a second-order point, the solutions of the optimization problem in (\ref{Eq: convex pop}) exactly form the set $SO_p$ of its second-order points. The same theorem further showed that if $p$ has a local minimum, then the solutions of (\ref{Eq: convex pop}) also coincide with $\overline{LM_p}$, i.e. the closure of the set of its local minima. Our goal in this section is to develop a semidefinite representation of $SO_p$ which can be obtained directly from the coefficients of $p$ (Corollary \ref{Cor: Complete Cubic SDP SOP}). To arrive to this representation, we first present an sos relaxation of problem (\ref{Eq: convex pop}), which we prove to be tight when $SO_p$ is nonempty (Theorem \ref{Thm: cubic sdp}). We then provide a more efficient representation of the SDP underlying this sos relaxation in Section \ref{SSec: Simplification}. This will lead to an algorithm (Algorithm~\ref{Alg: Complete Cubic SDP}) for finding local minima of cubic polynomials which is presented in Section \ref{SSSec: SOP and Algorithm}.

\begin{theorem}\label{Thm: cubic sdp}
	If a cubic polynomial $p: \Rn \to \R$ has a second-order point, the optimal value of the following semidefinite program\footnote{To clarify, $x$ is not a decision variable in this problem. The decision variables are $\gamma$, the coefficients of $\sigma$, and the coefficients of the entries of $S$. The identity in the first constraint must hold for all $x$, and this can be enforced by matching the coefficient of each monomial on the left with the corresponding coefficient on the right.} is attained and is equal to the value of $p$ at all second-order points:
	
	\begin{equation}\label{Eq: cubic sdp}
    \begin{aligned}
	& \underset{\gamma \in \R, \sigma(x), S(x)}{\sup}
	& & \gamma \\
	& \text{\emph{subject to}}
	&& p(x) - \gamma = \sigma(x) + \Tr(S(x)\Hx),\\
	&&& \sigma(x) \text{\emph{ is a degree-2 sos polynomial}},\\
	&&& S(x) \text{\emph{ is an }} n \times n \text{\emph{ sos-matrix with degree-2 entries.}}\\
	\eaeql
	
\end{theorem}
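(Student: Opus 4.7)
The plan is to handle the two directions separately. Let $p^*$ denote the common value of $p$ at any second-order point; this is well-defined since by Corollary \ref{Cor: optval convex pop}, $p^*$ is the optimal value of (\ref{Eq: convex pop}). I first argue weak duality, that every feasible $\gamma$ satisfies $\gamma \le p^*$, and then exhibit an explicit feasible solution attaining $\gamma = p^*$.

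For weak duality, suppose $(\gamma, \sigma, S)$ is feasible. Since $S(x)$ is an sos-matrix, $S(x) \succeq 0$ for every $x \in \Rn$, so $\Tr(S(x) \Hx) \ge 0$ on the convexity region $CR_p$. Combined with $\sigma(x) \ge 0$, the defining identity forces $p(x) \ge \gamma$ on $CR_p$. By Corollary \ref{Cor: optval convex pop}, the infimum of $p$ over $CR_p$ equals $p^*$, hence $\gamma \le p^*$.

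For attainment, I fix a second-order point $\xbar$, set $A \defeq \Hxb$, and propose the solution
$$\gamma = p(\xbar), \quad \sigma(x) = \tfrac{1}{3}(x - \xbar)^T A (x - \xbar), \quad S(x) = \tfrac{1}{6}(x - \xbar)(x - \xbar)^T.$$
Feasibility of the sos-type constraints is immediate: since $A \succeq 0$ by the SONC, a Cholesky factorization $A = R^T R$ yields $\sigma(x) = \tfrac{1}{3} \|R(x - \xbar)\|^2$, an sos polynomial of degree $2$; and the rank-one outer product $S(x)$ factors as $v(x)^T v(x)$ with $v(x) = \tfrac{1}{\sqrt{6}}(x - \xbar)^T$, so it is an $n \times n$ sos-matrix with degree-$2$ entries.

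The main step is verifying the polynomial identity $p(x) - p(\xbar) = \sigma(x) + \Tr(S(x) \Hx)$. Writing the Hessian in its affine form $\Hx = A + \sum_{i=1}^n (x_i - \xbar_i) H_i$, the trace term splits into the constant-Hessian piece $\tfrac{1}{6}(x - \xbar)^T A (x - \xbar)$ plus $\tfrac{1}{6}(x - \xbar)^T \bigl(\sum_i (x_i - \xbar_i) H_i\bigr)(x - \xbar)$, and Euler's identity applied to the degree-$3$ homogeneous polynomial $p_3$ rewrites the second piece as $p_3(x - \xbar)$, since $y^T (\sum_i y_i H_i) y = y^T \Hess p_3(y) y = 6\, p_3(y)$. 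Adding $\sigma(x)$ then gives $\tfrac{1}{2}(x - \xbar)^T A (x - \xbar) + p_3(x - \xbar)$, which equals $p(x) - p(\xbar)$ by the Taylor expansion (\ref{Eq: cubic taylor}) together with $\gxb = 0$. The main obstacle is guessing the coefficients $\tfrac{1}{3}$ and $\tfrac{1}{6}$, which are dictated by how Euler's identity redistributes the cubic term between the trace slot and the $\sigma$ slot; notably, the TOC is not used anywhere in the construction, consistent with the theorem's hypothesis requiring only existence of a second-order point.
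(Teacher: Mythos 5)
Your proposal is correct and essentially matches the paper's proof: the attainment construction (the choice $\gamma = p(\xbar)$, $\sigma = \tfrac{1}{3}(x-\xbar)^T\Hxb(x-\xbar)$, $S = \tfrac{1}{6}(x-\xbar)(x-\xbar)^T$, and the algebraic verification via the Taylor expansion and the homogeneity identity $y^T(\sum_i y_iH_i)y = 6p_3(y)$) is the same. The only minor difference is in weak duality: the paper simply evaluates the constraint identity at the second-order point $\xbar$ itself, giving $p(\xbar) - \gamma = \sigma(\xbar) + \Tr(S(\xbar)\Hxb) \ge 0$ immediately, whereas you route through $CR_p$ and Corollary \ref{Cor: optval convex pop} --- both valid, but the paper's version avoids any dependence on the earlier geometric machinery.
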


\begin{proof}
Let $\xbar$ be a second-order point of $p$ and $\gamma^*$ be the optimal value of (\ref{Eq: cubic sdp}). Consider any feasible solution $(\gamma, \sigma, S)$ to (\ref{Eq: cubic sdp}) (nonemptiness of the feasible set is established in the next paragraph). Since $\Hxb \succeq 0$ and $S(\xbar) \succeq 0$, we have $\Tr(\Hxb S(\xbar)) \ge 0$. Since $\sigma(\xbar) \ge 0$ as well, it follows that $p(\xbar) \ge \gamma$. Hence, $p(\xbar) \ge \gamma^*$.

To show that $p(\xbar) \le \gamma^*$ and that the value $\gamma^* = p(\xbar)$ is attained, we establish that
$$(\gamma,\sigma,S) = \left(p(\xbar), \frac{1}{3}(x-\xbar)^T\Hxb(x-\xbar), \frac{1}{6}(x-\xbar)(x-\xbar)^T\right)$$
is feasible to (\ref{Eq: cubic sdp}). Note that $\frac{1}{3}(x-\xbar)^T\Hxb(x-\xbar)$ is an sos polynomial (as $\Hxb$ can be factored into $V^TV$), and that $\frac{1}{6}(x-\xbar)(x-\xbar)^T$ is an sos-matrix by construction. To show that the first constraint in (\ref{Eq: cubic sdp}) is satisfied, consider the Taylor expansion of $p$ around $\xbar$ in the direction $x - \xbar$ (see (\ref{Eq: cubic taylor}), noting that $\gxb = 0$):
\beq \label{Eq: SOP cubic taylor}p(\xbar + (x-\xbar)) = p(\xbar) + \frac{1}{2}(x-\xbar)^T\Hxb(x-\xbar) + p_3(x-\xbar).\eeq
Observe that if $p$ is written in the form (\ref{Eq: Cubic Poly Form}), then we have
\begin{align*}
p_3(x-\xbar) &= \frac{1}{6}(x-\xbar)^T\left(\sum_{i=1}^n (x_i-\xbar_i)H_i\right)(x-\xbar)\\
&=\frac{1}{6}(x-\xbar)^T\left(\sum_{i=1}^n (x_i-\xbar_i)H_i + Q - Q\right)(x-\xbar)\\
&=\frac{1}{6}(x-\xbar)^T\left(\sum_{i=1}^n x_iH_i+ Q - \sum_{i=1}^n \xbar_iH_i - Q\right)(x-\xbar)\\
&=\frac{1}{6}(x-\xbar)^T\Hx(x-\xbar) - \frac{1}{6}(x-\xbar)^T\Hxb(x-\xbar).
\end{align*}
Note further that due to the cyclic property of the trace, we have
$$\frac{1}{6}(x-\xbar)^T\Hx(x-\xbar) = \Tr\left((\frac{1}{6}(x-\xbar)(x-\xbar)^T)\Hx\right).$$
Hence, (\ref{Eq: SOP cubic taylor}) reduces to the following identity
\beq\label{Eq: cubic sos identity}
	p(x) - p(\xbar) = \frac{1}{3}(x-\xbar)^T\Hxb(x-\xbar) + \Tr\left((\frac{1}{6}(x-\xbar)(x-\xbar)^T)\Hx\right),
\eeq
and thus the claim is established.
\end{proof}

Since (\ref{Eq: cubic sdp}) is a tight sos relaxation of (\ref{Eq: convex pop}) when $SO_p$ is nonempty, it is interesting to see how an optimal solution to (\ref{Eq: convex pop}) can be recovered from an optimal solution to (\ref{Eq: cubic sdp}). This is shown in the next theorem, keeping in mind that optimal solutions to (\ref{Eq: convex pop}) are second-order points of $p$ (see Theorem \ref{Thm: Closure}).

\begin{theorem}\label{Thm: solution recovery sdp}
		Let $p: \Rn \to \R$ be a cubic polynomial with a second-order point, and let $(\gamma^*, \sigma^*, S^*)$ be an optimal \edit{solution\footnote{\edit{By Theorem~\ref{Thm: cubic sdp}, for any cubic polynomial with a second-order point, an optimal solution to (\ref{Eq: cubic sdp}) exists.}}} of (\ref{Eq: cubic sdp}) applied to $p$. Then, the set
		\beq\label{Eq: second order points}
		\Gamma \defeq \{x \in \Rn\ |\ \Hx \succeq 0, \sigma^*(x) = 0, \Tr(S^*(x)\Hx) = 0\}
		\eeq
		is a spectrahedron, and $\Gamma = SO_p$.
\end{theorem}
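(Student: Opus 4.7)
The plan is to prove the claim in two parts: the set identity $\Gamma = SO_p$, and the spectrahedrality of $\Gamma$. Both parts revolve around the primal identity from (\ref{Eq: cubic sdp}),
$$p(x) - \gamma^* = \sigma^*(x) + \Tr(S^*(x)\Hx),$$
combined with Theorem~\ref{Thm: cubic sdp}, which says that $\gamma^* = p(\ybar)$ at every second-order point $\ybar$.

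I would first show $SO_p \subseteq \Gamma$. Given $\ybar \in SO_p$, the SONC gives $\Hess p(\ybar) \succeq 0$, and Theorem~\ref{Thm: cubic sdp} gives $p(\ybar) = \gamma^*$, so the identity evaluated at $\ybar$ reads $\sigma^*(\ybar) + \Tr(S^*(\ybar)\,\Hess p(\ybar)) = 0$. Both summands are nonnegative (the first because $\sigma^*$ is sos, the second because the trace of a product of two PSD matrices is nonnegative), so both must vanish, placing $\ybar$ in $\Gamma$. For the reverse inclusion, given $\ybar \in \Gamma$, the identity together with the two vanishing conditions forces $p(\ybar) = \gamma^*$; combined with $\Hess p(\ybar) \succeq 0$, this makes $\ybar$ a minimizer of (\ref{Eq: convex pop}) by Corollary~\ref{Cor: optval convex pop}, and hence $\ybar \in SO_p$ by Theorem~\ref{Thm: Closure}.

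For spectrahedrality, I would re-encode each defining constraint. The LMI $\Hx \succeq 0$ is already in the desired form. Because $\sigma^*$ is a degree-2 sos polynomial, the Gram representation from Section~\ref{SSSec: Sos} gives $\sigma^*(x) = \sum_j \ell_j(x)^2$ for some affine polynomials $\ell_j$, so $\sigma^*(x) = 0$ is equivalent to the affine system $\ell_j(x) = 0$. Analogously, the sos-matrix factorization lets me write $S^*(x) = T(x)^T T(x)$ with $T(x)$ having affine-in-$x$ entries; denoting its rows by $t_i(x)$, we get $\Tr(S^*(x)\Hx) = \sum_i t_i(x)^T \Hx t_i(x)$, a sum of nonnegative terms on $\{\Hx \succeq 0\}$, so on this LMI the trace constraint is equivalent to the vector equations $\Hx t_i(x) = 0$ for all $i$.

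The main obstacle is that each $\Hx t_i(x) = 0$ is nominally a quadratic vector equation in $x$. I plan to resolve this by leveraging the explicit optimal solution produced in the proof of Theorem~\ref{Thm: cubic sdp}: taking $\sigma^*(x) = \tfrac{1}{3}(x-\xbar)^T \Hxb (x-\xbar)$ and $S^*(x) = \tfrac{1}{6}(x-\xbar)(x-\xbar)^T$ for a fixed $\xbar \in SO_p$, the equation $\sigma^*(x) = 0$ collapses to the linear condition $\Hxb(x-\xbar) = 0$, cutting the problem down to the affine slice $\xbar + \nulls(\Hxb)$. On this slice, the decomposition $\Hx = \Hxb + \sum_j (x_j - \xbar_j) H_j$ together with the definition of $p_3$ yields the scalar identity $(x-\xbar)^T \Hx (x-\xbar) = 6\, p_3(x-\xbar)$, so the remaining trace constraint becomes $p_3(x-\xbar) = 0$. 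Packaging the LMI $\Hx \succeq 0$ and the affine equations $\Hxb(x-\xbar) = 0$ as a single block-diagonal LMI, and using the nonnegativity of $p_3(x-\xbar)$ on the resulting spectrahedron (which forces the cubic equation to cut out a spectrahedral face), should yield the sought representation of $\Gamma$ as a spectrahedron.
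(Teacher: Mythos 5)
Your argument for $\Gamma = SO_p$ is essentially the paper's: evaluate the sos identity at a point, use nonnegativity of both summands, and then invoke Corollary~\ref{Cor: optval convex pop} and Theorem~\ref{Thm: Closure} for the reverse inclusion. That part is fine.

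The spectrahedrality argument, however, has a real gap at the last step. You correctly reduce $\sigma^*(x)=0$ to affine equations, correctly note that on $\{\Hx \succeq 0\}$ the constraint $\Tr(S^*(x)\Hx)=0$ is equivalent to the null-space conditions $\Hx\, r_i(x) = 0$, and correctly observe that these are quadratic in $x$. Passing to the explicit optimal solution $\sigma^*(x) = \tfrac13(x-\xbar)^T\Hxb(x-\xbar)$, $S^*(x)=\tfrac16(x-\xbar)(x-\xbar)^T$ is a legitimate move (once $\Gamma=SO_p$ is known, showing \emph{any} such representation is a spectrahedron suffices). But after reducing $\sigma^*(x)=0$ to the affine slice $\Hxb(x-\xbar)=0$, you are still left with the constraint $(x-\xbar)^T\Hx(x-\xbar) = 0$, i.e.\ $\Hx(x-\xbar)=0$, which remains genuinely quadratic in $x$. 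Your attempt to dispatch this---``nonnegativity of $p_3(x-\xbar)$ forces the cubic equation to cut out a spectrahedral face''---is not a valid inference: $p_3(x-\xbar)$ is cubic, not affine or convex, on the slice, so its zero set intersected with the spectrahedron is not automatically a face, and even if it were you have not produced an LMI description of it. The piece you are missing is the paper's linearization trick: fix a point $y \in ri(CR_p)$ and replace the constraint $\Hx\, r_i(x) = 0$ with the \emph{affine} constraint $\Hess p(y)\, r_i(x) = 0$. These agree on $\{\Hx \succeq 0\}$ because, by the Ramana--Goldman facial-structure result (\cite[Corollary 1]{ramana1995some}), every matrix in the relative interior of the spectrahedron $CR_p$ has the same null space, so $\nulls(\Hess p(y)) \subseteq \nulls(\Hx)$ for all $x$ in $CR_p$, with equality on $ri(CR_p)$ and containment extending to the boundary by a continuity/closedness argument. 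Without that substitution you never obtain an affine-in-$x$ LMI system and the spectrahedrality claim is not established.
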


\begin{proof}

We first show that $\Gamma = SO_p$. Let $\xbar$ be a second-order point of $p$. From Theorem \ref{Thm: cubic sdp} and the first constraint of (\ref{Eq: cubic sdp}) we have
$$0 = p(\xbar) - p(\xbar) = p(\xbar) - \gamma^* = \sigma^*(\xbar) + \Tr(S^*(\xbar)\Hxb).$$
As $\sigma^*(\xbar)$ and $\Tr(S^*(\xbar)\Hxb)$ are both nonnegative, the above equation implies they must both be zero, and hence $SO_p \subseteq \Gamma$. To see why $\Gamma \subseteq SO_p$, let $\ybar$ be a point in $\Gamma$ and $\hat{x}$ be an arbitrary second-order point (which by the assumption of the theorem exists). Observe from Theorem \ref{Thm: cubic sdp} and the first constraint of (\ref{Eq: cubic sdp}) that
$$p(\ybar) - p(\hat{x}) = p(\ybar) - \gamma^* = \sigma^*(\ybar) + \Tr(S^*(\ybar)\Hess p(\ybar)) = 0.$$ Additionally, because $\Hess p(\ybar) \succeq 0$, it follows from Corollary \ref{Cor: optval convex pop} that $\ybar$ is optimal to (\ref{Eq: convex pop}), and thus is a second-order point by Theorem \ref{Thm: Closure}.
	
Now we show that $\Gamma$ is a spectrahedron by ``linearizing'' the quadratic and cubic equations that appear in (\ref{Eq: second order points}). Since $\sigma^*$ is a quadratic sos polynomial, it can be written as $\sigma^*(x) = \sum_{i=1}^m q_i^2(x)$ for some affine polynomials $q_1,\ldots,q_m$. Similarly, since $S^*$ is an sos-matrix with quadratic entries, it can be written as $S^*(x) = R(x)^TR(x)$ for some $k \times n$ matrix $R$ with affine entries. First note that as $\Hx$ is affine in $x$ and $\sigma^*$ is a sum of squares of affine polynomials, the set
$$\{x \in \Rn\ |\ \Hx \succeq 0, \sigma^*(x) = 0\} = \{x \in \Rn\ |\ \Hx \succeq 0, q_1(x) = 0,\ldots, q_m(x) = 0\}$$
is clearly a spectrahedron.

Now let $y$ be any point in $ri(CR_p)$. Such a point exists because $CR_p$ is nonempty by assumption, and relative interiors of nonempty convex sets are nonempty \cite[Theorem 6.2]{rockafellar1970convex}. Now let $r_i$ be the $i$-th column of the matrix $R^T$. We claim that $\Gamma$ is equivalent to the following set:
\beq \label{Eq: SOP SDR}\big\{x \in \Rn\ |\ \Hx \succeq 0, q_1(x) = 0, \ldots, q_m(x) = 0, \Hess p(y) r_1(x) = 0, \ldots, \Hess p(y) r_k(x) = 0\big\}.\eeq
Note that this set is a spectrahedron, and that the final $k$ equality constraints are enforcing that each column of $R^T$ be in the null space of $\Hess p(y)$.

To prove the claim, first let $x$ be in (\ref{Eq: SOP SDR}). Note that $\nulls(\Hess p(y)) \subseteq \nulls(\Hx)$, as $y \in ri(CR_p)$ and so $\Hess p(y) = \lambda \Hx + (1-\lambda) \Hess p(z)$ for some $z \in CR_p$ and $\lambda \in (0,1)$. Then,
$$\Tr(S^*(x)\Hx) = \sum_{i=1}^k r_i^T(x)\Hx r_i(x) = 0.$$
Hence $(\ref{Eq: SOP SDR}) \subseteq (\ref{Eq: second order points})$.

To show the reverse inclusion, let $x$ be a point in (\ref{Eq: second order points}). It is easy to check that $\Tr(AB) = 0$ for two psd matrices $A = C^TC$ and $B$ if and only if the columns of $C^T$ belong to the null space of $B$. Hence, we must have $r_i(x) \in \nulls (\Hx)$. Assume first that $x\in ri(CR_p)$. Then we must have $r_i(x) \in \nulls (\Hx) = \nulls (\Hess p(y))$ as $CR_p$ is a spectrahedron and any two matrices in the relative interior of a spectrahedron have the same null space \cite[Corollary 1]{ramana1995some}. To see why we must also have $r_i(x) \in \nulls (\Hess p(y))$ for any $x \in CR_p\backslash ri(CR_p)$, observe that $\nulls(\Hess p(y))$ is closed, the vector-valued functions $r_i$ are continuous in $x$, and the preimage of a closed set under a continuous function is closed.
\end{proof}

\subsection{A Simplified Semidefinite Representation of Second-Order Points and an Algorithm for Finding Local Minima}\label{SSec: Simplification}

In this subsection, we derive a semidefinite representation of the set $SO_p$, which will be given in (\ref{Eq: Complete Cubic SDP SOP}). In contrast to the semidefinite representation in (\ref{Eq: SOP SDR}), which requires first solving (\ref{Eq: cubic sdp}) and then performing some matrix factorizations, the representation in (\ref{Eq: Complete Cubic SDP SOP}) can be immediately obtained from the coefficients of $p$. To find a second-order point of an $n$-variate cubic polynomial via the representation in (\ref{Eq: Complete Cubic SDP SOP}), one needs to solve an SDP with $\frac{(n+2)(n+1)}{2}$ scalar variables and two semidefinite constraints of size $(n+1) \times (n+1)$. This is in contrast to finding a second-order point via the representation in (\ref{Eq: SOP SDR}), which requires solving two SDPs: (\ref{Eq: cubic sdp}) which has $\left(\frac{n(n+1)}{2}+1\right)\left(\frac{(n+2)(n+1)}{2}\right)+1$ scalar variables and two semidefinite constraints of sizes $(n+1) \times (n+1)$ and $n(n+1) \times n(n+1)$ (coming from the two sos constraints), and then the SDP associated with (\ref{Eq: SOP SDR}), which has $n$ scalar variables and a semidefinite constraint of size $n \times n$. Another purpose of this subsection is to present our final result, which is an algorithm for testing for existence of a local minimum (Algorithm \ref{Alg: Complete Cubic SDP} in Section \ref{SSSec: SOP and Algorithm}).

\subsubsection{A Simplified Sos Relaxation}

Recall from the proof of Theorem \ref{Thm: cubic sdp} that if $p$ has a second-order point $\xbar$, then there is an optimal solution to (\ref{Eq: cubic sdp}) of the form
\beq\label{Eq: Ideal solution}
(\gamma, \sigma, S) = \left(p(\xbar), \frac{1}{3}(x-\xbar)^T\Hxb(x-\xbar), \frac{1}{6}(x-\xbar)(x-\xbar)^T\right).\eeq In particular, for this solution, the coefficients of $\sigma$ and $S$ can both be written entirely in terms of the entries of $\xbar$ and the coefficients of $p$. In what follows, we attempt to optimize over solutions to (\ref{Eq: cubic sdp}) which are of the form in (\ref{Eq: Ideal solution}). However, imposing this particular structure on the solution requires nonlinear equality constraints (in fact, it turns out quadratic constraints suffice). Instead, we will impose an SDP relaxation of these nonlinear constraints and show that the relaxation is exact. We follow a standard technique in deriving SDP relaxations for quadratic programs, where the outer product $xx^T$ of some variable $x$ is replaced by a new matrix variable $X$ satisfying $X - xx^T \succeq 0$. The latter matrix inequality that can be imposed as a semidefinite constraint via the Schur complement~\cite{boyd2004convex}. The variable $\xbar$ will be represented by a variable $y \in \Rn$, and the symmetric matrix variable $Y \in \Snn$ will represent $yy^T$. In addition, we will need another scalar variable $z$.

Assume $p$ is given in the form (\ref{Eq: Cubic Poly Form}), and let us expand $\sigma$ in (\ref{Eq: Ideal solution}) (disregarding the factor $\frac{1}{3}$) as follows:

\begin{align*}
(x-\xbar)^T \Hxb (x - \xbar) &= x^T \left(\sum_{i=1}^n \xbar_iH_i + Q\right)x - 2\xbar^T\left(\sum_{i=1}^n \xbar_iH_i + Q\right)x + \xbar^T\left(\sum_{i=1}^n \xbar_iH_i + Q\right)\xbar\\&= x^T\left(\sum_{i=1}^n \xbar_iH_i + Q\right)x - 2\sum_{i=1}^n \Tr(H_i \xbar \xbar^T)x_i - 2\xbar^TQx + \xbar^T\left(\sum_{i=1}^n \xbar_iH_i + Q\right)\xbar,
\end{align*}
where in the last equality we used Lemma \ref{Lem: Hessian switch}. If we replace any occurrence of $\xbar$ with $y$, any occurrence of $\xbar \xbar^T$ with $Y$ and any occurrence of $\xbar^T(\sum_{i=1}^n \xbar_iH_i + Q)\xbar$ with $z$, we can rewrite the above expression as

\beq \label{Eq: new cubic sdp first term}
\sigma_{Y,y,z}(x) \defeq \sum_{j=1}^n \sum_{k=1}^n \left(\sum_{i=1}^n (H_i)_{jk}y_i + Q_{jk}\right)x_jx_k - 2\sum_{i=1}^n (\Tr(H_iY) + e_i^TQy)x_i + z. \eeq
Similarly, the matrix $S$ in (\ref{Eq: Ideal solution}) can be written as $xx^T - xy^T - yx^T + Y$ (disregarding the factor $\frac{1}{6}$). Note that if $Y - yy^T\succeq 0$, then the matrix $xx^T - xy^T - yx^T + Y \edit{= (x-y)(x-y)^T + (Y-yy^T)}$ is an sos-matrix (as a polynomial matrix in $x$). By making these replacements, we arrive at an SDP which attempts to look for a solution to the sos program in (\ref{Eq: cubic sdp}) which is of the structure in (\ref{Eq: Ideal solution}). This is the following SDP\footnote{Note that $x$ is not a decision variable in this SDP as the first constraint needs to hold for all $x$.}:

\begin{equation}\label{Eq: new cubic sdp}
    \begin{aligned}
	& \underset{\gamma \in \R, Y \in \Snn, y \in \Rn, z \in \R}{\sup}
	& & \gamma \\
	& \text{subject to}
	&& p(x) - \gamma = \frac{1}{3}\sigma_{Y,y,z}(x) + \frac{1}{6}\Tr\left(\Hx(xx^T - xy^T - yx^T + Y)\right),\\
	&&&\sigma_{Y,y,z} \text{ is sos},\\
	&&&\bmat Y & y\\ y^T & 1 \emat \succeq 0.
\eaeql

Through straightforward algebra and matching coefficients, the first constraint (keeping in mind that $p$ is as in (\ref{Eq: Cubic Poly Form})) can be more explicitly written as:
\begin{align*}
b_i &= -e_i^TQy - \frac{1}{2} \Tr(H_iY), i = 1, \ldots, n,\\
- \gamma &= \frac{1}{6}\Tr(QY) + \frac{z}{3}.
\end{align*}

These constraints reflect that the coefficients of the linear terms and the scalar coefficient match on both sides; the cubic and quadratic coefficients are automatically the same. We can rewrite (\ref{Eq: new cubic sdp first term}) as
$$\sigma_{Y,y,z}(x) = \bvec x\\ 1\evec^T T(Y,y,z) \bvec x \\ 1 \evec,$$
where
$$T(Y,y,z) \defeq \bmat \sum_{i=1}^n y_iH_i + Q & \sum_{i=1}^n \Tr(H_iY)e_i+Qy \\ (\sum_{i=1}^n \Tr(H_iY)e_i+Qy)^T & z\emat.$$
The constraint in (\ref{Eq: new cubic sdp}) that $\sigma$ be sos is the same as the matrix $T$ being psd. Putting everything together, the problem in (\ref{Eq: new cubic sdp}) can be rewritten as the following SDP:\footnote{Recall that the data to this SDP is obtained from the representation of $p$ in the form of (\ref{Eq: Cubic Poly Form}).}

\begin{equation}\label{Eq: Small cubic SDP}
    \begin{aligned}
	& \underset{Y \in \Snn, y \in \Rn, z \in \R}{\inf}
	& & \frac{1}{6}\Tr(QY) + \frac{z}{3} \\
	& \text{subject to}
	&& \frac{1}{2}\Tr(H_iY) + e_i^TQy + b_i = 0, \forall i = 1, \ldots, n,\\
	&&& T(Y,y,z) \succeq 0,\\
	&&& \bmat Y & y\\ y^T & 1 \emat \succeq 0.
\eaeql

It is interesting to observe that the first constraint is a relaxation of the quadratic constraint which would impose $\grad p(y) = 0$, and that the constraint $T(Y,y,z) \succeq 0$ in particular implies $\Hess p(y) \succeq 0$. One can think of (\ref{Eq: Small cubic SDP}) as another SDP relaxation of (\ref{Eq: convex pop}) which is tight when $p$ has a second-order point.

\subsubsection{Combining the SDP in (\ref{Eq: Small cubic SDP}) with its Dual}
In this subsection, we write down an SDP (given in (\ref{Eq: complete cubic SDP})) whose optimal value can be related to the existence of second-order points of a cubic polynomial. To arrive at this SDP, we first take the dual of (\ref{Eq: Small cubic SDP}). It will turn out that the constraints in the dual follow a very similar structure to those in the primal, and that any feasible solution of the primal yields a feasible solution of the dual. We then combine the primal-dual pair of SDPs to arrive at a single SDP, which is the one in (\ref{Eq: complete cubic SDP}). To this end, let us write down the dual of (\ref{Eq: Small cubic SDP}):

\baeq
&\underset{R,S,r,s,\lambda,\sigma,\rho,\gamma}{\sup}
& & \gamma &&\\
& \text{subject to}
&& \frac{1}{6}\Tr(QY) + \frac{z}{3} - \gamma &=& \sum_{i=1}^n \lambda_i\left(\frac{1}{2}\Tr(H_iY)+e_i^TQy+b_i\right)\\
&&&&& + \Tr\left(\bmat Y & y \\ y^T & 1\emat \bmat R & r \\ r^T & \rho\emat \right)+ \Tr\left(T(Y,y,z) \bmat S & s \\ s^T & \sigma \emat\right), \forall (Y,y,z)\\
&&& \hspace{1.6cm} \bmat R & r \\ r^T & \rho\emat &\succeq& 0,\\
&&& \hspace{1.6cm} \bmat S & s \\ s^T & \sigma \emat &\succeq& 0,
\eaeq
where $R,S \in \Snn, r,s,\lambda \in \Rn,$ and $\sigma,\rho,\gamma \in \R$. The right-hand side of the first constraint simplifies to
$$b^T\lambda + \rho + \Tr(QS) +\Tr\left(\left(\sum_{i=1}^n (\frac{1}{2}\lambda_i+2s_i)H_i + R \right)Y\right) + \left(Q(\lambda+2s) + \sum_{i=1}^n \Tr(H_iS)e_i + 2r\right)^Ty + \sigma z.$$
After matching coefficients, the dual problem can be rewritten as
\baeq
    & \underset{R,S,r,s,\lambda,\rho}{\sup}
	&& -b^T\lambda - \rho - \Tr(QS)\\
	& \text{subject to}
	&& \sum_{i=1}^n (\frac{1}{2}\lambda_i+2s_i)H_i + R = \frac{1}{6}Q,\\
	&&& Q(\lambda+2s) + \sum_{i=1}^n \Tr(H_iS)e_i + 2r= 0,\\
	&&& \bmat R & r \\ r^T & \rho\emat \succeq 0,\\
	&&& \bmat S & s \\ s^T & \frac{1}{3} \emat \succeq 0,
\eaeq

Substituting $R$ and $r$ using the first two constraints into the first psd constraint and then multiplying by 6, we arrive at the problem

\baeq
    & \underset{S, s, \lambda, \rho}{\sup}
	&& -b^T\lambda - \rho - \Tr(QS)&\\
	& \text{subject to}
	&& \bmat \sum_{i=1}^n (-3\lambda_i - 12s_i)H_i + Q & Q(-3\lambda-6s) - 3\sum_{i=1}^n \Tr(H_iS)e_i \\ \left(Q(-3\lambda-6s) - 3\sum_{i=1}^n \Tr(H_iS)e_i \right)^T & 6\rho \emat &\succeq 0,\\
	&&& \hspace{11cm} \bmat S & s \\ s^T & \frac{1}{3}\emat &\succeq 0.
\eaeq

    Replacing $S$ with $\frac{1}{3}S$, $s$ with $-\frac{1}{3}s$, and $\rho$ with $\frac{1}{6}\rho$, we can reparameterize this problem and arrive at our final form for the dual of (\ref{Eq: Small cubic SDP}):
    
    \begin{equation}\label{Eq: small cubic SDP dual}
    \begin{aligned}
        & \underset{S, s, \lambda, \rho}{\sup}
    	&& -b^T\lambda - \frac{1}{6}\rho - \frac{1}{3}\Tr(QS)&\\
    	& \text{subject to}
    	&& \bmat \sum_{i=1}^n (4s_i - 3\lambda_i)H_i + Q & Q(2s - 3\lambda) - \sum_{i=1}^n \Tr(H_iS)e_i \\ \left(Q(2s-3\lambda) - \sum_{i=1}^n \Tr(H_iS)e_i\right)^T & \rho \emat &\succeq 0,\\
    	&&& \hspace{10cm} \bmat S & s \\ s^T & 1\emat &\succeq 0.
    \eaeql
    
    One can easily verify that if $(Y,y,z)$ is feasible to (\ref{Eq: Small cubic SDP}), then $(Y,y,y,z)$ is feasible to (\ref{Eq: small cubic SDP dual}). Replacing $(S,s,\lambda,\gamma)$ with $(Y,y,y,z)$ in (\ref{Eq: small cubic SDP dual}) gives an SDP whose constraints are the two psd constraints in (\ref{Eq: Small cubic SDP}) and whose objective function is $-b^Ty - \frac{1}{6}z - \frac{1}{3}\Tr(QY)$. We now create a new SDP, which has the same decision variables and constraints as (\ref{Eq: Small cubic SDP}), but whose objective function is the difference between the objective function of (\ref{Eq: Small cubic SDP}) and $-b^Ty - \frac{1}{6}z - \frac{1}{3}\Tr(QY)$. The optimal value of this new SDP is an upper bound on the duality gap of the primal-dual SDP pair (\ref{Eq: Small cubic SDP}) and (\ref{Eq: small cubic SDP dual}). If our cubic polynomial $p$ is written in the form (\ref{Eq: Cubic Poly Form}) and $$T(Y,y,z) = \bmat \sum_{i=1}^n y_iH_i + Q & \sum_{i=1}^n \Tr(H_iY)e_i+Qy \\ (\sum_{i=1}^n \Tr(H_iY)e_i+Qy)^T & z\emat$$
    as before, the new SDP we just described can be written as

    \begin{equation}\label{Eq: complete cubic SDP}
        \begin{aligned}
    	& \underset{Y \in \Snn, y \in \Rn, z \in \R}{\inf}
    	& & \frac{1}{2}\Tr(QY) + b^Ty + \frac{z}{2}\\
    	& \text{subject to}
    	&& \frac{1}{2}\Tr(H_iY)+e_i^TQy+b_i=0, \forall i = 1, \ldots, n,\\
    	&&& T(Y,y,z) \succeq 0,\\
    	&&& \bmat Y & y \\ y^T & 1 \emat \succeq 0.
    \eaeql
The following theorem relates the optimal value of this SDP to the existence of second-order points of $p$.

\begin{theorem}\label{Thm: Complete Cubic SDP}
    For a cubic polynomial $p$ given in the form (\ref{Eq: Cubic Poly Form}), consider the SDP in (\ref{Eq: complete cubic SDP}). For any feasible solution $(Y,y,z)$ to (\ref{Eq: complete cubic SDP}), the objective value of (\ref{Eq: complete cubic SDP}) is nonnegative. Furthermore, the optimal value of (\ref{Eq: complete cubic SDP}) is zero and is attained if and only if $p$ has a second-order point.
\end{theorem}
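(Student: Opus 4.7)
The plan is to establish Claim 1 via weak duality on the primal--dual pair (\ref{Eq: Small cubic SDP}) and (\ref{Eq: small cubic SDP dual}), and to establish the two directions of Claim 2 via an explicit rank-one construction (the ``$\Leftarrow$'' direction) and a Schur-complement-based argument (the ``$\Rightarrow$'' direction).

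For Claim 1, note that (\ref{Eq: complete cubic SDP}) and (\ref{Eq: Small cubic SDP}) share the same feasible set. The text has already pointed out that primal feasibility of $(Y,y,z)$ for (\ref{Eq: Small cubic SDP}) implies dual feasibility of $(S,s,\lambda,\rho) = (Y,y,y,z)$ for (\ref{Eq: small cubic SDP dual}), and by construction the objective of (\ref{Eq: complete cubic SDP}) at $(Y,y,z)$ is exactly the primal objective of (\ref{Eq: Small cubic SDP}) at $(Y,y,z)$ minus the dual objective of (\ref{Eq: small cubic SDP dual}) at $(Y,y,y,z)$; weak duality then yields nonnegativity. For the ``$\Leftarrow$'' direction of Claim 2, if $\xbar$ is a second-order point of $p$, I propose the rank-one candidate $(Y,y,z) = (\xbar \xbar^T,\, \xbar,\, \xbar^T \Hxb \xbar)$. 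The bordered psd constraint holds since the matrix equals $(\xbar,1)(\xbar,1)^T$; the linear constraints coincide with FONC at $\xbar$ after substituting $\Tr(H_i \xbar \xbar^T) = \xbar^T H_i \xbar$; and a short calculation using the Hessian symmetry (\ref{Eq: Valid Hessian}) (cf.\ Lemma \ref{Lem: Hessian switch}) shows that the off-diagonal block of $T(Y,y,z)$ equals $\Hxb \xbar$, so that $T = \bmat I \\ \xbar^T \emat \Hxb \bmat I & \xbar \emat \succeq 0$. Contracting FONC against $\xbar$ yields $\tfrac{1}{2} \sum_i \xbar_i \xbar^T H_i \xbar + \xbar^T Q \xbar + b^T \xbar = 0$, from which the objective at our candidate simplifies to zero; combined with Claim 1, the optimum is zero and attained.

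For the ``$\Rightarrow$'' direction, suppose $(Y^*, y^*, z^*)$ is feasible with objective zero, and define $W \defeq Y^* - y^* y^{*T}$ (psd by Schur-complementing the bordered constraint) and $A \defeq \Hess p(y^*) = \sum_i y^*_i H_i + Q$ (psd as the top-left block of $T(Y^*, y^*, z^*)$). Thus SONC holds at $y^*$, so it remains to establish FONC. Using $Y^* = W + y^* y^{*T}$, the linear constraints rewrite as $(\grad p(y^*))_i = -\tfrac{1}{2} \Tr(H_i W)$, and (\ref{Eq: Valid Hessian}) gives $\sum_i \Tr(H_i Y^*) e_i + Q y^* = A y^* + v$ where $v_i \defeq \Tr(H_i W)$, so $v = -2 \grad p(y^*)$. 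Applying the congruence $C^T T C$ with $C = \bmat I & -y^* \\ 0 & 1 \emat$ then transforms the constraint $T \succeq 0$ into $\bmat A & v \\ v^T & \tilde z \emat \succeq 0$, where $\tilde z \defeq z^* - y^{*T} A y^* - 2 v^T y^*$. The crux of the argument is the identity $\Tr(AW) + \tilde z = 0$, which I derive by combining the hypothesis $\tfrac{1}{2} \Tr(Q Y^*) + b^T y^* + \tfrac{z^*}{2} = 0$ with the $y^*$-weighted sum $\sum_i y^*_i \bigl[\tfrac{1}{2} \Tr(H_i Y^*) + e_i^T Q y^* + b_i\bigr] = 0$ of the linear constraints, and simplifying via $Y^* = W + y^* y^{*T}$ together with $y^{*T} A y^* = \sum_i y^*_i\, y^{*T} H_i y^* + y^{*T} Q y^*$. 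Since $\Tr(AW) \geq 0$ (as $A, W \succeq 0$) and $\tilde z \geq 0$ (as a diagonal entry of a psd matrix), both must vanish; the zero diagonal entry of $\bmat A & v \\ v^T & 0 \emat \succeq 0$ then forces $v = 0$, yielding $\grad p(y^*) = 0$ and hence FONC at $y^*$. The main obstacle is precisely the identity $\Tr(AW) + \tilde z = 0$: its derivation demands careful bookkeeping through the decomposition $Y^* = W + y^* y^{*T}$ and repeated use of (\ref{Eq: Valid Hessian}) to rewrite contracted expressions such as $\sum_i y^*_i\, y^{*T} H_i y^*$, whereas everything downstream is standard linear algebra on psd block matrices.
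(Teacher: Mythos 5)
Your proof is correct, and while Claim~1 (weak duality) and the $\Leftarrow$ direction of Claim~2 (the rank-one candidate $(\xbar\xbar^T,\xbar,\xbar^T\Hxb\xbar)$, verified via the factorization $T = \bmat I \\ \xbar^T\emat\Hxb\bmat I & \xbar\emat$) mirror the paper's argument, your $\Rightarrow$ direction takes a cleaner route. The paper invokes the generalized Schur complement with the pseudo-inverse $A^+$ of $A = \Hess p(y^*)$ to bound $z^*$ from below, must separately verify the range condition $\sum_i\Tr(H_iY^*)e_i + Qy^* \in \cols(A)$ needed for that characterization, and then decomposes the zero objective as $\tfrac{1}{2}\Tr(AD) + \tfrac{1}{2}d^TA^+d + \tfrac{\delta}{2}$, concluding $A^+d = 0$, hence $Ad = 0$, hence $d = 0$ (using $d \in \cols(A)$). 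Your congruence by $C = \bmat I & -y^* \\ 0 & 1\emat$ sidesteps the pseudo-inverse and the range condition entirely: it converts $T \succeq 0$ into $\bmat A & v \\ v^T & \tilde z\emat \succeq 0$, and your identity $\Tr(AW) + \tilde z = 0$ is exactly the paper's decomposition~(\ref{Eq: Complete SDP nonnegative}) repackaged, since $\tilde z = \delta + d^TA^+d$ in the paper's notation. Forcing $\tilde z = 0$ and then reading off $v = 0$ from a psd matrix with a vanishing diagonal entry is a more elementary finish than the pseudo-inverse manipulation, at the (mild) cost of not exhibiting $d^TA^+d$ and $\delta$ as separate nonnegative slacks. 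Both arguments stand; yours is somewhat easier to verify line by line.
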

\begin{proof}
Suppose $(Y,y,z)$ is a feasible solution to (\ref{Eq: complete cubic SDP}). Note that $(Y,y,z)$ is feasible to (\ref{Eq: Small cubic SDP}) and $(Y,y,y,z)$ is feasible to (\ref{Eq: small cubic SDP dual}), and so
$$\frac{1}{2}\Tr(QY) + b^Ty + \frac{z}{2} = \frac{1}{6}\Tr(QY) + \frac{z}{3} - \left(-b^Ty - \frac{1}{6}z - \frac{1}{3}\Tr(QY)\right) \ge 0$$
by weak duality applied to (\ref{Eq: Small cubic SDP}) and (\ref{Eq: small cubic SDP dual}). Hence, the objective of (28) is nonnegative at any feasible solution.

Now suppose that $p$ has a second-order point $\xbar$. We claim that the triplet $$\left(\xbar\xbar^T, \xbar, \xbar^T\left(\sum_{i=1}^n \xbar_iH_i + Q\right)\xbar\right)$$ is feasible to (\ref{Eq: complete cubic SDP}) and achieves an objective value of zero. Indeed, the first constraint of (\ref{Eq: complete cubic SDP}) is satisfied because its left-hand side reduces to $\gxb$, which is zero. The third constraint is satisfied since the matrix $(\xbar, 1)(\xbar, 1)^T$ is clearly psd. The second constraint is satisfied since $T(\xbar\xbar^T, \xbar, \xbar^T(\sum_{i=1}^n \xbar_iH_i + Q)\xbar)$ can be written as
$$\bmat \sum_{i=1}^n \xbar_iH_i + Q & (\sum_{i=1}^n \xbar_iH_i+Q)\xbar \\ \xbar^T(\sum_{i=1}^n \xbar_iH_i+Q) & \xbar^T(\sum_{i=1}^n \xbar_iH_i + Q)\xbar\emat = \bmat (\sum_{i=1}^n \xbar_iH_i + Q)^{\frac{1}{2}} \\ \xbar^T(\sum_{i=1}^n \xbar_iH_i + Q)^{\frac{1}{2}}\emat \bmat (\sum_{i=1}^n \xbar_iH_i + Q)^{\frac{1}{2}} \\ \xbar^T(\sum_{i=1}^n \xbar_iH_i + Q)^{\frac{1}{2}}\emat^T.$$ The objective value at $(\xbar\xbar^T, \xbar, \xbar^T(\sum_{i=1}^n \xbar_iH_i + Q)\xbar)$ is
\begin{align*}
    &\frac{1}{2}\Tr(Q\xbar\xbar^T) + b^T\xbar + \frac{1}{2}\xbar^T\left(\sum_{i=1}^n \xbar_iH_i + Q\right)\xbar\\
    =& \frac{1}{2}\xbar^TQ\xbar - \left(\frac{1}{2} \sum_{i=1}^n \xbar_iH_i\xbar +Q\xbar\right)^T \xbar + \frac{1}{2}\xbar^T\left(\sum_{i=1}^n \xbar_iH_i + Q\right)\xbar\\
    =& 0.
\end{align*}
Since we have already shown that the objective function of (\ref{Eq: complete cubic SDP}) is nonnegative over its feasible set, it follows that when $p$ has a second-order point, the optimal value of (\ref{Eq: complete cubic SDP}) is zero and is attained.

To prove the converse, suppose the optimal value of (\ref{Eq: complete cubic SDP}) is zero and is attained. Let $(Y^*, y^*, z^*)$ be an optimal solution to (\ref{Eq: complete cubic SDP}). We will show that $y^*$ is a second-order point for $p$. Clearly $\Hess p(y^*)$ is psd, since $T(Y^*, y^*, z^*) \succeq 0$. To show that $\grad p(y^*) = 0$, let us start by letting $D \defeq Y^* - y^*y^{*T}$, and $d \defeq \sum_{i=1}^n \Tr(H_iD)e_i$. Note that
$$\frac{1}{2}\Tr(H_iy^*y^{*T}) + \frac{1}{2}\Tr(H_iD) + e_i^TQy^* + b_i = 0 \overset{(\ref{Eq: Cubic Poly Form})}{\Rightarrow} -2(\grad p(y^*))_i = \Tr(H_iD),$$
or equivalently $d = -2\grad p(y^*)$. In the remainder of the proof, we show that $d = 0$.

Since $\sum_{i=1}^n y_i^*H_iy^*$ is the vector whose $i$-th entry is $y^{*T}H_iy^*$, we have that 
\begin{equation}\label{Eq: Top right vector decomposition}
\sum_{i=1}^n \Tr(H_iY^*)e_i+Qy^*= \left(\sum_{i=1}^n y_i^*H_i + Q\right) y^* + d.
\end{equation} Then from the generalized\footnote{Here, $A^{+}$ refers to any pseudo-inverse of $A$, i.e. a matrix satisfying $AA^{+}A = A$.} Schur complement condition applied to $T(Y^*,y^*,z^*)$, we have
\begin{align*}
    z^* &\ge \left(\left(\sum_{i=1}^n y_i^*H_i + Q\right)y^* + d\right)^T \left(\sum_{i=1}^n y_i^*H_i + Q\right)^{+} \left(\left(\sum_{i=1}^n y_i^*H_i + Q\right)y^* + d\right)\\
    &= y^{*T}\left(\sum_{i=1}^n y_i^*H_i + Q\right)y^* + 2d^T\left(\sum_{i=1}^n y_i^*H_i + Q\right)^{+}\left(\sum_{i=1}^n y_i^*H_i + Q\right)y^* + d^T\left(\sum_{i=1}^n y_i^*H_i + Q\right)^{+}d.
\end{align*}
It is not difficult to verify that since $T(Y^*,y^*,z^*) \succeq 0$, we have $$\sum_{i=1}^n \Tr(H_iY^*)e_i+Qy^* \in \cols(\sum_{i=1}^n y_i^*H_i + Q),$$ and thus (\ref{Eq: Top right vector decomposition}) implies $d \in \cols(\sum_{i=1}^n y_i^*H_i + Q)$. Therefore, there exists a vector $v \in \Rn$ such that $d = (\sum_{i=1}^n y_i^*H_i + Q) v$. We then have
\begin{align*}
    d^T\left(\sum_{i=1}^n y_i^*H_i + Q\right)^{+}\left(\sum_{i=1}^n y_i^*H_i + Q\right) y^* &= v^T \left(\sum_{i=1}^n y^*_iH_i + Q\right) \left(\sum_{i=1}^n y^*_iH_i + Q\right)^{+} \left(\sum_{i=1}^n y^*_iH_i + Q\right) y^*\\
    &= v^T \left(\sum_{i=1}^n y_i^*H_i + Q\right) y^*\\
    &= d^T y^*.
\end{align*}
Now let $$\delta \defeq z^* - y^{*T}\left(\sum_{i=1}^n y_i^*H_i + Q\right)y^* - 2d^Ty^* - d^T\left(\sum_{i=1}^n y_i^*H_i + Q\right)^{+}d$$ and observe that $\delta \ge 0$. We can then write the objective value of (\ref{Eq: complete cubic SDP}) at $(Y^*,y^*,z^*)$ in terms of $D, d$, and $\delta$:

\begin{equation}\label{Eq: Complete SDP nonnegative}
\begin{aligned}
    \frac{1}{2}\Tr(QY^*) + b^Ty^* + \frac{1}{2}z^*
	&= \frac{1}{2}\left(y^{*T}Qy^* + \Tr(QD)\right) + \sum_{i=1}^n \left(-e_i^TQ y^*- \frac{1}{2}\Tr(H_iy^*y^{*T}) - \frac{1}{2}\Tr(H_iD)\right)y_i^*\\
	&+ \frac{1}{2}\left(y^{*T}\left(\sum_{i=1}^n y_i^*H_i + Q\right)y^* + 2d^Ty^* + d^T\left(\sum_{i=1}^n y_i^*H_i + Q\right)^{+}d+ \delta\right)\\
	&= \left(\frac{1}{2}-1+\frac{1}{2}\right)y^{*T}Qy^* + \left(-\frac{1}{2} + \frac{1}{2}\right)\sum_{i=1}^n y^{*T}y_i^*H_iy^*\\
	&+ \frac{1}{2}\Tr(QD) + \left(-\frac{1}{2}+1\right)\sum_{i=1}^n\Tr(H_iD)y_i^* + \frac{1}{2}d^T\left(\sum_{i=1}^n y_i^*H_i + Q\right)^{+}d + \frac{\delta}{2}\\
	&= \frac{1}{2}\Tr\left(\left(\sum_{i=1}^n y_i^*H_i + Q\right)D\right) + \frac{1}{2}d^T\left(\sum_{i=1}^n y_i^*H_i + Q\right)^{+}d + \frac{\delta}{2}\\
	& \ge 0,
\end{aligned}
\end{equation}
where in the last inequality we used the facts that $D \succeq 0$ and that the pseudo-inverse of a psd matrix is psd.
    
Since the left-hand side of the above equation is zero by assumption, and since all three terms on the right-hand side are nonnegative, it follows that $(\sum_{i=1}^n y_i^*H_i + Q)^+ d = 0$. As the null space of $(\sum_{i=1}^n y_i^*H_i + Q)^+$ is the same as the null space of $(\sum_{i=1}^n y_i^*H_i + Q)$, we have $(\sum_{i=1}^n y_i^*H_i + Q)d = 0$. However, because $d \in \cols(\sum_{i=1}^n y_i^*H_i + Q)$, it must be that $d=0$.

\end{proof}

\subsubsection{An Algorithm for Finding Local Minima}\label{SSSec: SOP and Algorithm}

Theorem \ref{Thm: Complete Cubic SDP} leads to the following characterization of second-order points of a cubic polynomial.

\begin{cor}\label{Cor: Complete Cubic SDP SOP}
    Let $p: \Rn \to \R$ be a cubic polynomial written in the form (\ref{Eq: Cubic Poly Form}). Then the set of its second-order points is equal to
    \beq\label{Eq: Complete Cubic SDP SOP}
    \begin{aligned}\{y \in \Rn\ |\ & \exists Y \in \Snn,z \in \R \text{ such that }\\
    &\frac{1}{2}\Tr(QY) + b^Ty + \frac{z}{2} = 0, \frac{1}{2}\Tr(H_iY)+e_i^TQy+b_i = 0, \forall i = 1, \ldots, n,\\
    &T(Y,y,z) \succeq 0, \bmat Y&y\\y^T&1\emat \succeq 0\}.
    \end{aligned}\eeq
\end{cor}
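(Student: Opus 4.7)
The plan is to derive the corollary as a direct repackaging of Theorem~\ref{Thm: Complete Cubic SDP}. Observe that the semidefinite-representable set displayed in (\ref{Eq: Complete Cubic SDP SOP}) is precisely the set of $y$-coordinates of triples $(Y,y,z)$ that are feasible to (\ref{Eq: complete cubic SDP}) and that make its objective function $\frac{1}{2}\Tr(QY)+b^Ty+\frac{z}{2}$ vanish. By Theorem~\ref{Thm: Complete Cubic SDP}, this objective is nonnegative on the feasible set, so these triples are exactly the optimal solutions whose optimal value equals zero.

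For the inclusion $SO_p \subseteq$ (\ref{Eq: Complete Cubic SDP SOP}), I would take any second-order point $\xbar$ and exhibit the witness
\[
(Y,y,z) = \left(\xbar\xbar^T,\ \xbar,\ \xbar^T\left(\textstyle\sum_{i=1}^n \xbar_iH_i + Q\right)\xbar\right),
\]
which is exactly the triple constructed in the proof of Theorem~\ref{Thm: Complete Cubic SDP}. That proof already verifies the linear constraints (they reduce to $\gxb=0$), the Schur-complement constraint $\bmat Y & y \\ y^T & 1 \emat \succeq 0$ (because $Y-yy^T = 0$), and the psd constraint on $T(Y,y,z)$ (via the explicit factorization given there), and it also shows that this triple achieves objective value zero. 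So $y=\xbar$ lies in the set in (\ref{Eq: Complete Cubic SDP SOP}).

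For the reverse inclusion, I would take any $y$ in the set in (\ref{Eq: Complete Cubic SDP SOP}), together with a corresponding $(Y,z)$. Then $(Y,y,z)$ is feasible to (\ref{Eq: complete cubic SDP}) and its objective value equals zero. By the nonnegativity statement of Theorem~\ref{Thm: Complete Cubic SDP}, this triple is optimal. The converse direction of that same theorem (the argument starting from an optimal triple $(Y^*,y^*,z^*)$ and concluding that the $y$-component is a second-order point, by showing $\Hess p(y)\succeq 0$ from $T(Y,y,z)\succeq 0$ and then deducing $\grad p(y) = 0$ via the trace identities and generalized Schur complement calculation) applies verbatim to $(Y,y,z)$, so $y \in SO_p$.

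There is no substantive obstacle beyond recognizing that both directions of Corollary~\ref{Cor: Complete Cubic SDP SOP} are already contained (constructively) in the proof of Theorem~\ref{Thm: Complete Cubic SDP}; the forward inclusion uses the explicit rank-one lifting of a second-order point, and the backward inclusion invokes the chain of equalities culminating in (\ref{Eq: Complete SDP nonnegative}), which pins down the $y$-component of any zero-value feasible triple as a second-order point of $p$.
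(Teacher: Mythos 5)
Your proposal is correct and follows essentially the same route as the paper: both directions are obtained by reusing the two halves of the proof of Theorem~\ref{Thm: Complete Cubic SDP}, taking the rank-one lifting $(\xbar\xbar^T,\xbar,\xbar^T(\sum_i \xbar_iH_i+Q)\xbar)$ for $SO_p \subseteq (\ref{Eq: Complete Cubic SDP SOP})$, and the zero-objective-implies-second-order-point argument for the reverse inclusion. Nothing is missing.
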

\begin{proof}
Recall from the proof of Theorem \ref{Thm: Complete Cubic SDP} that if $\xbar$ is a second-order point of $p$, then the triplet $(\xbar\xbar^T, \xbar, \xbar^T(\sum_{i=1}^n \xbar_iH_i + Q)\xbar)$ is feasible solution to (\ref{Eq: complete cubic SDP}) with objective value zero. Hence any second-order point belongs to (\ref{Eq: Complete Cubic SDP SOP}). Conversely, recall that if $(Y,y,z)$ is a feasible solution to (\ref{Eq: complete cubic SDP}) with objective value zero, then $y$ is a second-order point of $p$. Therefore any point in (\ref{Eq: Complete Cubic SDP SOP}) is a second-order point of $p$.
\end{proof}

In view of Theorem \ref{Thm: Closure}, we observe that if $p$ has a local minimum, the set in (\ref{Eq: Complete Cubic SDP SOP}) is a semidefinite representation of $\overline{LM_p}$. This observation gives rise to the following algorithm which tests if a cubic polynomial has a local minimum.

\begin{algorithm}[H]
	\caption{Algorithm for finding a local minimum of a cubic polynomial using a polynomial number of calls to E-SDP.}\label{Alg: Complete Cubic SDP}
	\begin{algorithmic}[1]
		\State {\bf Input:} A cubic polynomial $p: \Rn \to \R$ in the form (\ref{Eq: Cubic Poly Form})
		\State \texttt{{\bf TEST1}} test using E-SDP if (\ref{Eq: Complete Cubic SDP SOP}) is empty
		\State \quad \texttt{{\bf if}} YES
			\State \quad \quad \Return NO LOCAL MINIMUM
		\State \quad \texttt{{\bf if}} NO
			\State \quad\quad Find (via Lemma \ref{Lem: Relint recovery}) a point $x^*$ in the relative interior of (\ref{Eq: Complete Cubic SDP SOP})

        \mbox{}
		\State \texttt{{\bf TEST2}} test (via Theorem \ref{Thm: Checking Min Poly Time}) if $x^*$ is a local minimum
		\State \quad \texttt{{\bf if}} YES
			\State \quad \quad \Return $x^*$
		\State \quad \texttt{{\bf if}} NO
			\State \quad\quad \Return NO LOCAL MINIMUM
	\end{algorithmic}
\end{algorithm}

{\bf Complexity and correctness of Algorithm \ref{Alg: Complete Cubic SDP}.} By design, if $p$ has no local minimum, Algorithm \ref{Alg: Complete Cubic SDP} will return \texttt{NO LOCAL MINIMUM} since \texttt{TEST2} answers \texttt{NO} for every point. If $p$ has a local minimum, then $SO_p$ is nonempty. Since $SO_p$ is given by (\ref{Eq: Complete Cubic SDP SOP}) due to Corollary~\ref{Cor: Complete Cubic SDP SOP}, \texttt{TEST1} answers \texttt{YES}. Then, by Theorem \ref{Thm: Local Minima SOP}, any point in the relative interior of (\ref{Eq: Complete Cubic SDP SOP}) is a local minimum. Hence $x^*$ will pass \texttt{TEST2}. Note that this algorithm makes $2n+1$ calls to E-SDP, and then runs Algorithm~\ref{Alg: Check Local Min}.\footnote{In fact, the number of calls to E-SDP can be reduced to $2n$ if the very first call to E-SDP uses $x_1$ as the objective function.}

\begin{remark}\label{Rem: Find SLM}
{\bf Finding strict local minima.}
If we are specifically interested in searching for a strict local minimum of a cubic polynomial, we can simply check if the point $x^*$ returned by Algorithm~\ref{Alg: Complete Cubic SDP} satisfies $\Hess p(x^*) \succ 0$. If the answer is yes, we return $x^*$; if the answer is no, we declare that $p$ has no strict local minimum. Clearly, if a local minimum $x^*$ satisfies $\Hess p(x^*) \succ 0$, it must be a strict local minimum due to the SOSC. Furthermore, recall from Section \ref{SSec: Cubic Preliminaries} that if $p$ has a strict local minimum, then it has a unique local minimum, and thus that must be the output of Algorithm \ref{Alg: Complete Cubic SDP}.
\end{remark}

\section{Conclusions and Future Directions}\label{Sec: Conclusions}

In this paper, we considered the notions of (i) critical points, (ii) second-order points, (iii) local minima, and (iv) strict local minima for multivariate polynomials. For each type of point, and as a function of the degree of the polynomial, we studied the complexity of deciding (1) if a given point is of that type, and (2) if a polynomial has a point of that type. See Tables~\ref{Table: Complexity Checking} and \ref{Table: Complexity Existence} in Section~\ref{Sec: Introduction} for a summary of how our results complement prior literature. The majority of our work was dedicated to the case of cubic polynomials, where some new tractable cases were revealed based in part on connections with semidefinite programming. In this final section, we outline two future research directions which also have to do with cubic polynomials.


\subsection{Approximate Local Minima}\label{SSec: Approximate Local Minima}

In Sections~\ref{Sec: Complexity} and \ref{Sec: Finding Local Min}, we established polynomial-time equivalence of finding local minima and second-order points of cubic polynomials and some SDP feasibility problems (see Corollary~\ref{Cor: Complete Cubic SDP SOP}, Algorithm~\ref{Alg: Complete Cubic SDP}, Theorem~\ref{Thm: SOP SDPF}, Theorem~\ref{Thm: LM SDPSF}). Unless some well-known open problems around the complexity of SDP feasibility are resolved (see Section~\ref{Sec: Complexity}), one cannot expect to make claims about finding local minima of cubic polynomials in polynomial time in the Turing model of computation. Nonetheless, it is known that under some assumptions, one can solve semidefinite programs to arbitrary accuracy in polynomial time (see, e.g. \cite{ramana1997exact,alizadeh1995interior,vandenberghe1996semidefinite,porkolab1997complexity,nesterov1994interior,grotschel2012geometric}). It is therefore reasonable to ask if one can find local minima of cubic polynomials to arbitrary accuracy in polynomial time. This is a question we would like to study more rigorously in future work. We present a partial result in this direction in Theorem \ref{Thm: Eps local min} below.

Recall from Section \ref{SSec: SDP Approach} that our ability to find local minima of a cubic polynomial $p$ depended on our ability to minimize $p$ over its convexity region $CR_p$. We show next that we can find an $\epsilon$-minimizer of $p$ over $CR_p$ by approximately solving a semideifnite program.

\begin{theorem}\label{Thm: Eps local min}
    For a cubic polynomial $p$ given in the form (\ref{Eq: Cubic Poly Form}), consider the SDP in (\ref{Eq: complete cubic SDP}). If the objective value at a feasible point $(Y,y,z)$ is $\epsilon \ge 0$, then $p(y) \le p(x) + \frac{2}{3}\epsilon, \forall x \in CR_p$.
\end{theorem}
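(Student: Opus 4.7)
The plan is to decouple the bound $p(y) \le p(x) + \tfrac{2}{3}\epsilon$ by introducing the scalar
\beq
\gamma \defeq -\tfrac{1}{6}\Tr(QY) - \tfrac{z}{3}
\eeq
and proving two separate inequalities: (a) $p(x) \ge \gamma$ for every $x \in CR_p$, and (b) $p(y) \le \gamma + \tfrac{2}{3}\epsilon$. Subtracting (a) from (b) then yields the theorem directly.

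For (a), I would revisit the derivation of (\ref{Eq: Small cubic SDP}) from (\ref{Eq: new cubic sdp}). Matching coefficients in that derivation shows that feasibility of $(Y,y,z)$ in (\ref{Eq: Small cubic SDP}) is equivalent to the polynomial identity
\beq
p(x) - \gamma = \tfrac{1}{3}\sigma_{Y,y,z}(x) + \tfrac{1}{6}\Tr\!\left(\Hx\,\bigl[(x-y)(x-y)^T + (Y-yy^T)\bigr]\right),
\eeq
with $\sigma_{Y,y,z}$ as in (\ref{Eq: new cubic sdp first term}). The scalar polynomial $\sigma_{Y,y,z}$ is sos because $T(Y,y,z) \succeq 0$, and the matrix in brackets is psd for every $x \in \Rn$ because $Y - yy^T \succeq 0$ (equivalent via Schur complement to the third constraint of (\ref{Eq: complete cubic SDP})). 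Hence, for any $x \in CR_p$ both summands on the right are nonnegative, so $p(x) \ge \gamma$.

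For (b), the core step is to prove the identity
\beq
p(y) - \gamma - \tfrac{2}{3}\epsilon \;=\; -\tfrac{1}{6}\Tr\!\left(\Hess p(y)\,(Y - yy^T)\right).
\eeq
The key manipulation is to multiply the $i$-th linear constraint $\tfrac{1}{2}\Tr(H_iY) + e_i^T Qy + b_i = 0$ by $y_i$ and sum over $i$, which yields $b^T y = \tfrac{1}{2}\Tr(QY) - y^T Q y - \tfrac{1}{2}\Tr(\Hess p(y)\,Y)$. Substituting this into $p(y) = \tfrac{1}{6} y^T \Hess p(y)\, y + \tfrac{1}{3} y^T Q y + b^T y$ (obtained from (\ref{Eq: Cubic Poly Form}) via $\sum_i y_i y^T H_i y = y^T \Hess p(y)\, y - y^T Q y$) and expanding $\gamma + \tfrac{2}{3}\epsilon$ in terms of $\Tr(QY)$, $b^T y$, and $z$, all terms cancel except $\tfrac{1}{6}\bigl[y^T \Hess p(y)\, y - \Tr(\Hess p(y)\,Y)\bigr]$, which is the claimed right-hand side. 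Since $\Hess p(y)$ is the upper-left block of $T(Y,y,z) \succeq 0$ it is psd, and $Y - yy^T \succeq 0$, so the trace inner product of these two psd matrices is nonnegative. Thus the right-hand side above is nonpositive, giving $p(y) \le \gamma + \tfrac{2}{3}\epsilon$.

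The main obstacle is the bookkeeping in step (b): the coefficients $\tfrac{1}{2}$, $\tfrac{1}{3}$, $\tfrac{1}{6}$, $\tfrac{2}{3}$ must conspire precisely so that all non-psd terms cancel. The underlying reason this works is that the objective of (\ref{Eq: complete cubic SDP}) is the duality gap between (\ref{Eq: Small cubic SDP}) and (\ref{Eq: small cubic SDP dual}); step (a) is the weak-duality bound supplied by the primal certificate $(Y,y,z)$, while step (b) is the corresponding bound supplied by the associated dual feasible point $(Y,y,y,z)$, with the factor $\tfrac{2}{3}$ accounting for the asymmetric scaling between the two objectives.
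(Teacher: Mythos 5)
Your proof is correct, and the two-inequality decomposition (pivoting on $\gamma = -\frac{1}{6}\Tr(QY) - \frac{z}{3}$) is the same skeleton as the paper's. The difference is in step (b): the paper re-uses the expansion from the proof of Theorem~\ref{Thm: Complete Cubic SDP}, writing $\frac{1}{6}\Tr(QY) + \frac{z}{3}$ in terms of $D = Y - yy^T$, $d = \sum_i \Tr(H_iD)e_i$, the pseudo-inverse $(\Hess p(y))^+$, and the residual $\delta$, and then drops a nonnegative term to obtain the factor $\frac{2}{3}$. Your identity
\begin{equation*}
p(y) - \gamma - \tfrac{2}{3}\epsilon \;=\; -\tfrac{1}{6}\Tr\bigl(\Hess p(y)\,(Y - yy^T)\bigr)
\end{equation*}
is a genuine shortcut: I checked that after substituting $\gamma + \frac{2}{3}\epsilon = \frac{1}{6}\Tr(QY) + \frac{2}{3}b^Ty$ (which follows from the definitions) and $b^Ty = -\frac{1}{2}\sum_i y_i\Tr(H_iY) - y^TQy$ (from the linear constraints), both sides reduce to $\frac{1}{6}\bigl(y^T\Hess p(y)y - \sum_i y_i\Tr(H_iY) - \Tr(QY)\bigr)$. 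This avoids the pseudo-inverse and the $d,\delta$ bookkeeping entirely, and exhibits the gap $\gamma + \frac{2}{3}\epsilon - p(y)$ as exactly the trace inner product of two psd matrices rather than as a slack-bearing bound on a sum of three nonnegative terms; your version is cleaner and in fact yields an exact expression for the gap, not merely a bound. Two minor points: in step (a) you implicitly need that feasibility of $(Y,y,z)$ in (\ref{Eq: complete cubic SDP}) (equivalently (\ref{Eq: Small cubic SDP})) gives, together with $\gamma$ as defined, a feasible solution to (\ref{Eq: new cubic sdp}) so that the polynomial identity holds; this is correct because the value of $\gamma$ is pinned down by coefficient matching, but it is worth stating. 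And the closing ``duality gap'' paragraph is good intuition for why the constant $\frac{2}{3}$ appears, but it is heuristic; the algebraic identity in step (b) is what actually carries the argument.
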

\begin{proof}
Consider a feasible solution $(Y,y,z)$ to (\ref{Eq: complete cubic SDP}). Let $\gamma^*$ be the infimum of $p$ over $CR_p$. Observe that
$$-\frac{1}{6}\Tr(QY) - \frac{z}{3} \le \gamma^*.$$
This is because the SDPs in (\ref{Eq: complete cubic SDP}) and (\ref{Eq: Small cubic SDP}) have the same constraints, and the optimal value of (\ref{Eq: Small cubic SDP}) is the negative of the optimal value of (\ref{Eq: new cubic sdp}), which by construction is a lower bound on $\gamma^*$. Similarly as in the proof of Theorem \ref{Thm: Complete Cubic SDP}, let $D \defeq Y - yy^T$, $d \defeq \sum_{i=1}^n \Tr(H_iD)e_i$, and
$$\delta \defeq z - y^T\left(\sum_{i=1}^n y_iH_i + Q\right)y - 2d^Ty - d^T\left(\sum_{i=1}^n y_iH_i + Q\right)^{+}d.$$ We can then write:
	\begin{align*}
	\frac{1}{6}\Tr(QY) + \frac{z}{3} &= \frac{1}{6}\Tr(QY) + \frac{z}{3} - \sum_{i=1}^n \left(\frac{1}{2} \Tr(H_iY) + e_i^TQy + b_i\right)y_i\\
	&= \frac{1}{6}\left(\Tr(Qyy^T) + \Tr(QD)\right)\\
	&+\frac{1}{3}\left(y^T\left(\sum_{i=1}^n y_iH_i + Q\right)y + 2d^Ty + d^T\left(\sum_{i=1}^n y_iH_i + Q\right)^{+}d + \delta \right)\\
	&- \frac{1}{2}\left(\Tr\left(\sum_{i=1}^n y_iH_iyy^T\right) + \Tr\left(\sum_{i=1}^n y_iH_iD\right)\right) - y^TQy - b^Ty\\
	&= -\frac{1}{6}\sum_{i=1}^n y^Ty_iH_iy - \frac{1}{2}y^TQy - b^Ty\\
	&+ \frac{1}{6}\Tr\left(\left(\sum_{i=1}^n y_iH_i + Q\right)D\right) + \frac{1}{3}\left(d^T\left(\sum_{i=1}^n y_iH_i + Q\right)^{+}d\right) + \frac{\delta}{3}\\
	&= -p(y) + \frac{1}{6}\Tr\left(\left(\sum_{i=1}^n y_iH_i + Q\right)D\right) + \frac{1}{3}\left(d^T\left(\sum_{i=1}^n y_iH_i + Q\right)^{+}d\right) + \frac{\delta}{3}\\
	&\le -p(y) + \frac{2}{3}\epsilon,
	\end{align*}
	where the first equality is due to the first constraint in (\ref{Eq: complete cubic SDP}), and the last inequality follows from the last equation of (\ref{Eq: Complete SDP nonnegative}) with $(Y^*,y^*,z^*)$ replaced by $(Y,y,z)$ and the fact that $\sum_{i=1}^n y_iH_i+Q$ and $D$ are both psd matrices. We therefore conclude that
	$$p(y) - \frac{2}{3}\epsilon \le -\frac{1}{6}\Tr(QY) - \frac{z}{3} \le \gamma^*.$$
	 We then have that $p(y) \le p(x) + \frac{2}{3}\epsilon, \forall x \in CR_p$ as desired.
	 
\end{proof}

\subsection{Unregularized Third-Order Newton Methods}\label{SSec: Cubic Newton}

We end our paper with an interesting application of the problem of finding a local minimum of a cubic polynomial. Recall that Newton's method for minimizing a twice-differentiable function proceeds by approximating the function with its second-order Taylor expansion at the current iterate, and then moving to a critical point\footnote{If the function to be minimized is convex, this critical point will be a global minimum of the quadratic approximation.} of this quadratic approximation. It is natural to ask whether one can lower the iteration complexity of Newton's method for three-times-differentiable functions by using third-order information. An immediate difficulty, however, is that the third-order Taylor expansion of a function around any point will not be bounded below (unless the coefficients of all its cubic terms are zero). In previous work (see, e.g. \cite{nesterov2019implementable}), authors have gotten around this issue by adding a regularization term to the third-order Taylor expansion. In future work, we aim to study an unregularized third-order Newton method which in each iteration moves to a local minimum of the third-order Taylor approximation by applying Algorithm \ref{Alg: Complete Cubic SDP}. We would like to explore the convergence properties of this algorithm and conditions under which the algorithm is well defined at every iteration.

As a first step, let us consider the univariate case. For a function $f: \R \to \R$, the iterations of (classical) Newton's method read
\begin{equation}\label{Eq: Second Order Iterates}
x_{k+1} = x_k - \frac{f'(x_k)}{f''(x_k)}.
\end{equation}
The update rule of a third-order Newton method, which in each iteration moves to the local minimum of the third-order Taylor approximation, is given by
\begin{equation}\label{Eq: Third Order Iterates}
x_{k+1} = x_k - \frac{f''(x_k) - \sqrt{f''(x_k)^2 - 2f'(x_k)f'''(x_k)}}{f'''(x_k)}.
\end{equation}
We have already observed that in some settings, these iterations can outperform the classical Newton iterations. For example, consider the univariate function
\begin{equation}\label{Eq: 363 Function}
f(x) = 20x\arctan(x) - 10 \log(1+x^2) + x^2,
\end{equation}
which is strongly convex and has a (unique) global minimum at $x = 0$, where $f(x) = 0$; see Figure~\ref{Fig: Newton Iterates}. The first three derivatives of this function are
$$f'(x) = 20\arctan(x) + 2x,$$
$$f''(x) = 2+\frac{20}{1+x^2},$$
$$f'''(x) = \frac{-40x}{(1+x^2)^2}.$$
One can show that the basin of attraction of the global minimum of $f$ under the classical Newton iterations in (\ref{Eq: Second Order Iterates}) is approximately $[-1.7121, 1.7121]$. Starting Newton's method with $|x_0| \ge 1.7122$ results in the iterates eventually oscillating between $\pm 13.4942$. In contrast, the iterates of our proposed third-order Newton method in (\ref{Eq: Third Order Iterates}) are globally convergent to the global minimum of $f$. The iterations of both methods starting at $x_0 = 1.5$ are compared in Table \ref{Tab: Newton Iterates} and Figure \ref{Fig: Newton Iterates}, showing faster convergence to the global minimum for the third-order approach.

\begin{table}[H]
	{\begin{tabular}{| c | c | c |}\hline
         $k$ & $x_k$ & $f(x_k)$\\\hline
         0 &  1.5 & 19.9473\\\hline
         1 & -.2327 & .5910\\\hline
         2 & -.0030 & 1.0014e-4\\\hline
         3 & -8.3227e-9 & 1.4546e-15\\\hline
         4 & 2.3490e-9 & 1.1587e-16\\\hline
    \end{tabular}
	\hspace{1cm}
	\begin{tabular}{| c | c | c |}\hline
         $k$ & $x_k$ & $f(x_k)$\\\hline
         0 &  1.5 & 19.9473\\\hline
         1 & -1.2786 & 15.1411\\\hline
         2 & .8795 & 7.7329\\\hline
         3 & -.3396 & 1.2477\\\hline
         4 & .0230 & .0058\\\hline
    \end{tabular}}
	\centering
	\caption{Iterations of the third-order Newton method (left) and the classical Newton method (right) on the function $f$ in (\ref{Eq: 363 Function}) starting at $x_0 = 1.5$.}
	\label{Tab: Newton Iterates}
\end{table}

\begin{figure}[H]
	\centering
	\includegraphics[height=.4\textheight,keepaspectratio]{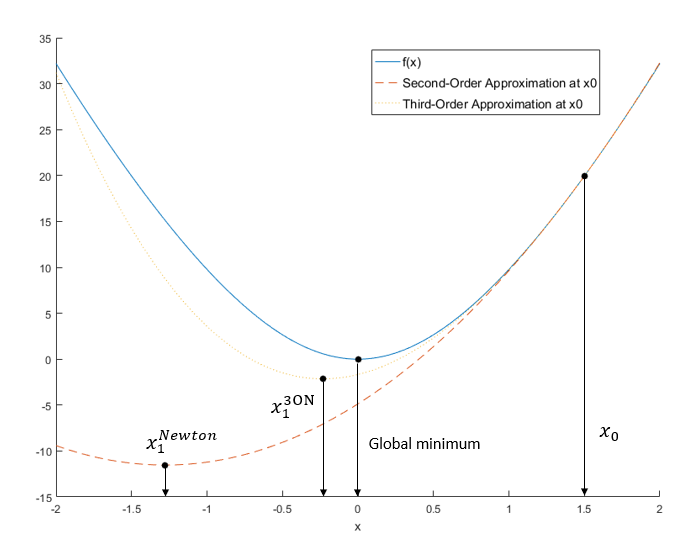}
	\caption{The plots of the function $f$ in (\ref{Eq: 363 Function}) and its second and third-order Taylor expansions around $x_0 = 1.5$. One can see that one iteration of the third-order Newton method in (\ref{Eq: Third Order Iterates}) brings us closer the global minimum of $f$ compared to one iteration of the Newton method in (\ref{Eq: Second Order Iterates}).} 
	\label{Fig: Newton Iterates}
\end{figure}

In addition to potential benefits regarding convergence, we have also observed that the behavior of the algorithm can be less sensitive to the initial condition when compared to Newton's method. As an example, we used Newton's method to find the critical points $\{1,-1,i,-i\}$ of $f(x) = x^5 - 5x$ on the complex plane, using the iterates (\ref{Eq: Second Order Iterates}), (\ref{Eq: Third Order Iterates}), and iterates given by
\begin{equation}\label{Eq: Third Order Iterates Max}
x_{k+1} = x_k - \frac{f''(x_k) + \sqrt{f''(x_k)^2 - 2f'(x_k)f'''(x_k)}}{f'''(x_k)},
\end{equation}
which can be interpreted as the iterates for moving to the local maximum of a third-order approximation of $f$. For each of the three iterations, the plots below demonstrate which initial conditions converge to the same critical point. As can be seen, sensitivity of Newton's method to the initial condition demonstrates fractal behavior, while the third-order iterates do not.

\begin{figure}[H]
	\centering
	\includegraphics[height=.25\textheight,keepaspectratio]{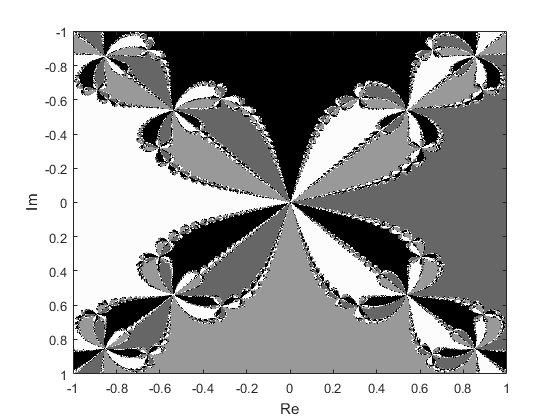}\\
	\includegraphics[height=.25\textheight,keepaspectratio]{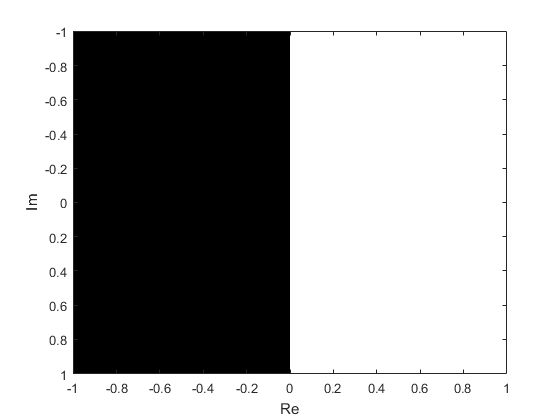}
	\includegraphics[height=.25\textheight,keepaspectratio]{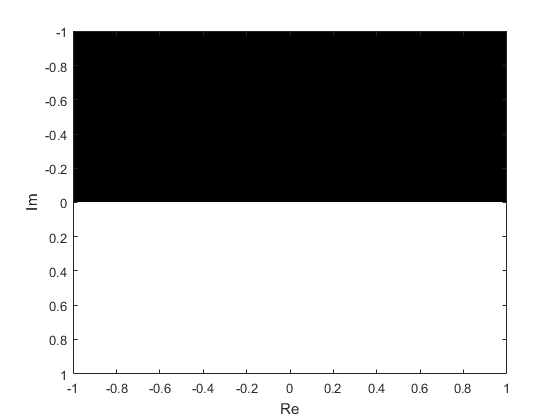}
	\caption{Sensitivity of the limits of the iterates (\ref{Eq: Second Order Iterates}), (\ref{Eq: Third Order Iterates}), and (\ref{Eq: Third Order Iterates Max}) respectively to initial conditions. Regions with the same color denote initial conditions which converge to the same critical point.} 
	\label{Fig: Newton Fractals}
\end{figure}

\bigskip
\noindent{\bf Acknowledgments:} We thank Bachir El Khadir for insightful comments and discussion. We also thank the Editorial Board of \emph{Advances in Mathematics} and an anonymous referee whose careful reading of the manuscript has improved the quality of our presentation.

\bibliographystyle{abbrv}
\bibliography{LM_refs}

\appendix

\end{document}